\theoremstyle{definition}
\theoremstyle{plain}
\DeclareMathOperator{\coeff}{coeff}
\DeclareMathOperator{\diag}{diag}
\DeclareMathOperator{\dom}{dom}
\DeclareMathOperator{\ima}{im}
\DeclareMathOperator{\perm}{perm}
\DeclareMathOperator{\pperm}{pperm}
\DeclareMathOperator{\sgn}{sgn}
\DeclareMathOperator{\SL}{SL}
\DeclareMathOperator{\Tr}{Tr}
\newcommand{\Ah}{\widehat{A}}
\newcommand{\bbar}{\overline{b} }
\newcommand{\Bh}{\widehat{B}}
\newcommand{\bi} {\bm{i}}
\newcommand{\br}{\bm{r}}
\newcommand{\brfact} {\bm{r(i_{a})!}}
\newcommand{\C}{\mathbb{C}}
\newcommand{\calY}{\mathcal{Y}}
\newcommand{\calS}{\mathcal{S}}
\newcommand{\Cg}{\mathfrak{C}_{g}}
\newcommand{\Chat}{\widehat\C}
\newcommand{\Con}[1]{\mathcal{C}_{#1}}
\newcommand{\Consig}[1]{\mathcal{C}^{\sigma}_{#1}}
\newcommand{\D}{\mathcal{D}}
\newcommand{\F}{\mathcal{F}}
\newcommand{\half}{\frac{1}{2}}
\newcommand{\I}{\mathcal{I}}
\newcommand{\Ip}{\I_{+}}
\newcommand{\im}{\textup{i}}
\newcommand{\M}{\mathcal{M}}
\newcommand{\om}[2]{{\omega}_{{#2}}^{(#1)}}
\newcommand{\Omo}{\Omega_{0}(\Gamma)}
\newcommand{\R}{\mathbb{R}}
\newcommand{\Schg}{\mathfrak{S}_{g}}
\newcommand{\Sg}{\mathcal{S}_{g}}
\newcommand{\tpi}{2\pi\im}
\newcommand{\Uh}{\widehat{U}}
\newcommand{\Vh}{\widehat{V}}
\newcommand{\vac}{\mathbbm{1}}
\newcommand{\wt}{\textup{wt}}
\newcommand{\Z}{\mathbb{Z}}
\newtheorem{theorem}{Theorem}[section]
\newtheorem{proposition}[theorem]{Proposition}
\newtheorem{corollary}[theorem]{Corollary}
\newtheorem{lemma}[theorem]{Lemma}
\newtheorem{remark}[theorem]{Remark}
\newtheorem{example}[theorem]{Example}
\begin{document}
\title{The Heisenberg Generalized Vertex Operator Algebra on a Riemann Surface}
\author{
 Michael P. Tuite}
\address{School of Mathematics,  
Statistics and Applied Mathematics, 
National University of Ireland Galway,
University Road, Galway, Ireland.
}
\maketitle

\begin{abstract} 
We compute the partition and correlation generating functions for the Heisenberg
intertwiner generalized vertex operator algebra on a genus $g$ Riemann surface in
the Schottky uniformization. These are expressed in terms of differential forms of the
first, second and third kind, the prime form and the period matrix and are computed by combinatorial methods using a generalization of the MacMahon Master Theorem.
\end{abstract}

\section{Introduction} 
The Heisenberg intertwiner generalized Vertex Operator Algebra (VOA) is an algebra
formed from the Heisenberg VOA and  all of its modules \cite{DL, BK, TZ}. 
We consider the partition and all correlation functions for this theory on a genus $g$ Riemann surface in the Schottky parametrization. 
The partition and $n$-point correlation functions for the Heisenberg and lattice VOAs are familiar concepts at genus one and have been found on genus two surfaces formed from sewn tori \cite{MT1, MT2}. 
Here we describe the more general situation of the Heisenberg generalized VOA and compute the partition function and the generating function for all correlation functions on a genus $g$ Riemann surface.
Our results imply that we can compute all genus $g$ correlation functions for all VOAs or Super VOAs which can be decomposed into Heisenberg modules at any rank e.g. integral lattice (Super)VOAs. 
Various specializations of our results have been long anticipated in physics e.g. \cite{Mo, DV}. 
We also show that the rank 2 Heisenberg VOA genus $g$ partition function is an inverse determinant given by the Motonen-Zograf formula \cite{Mo, Z, McIT}.
All of our results are found by combinatorial methods based on a generalization of the MacMahon Master Theorem (MMT) \cite{McM, T} which is reviewed in Section~2. Section~3 reviews some Riemann surface theory and the Schottky uniformization of a genus $g$ surface in a non-standard parameterization suitable for our purposes. We also give detailed formulas for the bidifferential form of the second kind, holomorphic 1-forms (differentials of the first kind) and the period matrix in terms of genus zero data and Schottky sewing parameters \cite{Y, MT1}. 
Section~4 describes the genus $g$ partition and correlation generating function for the Heisenberg VOA. For convenience we consider the rank 2 case wherein the genus zero correlation generating function is expressed as a permanent. 
The genus $g$ objects can then be expressed as sums over multisets of permanents where the multisets label Heisenberg Fock vectors. 
The MMT implies that the genus $g$ partition function is the inverse of an infinite determinant (similarly to genus two \cite{MT4, MT5}) related to the Motonen-Zograf formula \cite{Mo, Z, McIT}. 
The Heisenberg VOA correlation generating function is expressed in terms of bidifferential forms. 
Section~5 generalizes all of these results to the case of Heisenberg generalized VOA where the correlation generating function is expressed in terms of differential forms of the first, second and third kind, the prime form and the period matrix. 
We conclude with a few important examples.
\section{Some Generalized MacMahon Master Theorems}
We begin with a review of some generalisations of the classic MacMahon Master Theorem  (MMT) of combinatorics  \cite{McM}  described in our earlier paper \cite{T}. Let $A=(A_{ab})$ be an $n\times n$  formal matrix indexed by $a,b\in\{1,\ldots ,n\}$. The Permanent of $A$ is defined by
\begin{align*}
\perm A:=\sum\limits_{\pi \in  S_{n}} \prod\limits_{i=1}^{n}A_{i\pi (i)},
\end{align*}
where the sum is over permutations $\pi\in  S_{n}$, the symmetric group on $n$ letters. 
Let 
\begin{align*}
\bi:=\{1^{r(1)}2^{r(2)}\ldots i^{r(i)}\ldots n^{r(n)}\},
\end{align*}
denote the multiset of size $N=\sum_{i=1}^{n}r(i)$ formed from the original index set 
$\{1,\ldots ,n\}$ where the index $i$ is repeated $r(i)\ge 0$ times. 
We let $A(\bi,\bi)$ denote the $N\times N$ matrix indexed by the multiset $\bi$ and 
define $\perm A(\bi,\bi)=1$ when $\bi$ is the empty set. Lastly,  we let 
$\br(\bi)!:=\prod_{i=1}^{n} r(i)!$ which is the order of the symmetric label group of $\bi$. Then \cite{McM}
\begin{theorem}[MMT]
	\label{theor:MMT}
\begin{align}
\sum_{\bi}\frac{\perm A(\bi,\bi)  }{\br(\bi)!}\, 
=
\frac{1}{\det (I-A)} ,
\label{eq:MMT}
\end{align}
where the sum is taken over all multisets $\bi$.
\end{theorem}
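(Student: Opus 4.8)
The plan is to establish a variable-weighted refinement of \eqref{eq:MMT} and then specialise. Introducing commuting bookkeeping variables $x_{1},\dots ,x_{n}$ and the diagonal matrix $X=\diag(x_{1},\dots ,x_{n})$, I would first prove
\begin{align*}
\sum_{\bi}\frac{\perm A(\bi,\bi)}{\br(\bi)!}\,\prod_{i=1}^{n}x_{i}^{r(i)}
=\frac{1}{\det (I-XA)},
\end{align*}
where $r(i)$ is the multiplicity of $i$ in the multiset $\bi$; putting every $x_{i}=1$ then recovers \eqref{eq:MMT}. Treating the entries $A_{ab}$ as formal symbols, both sides are well defined as formal power series (in the $A_{ab}$ and the $x_{i}$), so every manipulation below is purely formal and no convergence question arises. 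The role of the $x_{i}$ is to grade the infinite multiset sum, which makes the coefficient comparison rigorous.

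For the right-hand side I would pass to the logarithm and use the standard identity
\begin{align*}
-\log\det (I-XA)=-\Tr\log (I-XA)=\sum_{k\ge 1}\frac{1}{k}\,\Tr\bigl((XA)^{k}\bigr),
\end{align*}
and expand each trace as a sum over closed index sequences,
\begin{align*}
\Tr\bigl((XA)^{k}\bigr)=\sum_{i_{1},\dots ,i_{k}}x_{i_{1}}\cdots x_{i_{k}}\,A_{i_{1}i_{2}}A_{i_{2}i_{3}}\cdots A_{i_{k}i_{1}}.
\end{align*}
In this language $\tfrac1k\Tr((XA)^{k})$ is exactly the generating weight of a single directed $k$-cycle through the index set, the prefactor $\tfrac1k$ accounting for its cyclic symmetry, so that $\frac{1}{\det(I-XA)}=\exp\bigl(\sum_{k\ge1}\tfrac1k\Tr((XA)^{k})\bigr)$ is the exponential of the single-cycle generating function.

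The heart of the argument is to read the left-hand side through the cycle decomposition of permutations. Expanding each $\perm A(\bi,\bi)$ over $S_{N}$ and writing every $\sigma$ as a product of disjoint cycles, each cycle contributes the cyclic product of $A$-entries around the labels it visits together with the matching $x$-monomial, so the whole sum becomes a sum over finite collections of directed cycles. I would then invoke the exponential formula: the generating function for arbitrary multisets of such cycles is the exponential of the generating function for a single cycle, which matches the expression obtained for the right-hand side.

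The main obstacle is the reconciliation of three independent symmetry factors, namely the $\tfrac1k$ from each cycle's rotations, the $\tfrac1{m!}$ produced by the exponential when a configuration repeats a given cycle $m$ times, and the $\tfrac1{\br(\bi)!}$ coming from permuting equal labels of the multiset; one must check that these combine to give precisely equal coefficients on the two sides. A clean way to package this bookkeeping is to apply the symmetric-function identity $h_{m}=\sum_{\lambda\vdash m}z_{\lambda}^{-1}p_{\lambda}$ to the eigenvalues $\mu_{i}$ of $XA$, using $p_{k}(\mu)=\Tr((XA)^{k})$, which yields $\sum_{m}h_{m}=\prod_{i}(1-\mu_{i})^{-1}=\det(I-XA)^{-1}$ and reproduces all the symmetry factors automatically. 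Once this reconciliation is settled, combining the two computations identifies the weighted permanent sum with $\det(I-XA)^{-1}$, and specialising $x_{i}=1$ completes the proof.
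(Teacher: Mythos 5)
Your proposal is correct, but there is nothing in the paper to compare it against: the paper states this theorem without proof, citing it as MacMahon's classical result \cite{McM}, with this and its generalizations proved in \cite{T}. Your route --- grading the multiset sum by variables $x_{i}$, expanding $-\log\det(I-XA)=\sum_{k\ge 1}\tfrac{1}{k}\Tr\bigl((XA)^{k}\bigr)$, and matching this against the cycle decomposition of the permutations appearing in $\perm A(\bi,\bi)$ --- is the standard combinatorial proof, and it is exactly the mechanism underlying the extra parameter $\beta$ ``which counts permutation cycles'' mentioned in the paper's footnote to Theorem~\ref{theor:MMTsub}. One step that you should carry out explicitly rather than asserting that the symmetric-function identity settles it ``automatically'' is the passage from multisets to linear sequences, which is what actually reconciles your three symmetry factors. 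A multiset $\bi$ of size $N$ with multiplicities $r(i)$ corresponds to $N!/\br(\bi)!$ functions $f:\{1,\ldots,N\}\to\{1,\ldots,n\}$, and the permanent is invariant under relabelling of positions, so the size-$N$ part of your left-hand side equals
\begin{align*}
\frac{1}{N!}\sum_{f}\sum_{\sigma\in S_{N}}\prod_{a=1}^{N}A_{f(a)f(\sigma(a))}\,x_{f(a)}.
\end{align*}
Summing over $f$ first, a fixed $\sigma$ of cycle type $\lambda\vdash N$ contributes $\prod_{k}\Tr\bigl((XA)^{k}\bigr)^{m_{k}(\lambda)}$, and there are $N!/z_{\lambda}$ permutations of that type, so the size-$N$ part equals $\sum_{\lambda\vdash N}z_{\lambda}^{-1}p_{\lambda}$ with $p_{k}=\Tr\bigl((XA)^{k}\bigr)$, i.e.\ $h_{N}$ of the eigenvalues of $XA$; summing over $N$ gives $\det(I-XA)^{-1}$. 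Since $z_{\lambda}=\prod_{k}k^{m_{k}}m_{k}!$ already packages both the rotation factor $1/k$ per cycle and the factor $1/m_{k}!$ for repeated cycles, this single orbit count replaces the appeal to the exponential formula and makes your coefficient comparison rigorous; with that inserted, the argument is complete.
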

We review several generalizations of this result \cite{T}. 
Consider an $(n^{\prime }+n)\times (n^{\prime }+n)$
matrix with block structure 
\begin{align}
\begin{bmatrix}
B & U \\ 
V & A
\end{bmatrix},
\label{eq:Block}
\end{align}%
where $A=(A_{ij})$ is an $n\times n$ matrix indexed by $i,j$, $B=(B_{i^{\prime }j^{\prime }})$ is an $n^{\prime }\times n^{\prime }$
matrix indexed by $i^{\prime },j^{\prime }$, $U=(U_{i^{\prime }j})$ is an $n^{\prime }\times n$ matrix
and $V=(V_{ij^{\prime }})$ is an $n\times n^{\prime }$ matrix. For a
multiset $\bi$ of size $N$ define 
the $(n^{\prime }+N)\times (n^{\prime }+N)$ matrix 
\begin{align}
\begin{bmatrix}
B & U(\bi) \\ 
V(\bi) & A(\bi,\bi)
\end{bmatrix},
\label{eq:BAk}
\end{align}%
where, as before, $A(\bi,\bi)$ denotes the $N\times N$ matrix indexed by 
$\bi$, $U(\bi)$ is an $n^{\prime }\times N$ matrix and $V(\bi)$ is an 
$N\times n^{\prime }$ matrix. 
We  find\footnote{In  \cite{T}, a further parameter $\beta$ which counts permutation cycles is also discussed. We take $\beta=1$ throughout the present paper.}  \cite{T}
\begin{theorem}[The Submatrix MMT]
	\label{theor:MMTsub} 
	\begin{align}
	\sum_{\bi}\frac{1}{\br(\bi)!}\,\perm
	\begin{bmatrix}
	B & U(\bi) \\ 
	V(\bi) & A(\bi,\bi)
	\end{bmatrix}
	= \frac{\perm\widetilde{B}}{\det (I-A)},
	\notag
	\end{align}%
	for $n'\times n'$ matrix  $
	\widetilde{B}=B+U(I-A)^{-1}V$ where $(I-A)^{-1}=\sum_{k\ge 0}A^{k}$.
\end{theorem}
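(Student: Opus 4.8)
The plan is to reduce the Submatrix MMT to the classical MMT (Theorem~\ref{theor:MMT}) by decomposing each permutation appearing in the permanent of the block matrix \eqref{eq:BAk}, call it $M(\bi)$, according to how it interacts with the $n'$ indices of the $B$-block. I would label the combined index set as $I'\sqcup\bi$, where $I'=\{1,\ldots,n'\}$ carries the $B$-rows and columns and $\bi$ carries the $N$ labelled $A$-positions. First I would follow the orbit of each $i'\in I'$ under a permutation $\pi$: starting from $i'$, apply $\pi$ repeatedly until the orbit first returns to $I'$, at some $j'$. This produces a well-defined first-return permutation $\sigma\in S_{n'}$ with $\sigma(i')=j'$, together with, for each $i'$, a return segment $i'\to l_{1}\to\cdots\to l_{k}\to\sigma(i')$ passing through positions $l_{1},\ldots,l_{k}\in\bi$ (with $k=0$ when $\pi(i')\in I'$). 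The positions of $\bi$ not lying on any return segment are exactly the positions whose $\pi$-orbit avoids $I'$; they are permuted among themselves by $\pi$, giving a ``pure $A$-part'', and one checks that this assignment $\pi\mapsto(\sigma,\text{segments},\text{pure part})$ is a bijection.

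Next I would record the resulting factorization of the weight $\prod_{p}M(\bi)_{p,\pi(p)}$. A return segment of length $k$ from $i'$ to $\sigma(i')$ through values $v_{1},\ldots,v_{k}$ contributes $U_{i'v_{1}}A_{v_{1}v_{2}}\cdots A_{v_{k-1}v_{k}}V_{v_{k}\sigma(i')}$, while the empty segment contributes $B_{i'\sigma(i')}$; the pure part contributes a product of $A$-entries which, summed over all permutations of its positions, gives a permanent $\perm A(\bi'',\bi'')$ for the sub-multiset $\bi''$ carried by those positions. Summing the segment weights over all lengths $k\ge 0$ and all intermediate values yields exactly
\begin{align*}
B_{i'\sigma(i')}+\sum_{k\ge 1}\bigl(UA^{k-1}V\bigr)_{i'\sigma(i')}=\widetilde{B}_{i'\sigma(i')},
\end{align*}
using $(I-A)^{-1}=\sum_{k\ge 0}A^{k}$. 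Thus, once the sums are correctly organized, the segment data should assemble into $\sum_{\sigma\in S_{n'}}\prod_{i'}\widetilde{B}_{i'\sigma(i')}=\perm\widetilde{B}$, and the pure part into $1/\det(I-A)$ by Theorem~\ref{theor:MMT}.

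The hard part is the bookkeeping of the symmetry factor $\br(\bi)!$ when distributing the labelled positions of $\bi$ between the return segments and the pure part, since $\br(\bi)!$ depends on the total multiplicities $r(i)=s(i)+r''(i)$ of each value across both parts and does not split off in any obvious way. I would resolve this by summing instead over value-level data (the permutation $\sigma$, the value-sequences of the segments, with value-multiplicities $s(i)$, and the pure multiset $\bi''$, with multiplicities $r''(i)$), and counting how many position-level permutations in $\perm M(\bi)$ reduce to each: assigning distinct positions of value $i$ to the $s(i)$ ordered segment slots requiring that value can be done in $r(i)!/r''(i)!$ ways. The key cancellation is then
\begin{align*}
\frac{1}{\br(\bi)!}\prod_{i=1}^{n}\frac{r(i)!}{r''(i)!}=\prod_{i=1}^{n}\frac{1}{r''(i)!}=\frac{1}{\br(\bi'')!},
\end{align*}
which removes all dependence on the segment multiplicities and weights the pure part by exactly $1/\br(\bi'')!$. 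After this cancellation the sum factorizes as the product of the segment sum $\perm\widetilde{B}$ and the MMT sum $\sum_{\bi''}\perm A(\bi'',\bi'')/\br(\bi'')!=1/\det(I-A)$, which completes the proof.
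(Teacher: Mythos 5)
Your proof is correct. Note first that the paper does not actually prove Theorem~\ref{theor:MMTsub} in the text --- it is quoted from \cite{T} --- so the comparison is with that reference; your first-return decomposition (cutting each cycle of $\pi$ at its visits to the primed indices, so that a permutation of $\{1',\ldots,n'\}\sqcup\bi$ becomes a permutation $\sigma\in S_{n'}$, a family of $UA^{k-1}V$-weighted segments, and a pure $A$-permutation) is the standard combinatorial route and matches the path/cycle decomposition underlying \cite{T}. You also correctly isolated and resolved the one genuinely delicate point: the count $r(i)!/r''(i)!$ of ordered injective assignments of the value-$i$ positions to segment slots, and the cancellation $\frac{1}{\br(\bi)!}\prod_{i}\frac{r(i)!}{r''(i)!}=\frac{1}{\br(\bi'')!}$, are exactly what make the sum factorize into $\perm\widetilde{B}$ times the classical MMT sum. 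Two minor remarks. First, the identity is one of formal power series in the entries of $A,B,U,V$, so the resummation of segment lengths into $(I-A)^{-1}=\sum_{k\ge 0}A^{k}$ and the regrouping of infinite sums are legitimate degree by degree; this is worth stating. Second, there is a shortcut avoiding all position-level bookkeeping: apply Theorem~\ref{theor:MMT} to the block matrix with the primed rows scaled by marking variables $t_{1},\ldots,t_{n'}$, extract the coefficient of $t_{1}\cdots t_{n'}$ (which selects multisets in which each primed index occurs exactly once), and use the Schur complement factorization $\det\bigl(I-\bigl[\begin{smallmatrix} TB & TU \\ V & A \end{smallmatrix}\bigr]\bigr)=\det(I-A)\,\det(I-T\widetilde{B})$ with $T=\diag(t_{i'})$, followed by a second application of the MMT to $T\widetilde{B}$; your bijective argument proves the same identity from first principles.
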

We may extend the  sum over permutations in \eqref{eq:MMT} to a sum over partial permutations i.e.  injective partial mappings from $\{1,\ldots ,n\}$ to itself.  
Let $\Psi$ denote the set of partial permutations of the set  $\{1,\ldots ,n\}$ and let $\dom \psi$ and $\ima \psi$ denote the domain and image respectively of $\psi\in\Psi$. 
Let $\theta=(\theta_{i}),\phi=(\phi_{i})$ be formal $n$-vectors. 
We define the $(\theta,\phi)$-extended Partial Permanent of an $n\times n$ matrix $A$ by  \cite{T}
\begin{align}
\pperm_{\,\theta,\phi} A:= \sum\limits_{\psi\in\Psi }
\prod\limits_{i \in \dom \psi} A_{i\psi(i)}
\prod\limits_{j \not\in \, \ima \psi} \theta_{j}
\prod\limits_{k \not\in \, \dom \psi} \phi_{k}
,
\label{eq:pperm}
\end{align}
Thus 
\begin{align*}
\pperm_{\,\theta,\phi}  
\begin{bmatrix}
A_{11} & A_{12}\\
A_{21} & A_{22}	
\end{bmatrix}
=&\theta_{1}\phi_{1}\theta_{2}\phi_{2}+A_{11}\theta_{2}\phi_{2}+A_{22}\theta_{1}\phi_{1}
\\
&+A_{12}\theta_{1}\phi_{2}+A_{21}\theta_{2}\phi_{1}+A_{11}A_{22}+A_{12}A_{21}.
\end{align*} 
Let $A(\bi,\bi)$ denote the $N\times N$ matrix indexed by $\bi$. 
We also let  $\pperm_{\,\theta,\phi}A(\bi,\bi)$ denote the corresponding partial permanent with dimension $N$ row vectors  $(\ldots,\theta_{i},\ldots)$ and $(\ldots,\phi_{i},\ldots )$ where index $i$ occurs $r(i)$ times in $\bi$. We then find  \cite{T}
\begin{theorem}[The Partial Permutation MMT]
	\label{theor:MMTpperm} 
	\begin{align*}
\sum_{\bi}\frac{\pperm_{\,\theta\phi } A(\bi,\bi)  }{\br(\bi)!}\, 
	=		\frac{e^{\theta(I-A)^{-1}\phi^{T}}}{\det (I-A)},
	\end{align*}%
	where $\phi^{T}$ denotes the transpose of the row vector $\phi$.
\end{theorem}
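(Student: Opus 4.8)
The plan is to deduce this from the Submatrix MMT (Theorem~\ref{theor:MMTsub}) by introducing an auxiliary $n'\times n'$ block that absorbs the unmatched rows and columns of a partial permutation, and then summing over the block size $n'$ to manufacture the exponential. The starting observation is that, since any $\psi\in\Psi$ restricts to a bijection of $\dom\psi$ onto $\ima\psi$, the number of row indices outside $\dom\psi$ equals the number of column indices outside $\ima\psi$; call this common number $n'$. Hence for each multiset $\bi$ we may split $\pperm_{\,\theta,\phi}A(\bi,\bi)=\sum_{n'\ge 0}\pperm^{[n']}_{\,\theta,\phi}A(\bi,\bi)$, where $\pperm^{[n']}$ collects precisely those $\psi$ with exactly $n'$ unmatched rows (equivalently $n'$ unmatched columns).

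For each fixed $n'$ I would apply Theorem~\ref{theor:MMTsub} with $B=0$ the $n'\times n'$ zero matrix, with $U$ the $n'\times n$ matrix whose every row equals $\theta$ (so $U_{i'j}=\theta_{j}$), and with $V$ the $n\times n'$ matrix whose every column equals $\phi^{T}$ (so $V_{jj'}=\phi_{j}$). Then every entry of $\widetilde B=U(I-A)^{-1}V$ equals the single scalar $s:=\theta(I-A)^{-1}\phi^{T}$, so $\widetilde B=s\,J$ with $J$ the all-ones matrix, whence $\perm\widetilde B=n'!\,s^{n'}$ and the right-hand side of Theorem~\ref{theor:MMTsub} becomes $n'!\,s^{n'}/\det(I-A)$.

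The crux is to identify the bordered permanent $\perm\begin{bmatrix}0 & U(\bi)\\ V(\bi)& A(\bi,\bi)\end{bmatrix}$ with $(n'!)^{2}\,\pperm^{[n']}_{\,\theta,\phi}A(\bi,\bi)$. Because the top-left block vanishes, any permutation contributing to this permanent must send each of the $n'$ auxiliary rows to a distinct multiset column and must have each of the $n'$ auxiliary columns filled from a distinct multiset row; the residual multiset-row-to-multiset-column assignment is exactly a partial permutation $\psi$ whose $n'$ unmatched columns (weighted by the relevant $\theta_{j}$, via $U(\bi)$) are those claimed by the auxiliary rows and whose $n'$ unmatched rows (weighted by $\phi_{j}$, via $V(\bi)$) are those feeding the auxiliary columns. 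Since $U_{i'j}$ is independent of $i'$ and $V_{jj'}$ is independent of $j'$, each such $\psi$ is produced by exactly $n'!$ bijections of the auxiliary rows onto its unmatched columns, times $n'!$ bijections of the auxiliary columns onto its unmatched rows, all carrying the same monomial weight. This yields the factor $(n'!)^{2}$. I expect this bookkeeping to be the main obstacle: one must check that, for a fixed $\bi$, the distinguishable auxiliary slots contribute only a pure symmetry factor, with the repeated labels of $\bi$ treated as distinct throughout and their indistinguishability deferred entirely to the overall $1/\br(\bi)!$ that appears identically on both sides.

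Combining the three computations, Theorem~\ref{theor:MMTsub} gives $(n'!)^{2}\sum_{\bi}\frac{1}{\br(\bi)!}\pperm^{[n']}_{\,\theta,\phi}A(\bi,\bi)=n'!\,s^{n'}/\det(I-A)$, that is, $\sum_{\bi}\frac{1}{\br(\bi)!}\pperm^{[n']}_{\,\theta,\phi}A(\bi,\bi)=s^{n'}/\bigl(n'!\,\det(I-A)\bigr)$. Finally I would sum over $n'\ge 0$ and use $\sum_{n'\ge 0}s^{n'}/n'!=e^{s}$ together with the splitting of the first paragraph to obtain $\sum_{\bi}\frac{1}{\br(\bi)!}\pperm_{\,\theta,\phi}A(\bi,\bi)=e^{s}/\det(I-A)$, which is the assertion. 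As a consistency check, setting $\theta=\phi=0$ forces $n'=0$ and recovers the ordinary MMT of Theorem~\ref{theor:MMT}.
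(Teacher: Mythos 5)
Your derivation is correct, but it cannot be compared line-by-line with the paper's own proof for the simple reason that the paper gives none: Theorem~\ref{theor:MMTpperm} appears in a review section and is quoted from \cite{T}, where it is established (in the more general $\beta$-extended form, with $\beta$ counting permutation cycles; the present paper fixes $\beta=1$ throughout) by direct combinatorial arguments rather than by reduction. What you do instead is deduce Theorem~\ref{theor:MMTpperm} from Theorem~\ref{theor:MMTsub} by a bordering trick, and every step checks out: with $B=0$, $U_{i'j}=\theta_{j}$ and $V_{jj'}=\phi_{j}$ one indeed gets $\widetilde{B}=sJ$ for $s=\theta(I-A)^{-1}\phi^{T}$, whence $\perm\widetilde{B}=n'!\,s^{n'}$; the key identification $\perm\bigl[\begin{smallmatrix}0&U(\bi)\\ V(\bi)&A(\bi,\bi)\end{smallmatrix}\bigr]=(n'!)^{2}\,\pperm^{[n']}_{\,\theta,\phi}A(\bi,\bi)$ is sound, because the vanishing top-left block forces every contributing permutation to decompose as a partial permutation $\psi$ of $\bi$ with exactly $n'$ unmatched rows and $n'$ unmatched columns together with a pair of bijections gluing the auxiliary rows onto the columns outside $\ima\psi$ (weight $\theta_{j}$, matching \eqref{eq:pperm}) and the rows outside $\dom\psi$ onto the auxiliary columns (weight $\phi_{k}$), and all $(n'!)^{2}$ such gluings carry identical weight precisely because $U_{i'j}$ is independent of $i'$ and $V_{jj'}$ of $j'$ (for $N<n'$ both sides vanish, consistently); finally $\sum_{n'\ge 0}s^{n'}/n'!=e^{s}$ assembles the exponential, with the $1/\br(\bi)!$ common to both sides as you note. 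As for what each route buys: yours shows that the four MMT variants of Section~2 are not logically independent --- Theorem~\ref{theor:MMTsub} implies Theorem~\ref{theor:MMTpperm}, just as both follow from Theorem~\ref{theor:MMTsubpperm} by specialization ($n'=0$, respectively $\theta=\phi=0$) --- and it renders the exponential transparent as the symmetry factor $1/n'!$ of an unordered collection of open chains; the direct proof in \cite{T} is self-contained and yields the stronger $\beta$-extension, which your argument intrinsically cannot, since threading an open chain of $\psi$ through the auxiliary border converts it into a cycle of the bordered matrix, so the cycle count of the bordered permanent does not refine to that of $\psi$. Since the paper explicitly takes $\beta=1$, that loss is immaterial here, and your proof is a legitimate self-contained alternative within the paper's own toolkit.
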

Lastly, we may combine the two generalizations above into one theorem concerning partial permutations of submatrices of the $(n'+n)\times (n'+n)$ block matrix  \eqref{eq:Block}. 
Let $\theta'=(\theta'_{i'})$ and $\phi'=(\phi'_{i'})$ 
be $n'$-vectors and $\theta=(\theta_{i})$ and $\phi=(\phi_{i})$ be $n$-vectors. 
For a multiset $\bi$ of size $N$ and block matrix \eqref{eq:BAk} labelled by 
$\{1',\ldots,n'\}$ and $\bi$, we let 
$\pperm_{\,\Theta,\Phi}
\begin{bmatrix}
B & U(\bi) \\ 
V(\bi) & A(\bi,\bi)
\end{bmatrix} $ 
denote the $(\Theta,\Phi)$-extended partial permanent with $(n'+N)$-vectors 
$\Theta:=(\theta'_{1'},\ldots,\theta'_{n'},\ldots,\theta_{i},\ldots)$ and 
$\Phi:=(\phi'_{1'},\ldots,\phi'_{n'},\ldots,\phi_{i},\ldots)$ respectively. 
We find  \cite{T}
\begin{theorem}[The Submatrix Partial Permutation MMT]
	\label{theor:MMTsubpperm} 
	\begin{align}
	\sum_{\bi}\frac{1}{\br(\bi)!}\, \pperm_{\,\Theta,\Phi}
	\begin{bmatrix}
	B & U(\bi) \\ 
	V(\bi) & A(\bi,\bi)
	\end{bmatrix}  
	=
			\frac{e^{\theta(I-A)^{-1}\phi^{T}}}{\det (I-A)}	\pperm_{\,\widetilde{\theta},\widetilde{\phi}}\widetilde{B},
	\label{eq:GenMMT}
	\end{align}%
 for $n'\times n'$ matrix  $\widetilde{B}= B+U(I-A)^{-1}V$ and $n'$-vectors $\widetilde{\theta} $ and $\widetilde{\phi}$ given by
	\begin{align*}
	\widetilde{\theta} &= \theta'+\theta(I-A)^{-1}V,\quad 
	\widetilde{\phi}^{T} = \phi'^{T}+U(I-A)^{-1}\phi^{T}.
	\end{align*}
\end{theorem}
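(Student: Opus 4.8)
The plan is to prove the identity by a direct combinatorial analysis of the partial permutations appearing on the left-hand side, reducing the sum over the multiset-indexed block to the mechanisms already established in Theorems~\ref{theor:MMT}, \ref{theor:MMTsub} and \ref{theor:MMTpperm}. The starting observation is that every partial permutation $\psi$ of the index set $\{1',\ldots,n'\}\sqcup\bi$ decomposes uniquely into a disjoint union of \emph{cycles} and \emph{open paths}, where a node lying in $\dom\psi$ but not $\ima\psi$ carries the weight $\theta$ (a path source), a node in $\ima\psi$ but not $\dom\psi$ carries $\phi$ (a path sink), and a node in neither is an isolated source--sink pair. Each edge of a cycle or path is a single entry of the block matrix \eqref{eq:BAk}, so each cycle or path is a word alternating between steps inside the $A$-block, steps inside the $B$-block, and transitions through $U$ and $V$.

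First I would resum every maximal excursion into the $\bi$-block. A run of consecutive steps through $\bi$ of length $\ell$ contributes a factor $A^{\ell}$, and summing over all $\ell\ge 0$ together with the multiset labels produces the geometric series $(I-A)^{-1}=\sum_{k\ge 0}A^{k}$. Carrying this out for the four contexts in which an excursion can occur yields exactly the four ingredients of the statement: cycles lying entirely inside $\bi$ sum, via Theorem~\ref{theor:MMT}, to the factor $1/\det(I-A)$; open paths lying entirely inside $\bi$ sum, via Theorem~\ref{theor:MMTpperm}, to $e^{\theta(I-A)^{-1}\phi^{T}}$; an excursion that leaves a primed node through $U$, traverses $\bi$, and re-enters a primed node through $V$ renormalizes the effective $B$-edge to $B+U(I-A)^{-1}V=\widetilde B$; and an excursion that terminates (resp.\ originates) inside $\bi$ after leaving (resp.\ before entering) the primed block renormalizes the path endpoints to $\widetilde{\phi}^{T}=\phi'^{T}+U(I-A)^{-1}\phi^{T}$ and $\widetilde{\theta}=\theta'+\theta(I-A)^{-1}V$.

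After this resummation what remains is precisely a sum over partial permutations of the $n'$ primed indices alone, with edge weights $\widetilde B$, source weights $\widetilde{\theta}$ and sink weights $\widetilde{\phi}$, which is by definition $\pperm_{\,\widetilde{\theta},\widetilde{\phi}}\widetilde B$; since the $\bi$-only cycles and paths factor off as the scalar $e^{\theta(I-A)^{-1}\phi^{T}}/\det(I-A)$, this gives \eqref{eq:GenMMT}. The two previously stated generalizations are recovered as the special cases $n'=0$ (Theorem~\ref{theor:MMTpperm}) and $\theta=\phi=\theta'=\phi'=0$ (Theorem~\ref{theor:MMTsub}), which is a useful consistency check on the bookkeeping.

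The step I expect to be the main obstacle is controlling the symmetry factor $1/\br(\bi)!$ during the resummation of excursions through $\bi$. Because the indices in $\bi$ are repeated, naively summing a path's internal nodes over the multiset overcounts configurations by the order of the symmetric label group, and one must verify that the weight $1/\br(\bi)!$ cancels this overcounting exactly, so that each excursion contributes a clean matrix power $A^{\ell}$ rather than one decorated by stray multiplicities. This is the same delicate cancellation that underlies Theorem~\ref{theor:MMT} itself, and the work lies in checking that it persists uniformly when cycles, interior path-segments, and the two types of $U,V$ transitions are all present simultaneously; once this is established, the exponential in the path sector follows in the standard way from the freedom to include arbitrarily many disjoint open paths.
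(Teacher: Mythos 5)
Note first that the paper does not actually prove Theorem~\ref{theor:MMTsubpperm}: it is stated as a review result, with the proof delegated to \cite{T}, so there is no in-paper argument to compare against line by line. Judged on its own terms, your sketch is correct and is essentially the combinatorial argument underlying the cited source: the paper's footnote about a parameter $\beta$ counting permutation cycles signals that \cite{T} works with precisely the cycle/open-path decomposition of injective partial maps you describe, with cycles confined to $\bi$ exponentiating to $\det(I-A)^{-1}$ via $\sum_{k\ge 1}\frac{1}{k}\Tr A^{k}$, open paths confined to $\bi$ producing $e^{\theta(I-A)^{-1}\phi^{T}}$, and excursions through the $\bi$-block dressing $B,\theta',\phi'$ to $\widetilde{B},\widetilde{\theta},\widetilde{\phi}$. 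Your consistency checks are also right: at $n'=0$ one recovers Theorem~\ref{theor:MMTpperm}, and with all weight vectors zero only full permutations survive in \eqref{eq:pperm}, recovering Theorem~\ref{theor:MMTsub}. The one step you flag as delicate, the $1/\br(\bi)!$ bookkeeping, does go through and is worth making explicit rather than leaving as a worry: a given configuration splits the multiset as $\bi=\bi_{1}\uplus\bi_{2}$ according to which occurrences are used by structures touching the primed block and which are not, and since the number of ways to merge labeled occurrences is the multinomial $\prod_{i}\binom{r(i)}{r_{1}(i)}=\br(\bi)!/(\br(\bi_{1})!\,\br(\bi_{2})!)$, the weight $1/\br(\bi)!$ factorizes the double sum into independent sums over $\bi_{1}$ and $\bi_{2}$; the exponential formula then applies separately to the $\bi$-only sector (cycles and paths), while the primed nodes, each used exactly once, assemble into $\pperm_{\,\widetilde{\theta},\widetilde{\phi}}\widetilde{B}$ — including the cross terms such as $\bigl(\theta(I-A)^{-1}V\bigr)_{j'}\bigl(U(I-A)^{-1}\phi^{T}\bigr)_{j'}$ arising from an effective isolated primed node that is internal to an original path with both endpoints in $\bi$. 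With that factorization spelled out, your outline is a complete and valid proof strategy.
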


\section{Riemann Surfaces from a Sewn Sphere }
\subsection{Some standard forms on a Riemann surface}
Define the  indexing sets
\begin{align*}
\I=\{-1,\ldots ,- g,1,\ldots ,g\},\quad
\Ip=\{1,\ldots ,g\}.
\end{align*}
Let $\Sg$  be a compact genus $g$ Riemann surface with canonical
homology basis $\alpha_{a}, \beta_{a}$ for $a\in \Ip$. 
There exists   a unique symmetric bidifferential form  of the second kind 
\cite{Mu,F}
\begin{align}
\omega(x,y)=\left(\frac{1}{(x-y)^{2}}+\text{regular terms}\right)\, dxdy,
\label{eq:omegag}
\end{align}%
for local coordinates $x,y$ with  normalization
$
\oint_{\alpha_{a}}\omega(x,\cdot )=0$ for all $a\in \Ip$.
It follows that
\begin{align}
\nu_{a}(x)=\oint\limits_{\beta_{a}}\omega(x,\cdot ),  \quad (a\in \Ip),
\label{eq:nu}
\end{align}%
is a differential of the first kind, a holomorphic 1-form normalized by
$\oint\limits_{\alpha_{a}}\nu_{b}=\tpi\, \delta_{ab}$. 
The  period
matrix $\Omega$ is defined by 
\begin{align}
\Omega_{ab}=\frac{1}{\tpi}\oint\limits_{\beta_{a}}\nu_{b}\quad \quad (a,b\in \Ip).
\label{eq:period}
\end{align}%
We also define the differential of the third kind for $p,q\in \Sg$ by
\begin{align}
\omega_{p-q}(x)=\int_{q}^{p}\omega(x,\cdot ).
\label{eq:omp2p1}
\end{align}
$\omega(x,y)$ can  be
expressed in terms of the prime form $E(x,y)=
K(x,y)dx^{-1/2}dy^{-1/2}$, a holomorphic form of weight
$(-\frac{1}{2},-\frac{1}{2})$ with  
\begin{align}
\omega (x,y) = &\partial_{x}\partial_{y}\left(\log K(x,y)\right)dxdy,
\label{eq:prime} 
\end{align}%
where $K(x,y)=(x-y)+O\left((x-y)^{2}\right)$ and  $K(x,y)=-K(y,x)$.
For the genus zero Riemann sphere $\calS_{0}\cong\Chat:=\C\cup\{\infty\}$ with $x,y,p,q\in \widehat\C$ we have
\begin{align}
\om{0}{}(x,y)= &\frac{dxdy,}{(x-y)^2}, \quad
\om{0}{p-q}=  \frac{dx}{x-p}-\frac{dx}{x-q} , \quad
K^{(0)}(x,y)= x-y. \label{eq:omK0}
\end{align}

\subsection{The Schottky uniformization of a Riemann surface}
We briefly review the  construction of a genus $g$ Riemann surface $\Sg$ using the Schottky uniformization where we sew $g$ handles to the Riemann sphere $\Chat$ e.g.  \cite{Fo,Bo}. 
For each $a\in\I$, let $\Con{a}\in \C$ be a circular contour with center $w_{a}$ and radius $|\rho_{a}|^{1/2}$ for some complex parameters $w_{a},\rho_{a}$. We assume that  $\rho_{a}=\rho_{-a}$ and that  $\Con{a}\cap \Con{b}=\{ \;\}$ for $a\neq b$.
Identify $z'\in \Con{-a}$ with $z\in  \Con{a}$ for each $a\in\Ip$ via the sewing relation  \cite{TW}
\begin{align}
\label{eq:sew}
(z'-w_{-a})(z-w_{a})=\rho_{a},\quad a\in\Ip.
\end{align}
Define $\gamma_{a}\in \SL_{2}(\C)$ by the  M\"obius map 
\begin{align}
\label{eq:gamaz}
\gamma_{a}z: = w_{-a}+\frac{\rho_{a}}{z-w_{a}}, \quad a\in\I.
\end{align} 
Notice that $\gamma_{-a}=\gamma_{a}^{-1}$. 
The sewing relation \eqref{eq:sew}  implies $z'=\gamma_{a}z$  for $a\in\Ip$  so that  $\gamma_{a} \Con{a}= -\Con{-a}$ for all $a\in\I$. 
\eqref{eq:sew} is equivalent to the standard Schottky  relation  \cite{Fo, Bo}
\begin{align}
\frac{z'-W_{-a}}{z'-W_{a}}\,\frac{z-W_{a}}{z-W_{-a}}=q_{a},\quad a\in\Ip,
\label{eq:Schottkysew}
\end{align}
where
$q_a=c\left(-\frac{\rho_a}{(w_a-w_{-a})^2}\right)$ 
for\footnote{$c(x)=\sum_{n\ge 1}\frac{1}{n} \binom{2n}{n+1}x^{n}$ is the Catalan series  \cite{MT1}.} 
$	c(x)=\frac{1-\sqrt{1-4x}}{2x}-1$ and $W_{a}= \frac{w_{a}+q_a w_{-a}}{1+q_a}$.  
Each $\gamma\in\Gamma$ is conjugate in $\SL_{2}(\C)$ to $\diag(q_{\gamma}^{1/2},q_{\gamma}^{-1/2})$ with   $|q_{\gamma}|<1$ where $q_{\gamma}$ is called the multiplier of $\gamma$.
In particular, $q_{\gamma_{a}}=q_{a}$ with attracting (repelling) fixed point $W_{-a}$  ($W_{a}$) for $a\in\Ip$. 

The (marked) Schottky group $\Gamma $  is the free group generated by $\gamma_{a}$. 
Every element of $\Gamma$ may be expressed as a reduced word $\gamma_{a_{1}}\ldots \gamma_{a_{n}}$ of length $n$ where $\gamma_{-a_{i}}\neq \gamma_{a_{i+1}}$. 
Let $\Lambda(\Gamma)$ denote the  limit set and let $\Omo=\Chat-\Lambda(\Gamma)$.
 Then  $\Sg\simeq \Omo/\Gamma$ is a Riemann surface of genus $g$.
We let $\D\subset\Chat$  denote the standard connected fundamental region with oriented boundary curves $\Con{a}$. 
We define the space of Schottky parameters  $\Cg \subset \C^{3g}$ by
\begin{align}
\label{eq:Cfrakg}
\Cg:=\left\{ (\bm{w,\rho}): |w_{a}-w_{b}|>|\rho_{a}|^{\frac{1}{2}}+|\rho_{b}|^{\frac{1}{2}}\; \forall \;a\neq b\right\},
\end{align}
for $\bm{w,\rho}:=w_{1},w_{-1},\rho_{1},\ldots,w_{g},w_{-g},\rho_{g}$. 
We define Schottky space as $\Schg:=\Cg/\SL_{2}(\C)$ for the M\"obius $\SL_{2}(\C)$ group. 

\subsection{Some Schottky scheme sewing formulas}
In this section we  generalize a construction due to Yamada  \cite{Y} and developed further in  \cite{MT1}
for sewing Riemann surfaces. 
In particular, we describe formulas for the genus $g$  normalized bidifferential of the
second kind  $\omega$, holomorphic 1-forms $\nu_{a}$
and the period matrix $\Omega_{ab}$ for $a,b\in\Ip$ constructed from  $\om{0}{}(x,y)$ and $\om{0}{p-q}$ of  \eqref{eq:omK0}  in the above Schottky sewing formalism. 
We note that there are classical formulas for these objects in terms of Poincar\'e sums that can be derived from the sewing formulas e.g. see \eqref{eq:omgSch}. However, the sewing expansions described here are much more suitable for our later purposes.
\begin{lemma}
\label{lem:inteqn2} 
\begin{align}
\omega (x,y)=\om{0}{}(x,y)
+\frac{1}{\tpi}\sum_{a\in\I}\,
\oint\limits_{\mathcal{C}_{a}(z_{a})} \int\limits^{z_{a}}\om{0}{}(x,\cdot )\;\omega (y,z_{a}),  \label{eq:w2rhointeqn}
\end{align}%
for $x,y\in \D$ with $z_{a}=z-w_{a}$. 
\end{lemma}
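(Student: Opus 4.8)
The plan is to read \eqref{eq:w2rhointeqn} as a residue identity on the fundamental region $\D$. For fixed $x,y\in\D$ I would introduce the auxiliary meromorphic $1$-form in an integration variable $z$,
\[
\Phi(z):=\Bigl(\int^{z}\om{0}{}(x,\cdot)\Bigr)\,\omega(y,z)=\frac{dx}{x-z}\,\omega(y,z),
\]
using that the indefinite integral of $\om{0}{}(x,\cdot)$ is the Cauchy kernel $\tfrac{dx}{x-z}$. Because $\omega(y,\cdot)$ descends to $\Sg\simeq\Omo/\Gamma$, its lift is $\Gamma$-invariant in $z$, so inside $\D$ the only singularities of $\Phi$ are a simple pole at $z=x$ (from the kernel) and the normalized double pole at $z=y$; every $\Gamma$-translate $z=\gamma y$ with $\gamma\neq\id$ lies in one of the excised disks bounded by the $\Con{a}$, hence outside $\D$.

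Before integrating I would dispose of the base-point ambiguity in $\int^{z}\om{0}{}(x,\cdot)$: shifting the lower limit changes it by a $z$-independent multiple of $dx$, which after integration over $\Con{a}$ multiplies $\oint_{\Con{a}}\omega(y,\cdot)$. Since for $a\in\Ip$ the contour $\Con{a}$ represents the cycle $\alpha_{a}$, while $\Con{-a}$ is glued to $\Con{a}$ and so represents $\pm\alpha_{a}$, the normalization $\oint_{\alpha_{a}}\omega(y,\cdot)=0$ makes this contribution vanish for every $a\in\I$. Thus the right-hand side of \eqref{eq:w2rhointeqn} is independent of the chosen antiderivative and $\Phi$ is unambiguous.

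The core of the argument is then the residue theorem applied to $\D$. At $z=x$ one finds $\operatorname{Res}\Phi=-\,\omega(y,x)=-\,\omega(x,y)$, using the symmetry of $\omega$. At $z=y$, inserting the expansion $\omega(y,z)=\bigl((z-y)^{-2}+\text{reg}\bigr)dz\,dy$ and differentiating the kernel yields $\operatorname{Res}\Phi=\tfrac{dx\,dy}{(x-y)^{2}}=\om{0}{}(x,y)$, so that only the biresidue-one part of the double pole survives and the regular terms drop out. Summing the two residues gives
\[
\frac{1}{\tpi}\oint_{\partial\D}\Phi=-\,\omega(x,y)+\om{0}{}(x,y).
\]

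Finally I would settle the orientation: with each $\Con{a}$ oriented positively about its centre $w_{a}$, the boundary of the exterior region $\D$ carries the opposite orientation, $\partial\D=-\sum_{a\in\I}\Con{a}$, whence $\tfrac{1}{\tpi}\sum_{a\in\I}\oint_{\Con{a}}\Phi=\omega(x,y)-\om{0}{}(x,y)$, which is exactly \eqref{eq:w2rhointeqn} once $z$ is written in the local coordinate $z_{a}=z-w_{a}$. I expect the main obstacle to be the careful bookkeeping rather than any deep difficulty: justifying the vanishing of the base-point term from the $\alpha_{a}$-normalization for all $a\in\I$ (not only $a\in\Ip$), confirming that no poles from the $\Gamma$-orbit of $y$ enter $\D$, and tracking the orientation sign. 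Once these are in place the identity follows at once from the two residue evaluations.
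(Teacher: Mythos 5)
Your proposal is correct and is essentially the paper's own argument: the paper's terse proof rests on the identity $\oint_{\mathcal{C}(z)}\int^{z}\om{0}{}(x,\cdot)\,\omega(y,z)=0$ for a large Jordan curve $\mathcal{C}$ enclosing all the $\Con{a}$ (equivalently, your observation that $\Phi$ has no residue at infinity), after which contour deformation picks up exactly your two residues at $z=x$ and $z=y$ plus the circle integrals. Your explicit treatment of the base-point ambiguity via $\oint_{\alpha_a}\omega(y,\cdot)=0$ and of the orientation of $\partial\D$ merely fills in details the paper leaves implicit.
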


\begin{proof}
The result follows  from \eqref{eq:omegag}  and the identity  
\[
\oint\limits_{\mathcal{C}(z)} \int\limits^{z}\omega^{(0)}(x,\cdot )
\, \omega (y,z) = 0,
\]
where $\mathcal C$ is a simple Jordan curve whose interior region contains  $\Con{a}$ for all $a\in \I$.
\end{proof} 
For $k,l\ge 1$ and $a,b\in \I$ we define  1-forms
\begin{align}
L_{a}(k,x):=\frac{\rho_{a} ^{k/2}}{\tpi\sqrt{k}}
\oint\limits_{\mathcal{C}_{a}(z_{a})}z_{a}^{-k}\omega ^{(0)}(x,z_{a})
=\frac{\sqrt{k}\rho_{a}^{k/2}}{(x-w_{a})^{k+1}}dx,  \label{eq:Lkdef}
\end{align}
and  the moment matrix
\begin{align}
A_{ab}(k,l) := &-\frac{\rho_{a}^{k/2}\rho_{b}^{l/2}}{(\tpi)^{2}\sqrt{kl}}
\oint\limits_{\mathcal{C}_{-a}(x)}
\oint\limits_{\mathcal{C}_{b}(y)}x^{-k}y^{-l}\omega
^{(0)}(x,y)  \notag \\
= &
\begin{cases}
\dfrac{(-1)^{k}(k+l-1)!}{\sqrt{kl}(k-1)!(l-1)!}\dfrac{\rho_{a}^{k/2}\rho_{b}^{l/2}}{(w_{-a}-w_{b})^{k+l}},& a\neq -b,
\\
0,& a=-b,
\end{cases}
\label{eq:Adef}
\end{align}%
Let $L(x)=(L_{a}(k,x))$, $R(y):=(L_{-a}(k,y))$ and $A=(A_{ab}(k,l))$ denote the
infinite row vector, column vector and matrix  doubly indexed by  $a,b\in \I$ and $k,l\ge 1$.
Let $I$ denote the infinite doubly indexed identity matrix and $(I-A)^{-1}:=\sum_{n\geq 0}A^{n}$.
We then find
\begin{proposition}
\label{prop:omgsew} 
$\omega (x,y)$ for $x,y\in \D$ is given by 
\begin{align}
\omega (x,y)=\om{0}{}(x,y)-L(x)\left (I-A\right)^{-1}R(y),
\label{eq:omgsew}
\end{align}
where $(I-A)^{-1}$ is convergent for $(\bm{w,\rho})\in \Cg$. 
\end{proposition}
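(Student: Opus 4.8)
The plan is to solve the integral equation \eqref{eq:w2rhointeqn} of Lemma~\ref{lem:inteqn2} for $\omega$ by iteration, recognising the row vector $L(x)$, the column vector $R(y)$ and the moment matrix $A$ as the natural building blocks thrown up by the contour integrals, and then resumming the resulting geometric series into $(I-A)^{-1}$. The starting point is to substitute \eqref{eq:w2rhointeqn} into itself repeatedly: each substitution of $\omega(y,z_{a})$ inside the integral over $\Con{a}$ replaces it by $\om{0}{}(y,z_{a})$ plus a further contour term, so that after $n$ substitutions one obtains $\om{0}{}(x,y)$ together with a sum of $n$-fold nested contour integrals of the genus-zero kernel $\om{0}{}$ alone.

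First I would extract the elementary blocks. Expanding $\int^{z_{a}}\om{0}{}(x,\cdot)$ about $z=w_{a}$ and pairing its Fourier modes against the residue at $w_{a}$ reproduces exactly the $1$-forms $L_{a}(k,x)$ of \eqref{eq:Lkdef}; this is the computation already carried out in the definition of $L_{a}(k,x)$. The crucial structural input is the automorphy of $\omega$ under the Schottky group $\Gamma$: for $z\in\Con{a}$ the sewing map $\gamma_{a}$ of \eqref{eq:gamaz} identifies $\Con{a}$ with $\Con{-a}$, so the $z_{a}$-expansion of $\omega(y,z_{a})$ on $\Con{a}$ is governed by its expansion on the conjugate contour $\Con{-a}$. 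This is what converts the innermost factor of each nested chain into modes indexed by $-a$, and hence produces the column vector $R(y)=(L_{-a}(k,y))$ at the far end of the chain.

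The heart of the algebra is that every adjacent pair of contour integrals in a nested chain collapses to a single entry of the moment matrix. Precisely, each interior step contributes a double contour integral $\oint_{\Con{-a}}\oint_{\Con{b}}$ of $\om{0}{}$ weighted by $z^{-k}y^{-l}$, which by a residue computation equals $A_{ab}(k,l)$ exactly as in \eqref{eq:Adef}. Consequently the $n$-fold iterate contributes $-L(x)A^{\,n-1}R(y)$ for $n\ge1$, and adding the zeroth-order term $\om{0}{}(x,y)$ and summing the geometric series yields $\om{0}{}(x,y)-L(x)\bigl(\sum_{n\ge0}A^{n}\bigr)R(y)=\om{0}{}(x,y)-L(x)(I-A)^{-1}R(y)$, which is \eqref{eq:omgsew}.

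The main obstacle is analytic rather than combinatorial: one must show that the infinite geometric series $(I-A)^{-1}=\sum_{n\ge0}A^{n}$ converges for all $(\bm{w,\rho})\in\Cg$, which simultaneously justifies the termwise residue evaluations and the interchange of summation with the contour integrals. The entries \eqref{eq:Adef} grow combinatorially through the factor $(k+l-1)!/((k-1)!\,(l-1)!)$, so convergence is not automatic; the remedy is to estimate a suitable norm of $A$ using the defining inequality $|w_{a}-w_{b}|>|\rho_{a}|^{1/2}+|\rho_{b}|^{1/2}$ of $\Cg$ in \eqref{eq:Cfrakg}, which bounds each $|w_{-a}-w_{b}|$ away from zero relative to $|\rho_{a}|^{1/2}|\rho_{b}|^{1/2}$ and forces the operator norm, equivalently the spectral radius, of $A$ to be strictly less than one on $\Cg$. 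Establishing this bound, with uniform control over the doubly-infinite index set $(a,k)$, is the step I expect to require the most care.
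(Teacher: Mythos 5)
Your combinatorial skeleton agrees with the paper's proof: the blocks $L(x)$, $R(y)$, $A$ and the geometric-series resummation are exactly the right objects. But your argument has a genuine gap at its decisive analytic step, and the gap appears twice in linked form. First, your infinite iteration of \eqref{eq:w2rhointeqn} leaves after $n$ substitutions a remainder term containing the unknown $\omega$ inside an $n$-fold nested contour integral; to drop it you need precisely the smallness of $A^{n}$ that you defer to the end, so the scheme is circular as stated. Second, the convergence step itself is not established and is unlikely to go through as you propose: operator norm $<1$ and spectral radius $<1$ are not equivalent for these infinite matrices, and the natural estimate is borderline. From \eqref{eq:Adef}, with $u=|\rho_{a}|^{1/2}/|w_{-a}-w_{b}|$ and $v=|\rho_{b}|^{1/2}/|w_{-a}-w_{b}|$, one has $\sum_{l\ge 1}\binom{k+l-1}{l}u^{k}v^{l}=u^{k}(1-v)^{-k}$, and the defining inequality of $\Cg$ in \eqref{eq:Cfrakg} gives only $u+v<1$; so each single-handle block already contributes a row sum just below $1$, and summing over the $2g-1$ admissible indices $b$ pushes standard $\ell^{1}$ or $\ell^{\infty}$ norms above $1$ for $g\ge 2$. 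No elementary norm bound on all of $\Cg$ is available, which is why the paper never attempts one.

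The paper's route avoids both problems. It applies \eqref{eq:w2rhointeqn} only twice, using the symmetry of $\omega$, to obtain the exact identity $\omega(x,y)=\om{0}{}(x,y)-L(x)(I+Y)R(y)$, where $Y$ of \eqref{eq:Ydef} is the moment matrix of the genuine genus $g$ bidifferential $\omega$ itself; convergence of $Y$ on $\Cg$ is automatic because $\omega$ exists and is holomorphic on the contours there. Taking moments then yields the closed equation $Y=A+A(I+Y)A$, which is solved recursively --- a procedure that converges $\rho$-adically since every entry of $A$ carries positive powers of $\rho_{a}^{1/2}$ --- to give $Y=\sum_{n\ge 1}A^{n}$. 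Thus $(I-A)^{-1}=I+Y$ converges because $Y$ does, with no spectral estimate on $A$ at all; note that the holomorphy statement in Theorem~\ref{theor:detIA} is likewise proved indirectly, via the auxiliary integral $S(\bm{w,\rho})$ and Cauchy inequalities, rather than by bounding $A$. To repair your proof you would either need to supply a weighted-norm bound that currently does not exist in the literature, or adopt the paper's device of encoding the exact $\omega$ in the auxiliary matrix $Y$.
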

\begin{proof}
We give a proof using arguments  similar to  \cite{Y} and Sections~3.2 and 5.2 of  \cite{MT1}. 
Since  $\omega (x,y)$ is symmetric and applying \eqref{eq:w2rhointeqn} twice we  find
\begin{align*}
\omega (x,y)= &\om{0}{}(x,y)
+\frac{1}{\tpi}\sum_{a\in\I}\,
\oint\limits_{\mathcal{C}_{a}(z_{a})}\int\limits^{z_{a}}\om{0}{}(x,\cdot )
\;\om{0}{}(y,z_{a})\\
&
+\frac{1}{(\tpi)^2}\sum_{a,b\in\I}\,
\oint\limits_{\mathcal{C}_{a}(z_{a})}\oint\limits_{\mathcal{C}_{b}(z_{b})}
\int\limits^{z_{a}}\om{0}{}(x,\cdot )\;
\omega (z_{a},z_{b}) \int\limits^{z_{b}}\om{0}{}(y,\cdot ).
\end{align*}
Applying \eqref{eq:Lkdef}  
and noting that $\mathcal{C}_{a}(z_{a})\sim-\mathcal{C}_{-a}(z_{-a})$  we  find
\begin{align}
\omega (x,y)=\om{0}{}(x,y)- L(x)(I+Y)R(y),
\label{eq:omY}
\end{align}
for moment matrix 
\begin{align}
Y_{ab}(k,l): =-\frac{\rho_{a}^{k/2}\rho_{b}^{l/2}}{(\tpi)^{2}\sqrt{kl}}
\oint\limits_{\mathcal{C}_{-a}(x)}
\oint\limits_{\mathcal{C}_{b}(y)}x^{-k}y^{-l}\omega(x,y),
\label{eq:Ydef}
\end{align}
for $a,b\in \I$ and $k,l \ge 1$. We note that $Y_{ab}(k,l)$ is convergent for $(\bm{w,\rho})\in \Cg$. 
Define the infinite matrix $Y=\left (Y_{ab}(k,l)\right)$.
Taking moments of \eqref{eq:omY} we obtain $Y=A+A(I+Y)A$
which can be recursively solved to find 
\begin{align}
\label{eq:YA}
Y=\sum_{n\ge 1} A^n=(I-A)^{-1}-I.
\end{align} 
Thus \eqref{eq:omY} implies \eqref{eq:omgsew}. Since $Y$ is convergent for $(\bm{w,\rho})\in \Cg$ then  $(I-A)^{-1}$ is also.
\end{proof}

The matrix $A$  and the determinant $\det (I-A)$ defined by 
\begin{align*}
\log \det (I-A):
=\Tr\log(I-A)
=-\sum_{k\ge 1}\frac{1}{k}\Tr A^{k},
\end{align*}
will be of central importance in our later discussions.
\begin{theorem}
\label{theor:detIA} $\det (I-A)$ is non-vanishing and holomorphic on $ \Cg$. 
\end{theorem}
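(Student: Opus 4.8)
The plan is to treat $A$ as a holomorphic family of trace-class operators on the Hilbert space $\ell^{2}$ indexed by the pairs $(a,k)$ with $a\in\I$ and $k\ge 1$, and to deduce holomorphy and non-vanishing of $\det(I-A)$ from the Fredholm theory of such families together with the invertibility of $I-A$ already recorded in Proposition~\ref{prop:omgsew}. First I would record two elementary facts. Each entry $A_{ab}(k,l)$ is a rational function of the $w$'s times $\rho_{a}^{k/2}\rho_{b}^{l/2}$, and in any trace $\Tr A^{n}$ the variable $\rho_{a_{i}}$ enters with integer total exponent $k_{i}$ (it contributes $\rho_{a_i}^{k_i/2}$ once as a row label and once as a column label); hence every $\Tr A^{n}$ is single-valued and no choice of branch of the square root is needed. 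Next, using $\frac{(k+l-1)!}{(k-1)!(l-1)!}=\binom{k+l}{k}\frac{kl}{k+l}$ together with $\sqrt{kl}\le\frac{k+l}{2}$, one obtains for $a\neq -b$ the entrywise bound
\begin{align*}
|A_{ab}(k,l)|\le \tfrac12\binom{k+l}{k}\,u_{ab}^{\,k}v_{ab}^{\,l},\qquad u_{ab}=\frac{|\rho_{a}|^{1/2}}{|w_{-a}-w_{b}|},\quad v_{ab}=\frac{|\rho_{b}|^{1/2}}{|w_{-a}-w_{b}|}.
\end{align*}
The defining inequality of $\Cg$ in \eqref{eq:Cfrakg}, applied to the pair $(-a,b)$ with $-a\neq b$ and using $\rho_{-a}=\rho_{a}$, reads exactly $u_{ab}+v_{ab}<1$. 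On a compact $K\subset\Cg$ this gives a uniform $u_{ab}+v_{ab}\le\kappa<1$, so that the relevant double sums $\sum_{k,l}\binom{k+l}{k}u_{ab}^{k}v_{ab}^{l}=(1-u_{ab}-v_{ab})^{-1}$ converge geometrically and uniformly on $K$.

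The technical heart, which I expect to be the main obstacle, is to upgrade these entrywise estimates to the statement that $A$ is trace-class with trace norm bounded locally uniformly on $\Cg$; entrywise domination alone does not control the trace ideal. The cleanest route is to recognise $A$, after its fixed rescalings, as the integral operator with kernel $\om{0}{}(x,y)$ acting between the mutually disjoint analytic contours $\{\Con{a}\}$, which are genuinely disjoint on $\Cg$ by \eqref{eq:Cfrakg}: because this kernel is holomorphic on a neighbourhood of each product $\Con{-a}\times\Con{b}$ and the contours are separated, the coefficient maps have geometrically decaying norms and the associated operator is nuclear, its singular-value decay rate and trace norm controlled by $\kappa$ and hence locally bounded on $\Cg$. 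Equivalently one may exhibit an explicit factorisation of $A$ through $\ell^{2}$ into two Hilbert--Schmidt operators, using $(1-s-t)^{-2}=\int_{0}^{\infty}u\,e^{-u(1-s-t)}\,du$ to split the binomial coefficient, with the separation of contours again furnishing the required summability. Either way one concludes that $\bm{w,\rho}\mapsto A$ is a holomorphic map into the trace-class ideal.

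With $A$ trace-class I would finish as follows. Trace-class operators are compact, so the spectrum of $A$ is $\{0\}\cup\{\lambda_{j}\}$ with $\lambda_{j}\to 0$ and the spectral radius attained. The convergence of the Neumann series $(I-A)^{-1}=\sum_{n\ge0}A^{n}$ in Proposition~\ref{prop:omgsew} forces $A^{n}\to 0$ entrywise, so pairing $A^{n}v=\lambda_{j}^{n}v$ against a basis vector forces $|\lambda_{j}|<1$ for every eigenvalue; compactness makes the supremum attained, whence the spectral radius $r(A)<1$ throughout $\Cg$. Consequently $\Tr A^{k}=\sum_{j}\lambda_{j}^{k}$ satisfies $|\Tr A^{k}|\le r(A)^{\,k-1}\,\|A\|_{1}$ by Weyl's inequality $\sum_{j}|\lambda_{j}|\le\|A\|_{1}$, so the series $-\sum_{k\ge1}\tfrac1k\Tr A^{k}$ converges locally uniformly on $\Cg$. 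Each $\Tr A^{k}$ is holomorphic, being a locally uniformly convergent sum of the holomorphic entries of $A^{k}$, so $\log\det(I-A)$ is a locally uniform limit of holomorphic partial sums and is therefore holomorphic, and $\det(I-A)=\exp\!\big(\log\det(I-A)\big)$ is holomorphic on $\Cg$. Non-vanishing is then immediate, since an exponential is never zero; equivalently it reflects the invertibility of $I-A$ recorded in Proposition~\ref{prop:omgsew}, and as $\Cg$ is connected this single holomorphic Fredholm determinant is consistent with the series definition wherever the latter applies.
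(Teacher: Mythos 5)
Your overall strategy is genuinely different from the paper's, but as written it has a gap at its load-bearing step: the spectral bound $r(A)<1$. Proposition~\ref{prop:omgsew} furnishes convergence of the Neumann series $\sum_{n\ge 0}A^{n}$ only in the entrywise sense (it is extracted from the entrywise identity $Y=\sum_{n\ge 1}A^{n}$), and entrywise decay $(A^{n})_{ij}\to 0$ does not support your pairing argument: for an eigenvector $v\in\ell^{2}$ with $Av=\lambda v$ and $v_{i}\neq 0$ you have $\lambda^{n}v_{i}=(A^{n}v)_{i}=\sum_{j}(A^{n})_{ij}v_{j}$, an infinite sum over $j$, and without a dominant summable in $j$ uniformly in $n$ (or operator-norm control of $A^{n}$, which entrywise convergence does not provide) you cannot pass to the limit term by term. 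Since your bound $|\Tr A^{k}|\le r(A)^{k-1}\|A\|_{1}$ is the sole source of convergence for $-\sum_{k\ge 1}\frac{1}{k}\Tr A^{k}$, both the holomorphy and the non-vanishing conclusions stall at this point. A smaller omission: even granting $r(A)<1$ pointwise, holomorphy needs locally uniform convergence, hence $r(A)\le \kappa'<1$ on compacts; this is patchable (upper semicontinuity of the spectral radius together with norm-continuity of $A$ in the parameters), but you assert it without argument. The paper avoids all spectral data of $A$: it proves absolute, locally uniform convergence of $\sum_{n\ge 1}\Tr A^{n}$ directly, by identifying this series with the explicit double contour integral $S(\bm{w,\rho})=\sum_{a\in \I}\frac{1}{(\tpi)^{2}}\oint\oint\omega (z_{-a},z_{a})\log\left(1-\frac{\rho_a }{z_{-a}z_{a}}\right)$ and estimating that integral via Cauchy's inequality on polydiscs; absolute convergence then permits inserting the $\frac{1}{n}$ factors, and non-vanishing is automatic from $\det(I-A)=\exp\left(-\sum_{k\ge 1}\frac{1}{k}\Tr A^{k}\right)$.

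Conversely, what you flag as the ``technical heart'' is in fact routine: the trace norm of a matrix is dominated by its entrywise $\ell^{1}$ norm, since $A=\sum_{i,j}A_{ij}\,e_{i}e_{j}^{*}$ with each rank-one $e_{i}e_{j}^{*}$ of trace norm one gives $\|A\|_{1}\le\sum_{i,j}|A_{ij}|$. Your estimate $|A_{ab}(k,l)|\le\frac{1}{2}\binom{k+l}{k}u_{ab}^{k}v_{ab}^{l}$, together with the correct observation that \eqref{eq:Cfrakg} applied to the pair $(-a,b)$ (using $\rho_{-a}=\rho_{a}$) reads exactly $u_{ab}+v_{ab}<1$, so that $\sum_{k,l}\binom{k+l}{k}u_{ab}^{k}v_{ab}^{l}\le (1-u_{ab}-v_{ab})^{-1}$ locally uniformly, therefore already yields trace class with locally uniform bounds; no kernel-nuclearity argument or Hilbert--Schmidt factorisation is needed. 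Those estimates, the single-valuedness of $\Tr A^{n}$ in the $\rho_{a}$, and the final exponentiation are all sound and salvageable. What is missing is a genuine proof that $\sum_{k\ge 1}\frac{1}{k}\Tr A^{k}$ converges --- either a rigorous derivation of $r(A)<1$, or, as in the paper, a direct representation of $\sum_{n\ge 1}\Tr A^{n}$ by a manifestly holomorphic integral.
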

\begin{proof}
	Let  $\Consig{a}$ denote  the circular Jordan curve of radius $\sigma|\rho_{a}|^{\half}$ centred at $w_{a}$ with local coordinate $z_{a}=z-w_{a}$  for some $\sigma>1$ i.e. $\left| z_{-a}z_{a}\right|>|\rho_{a}|$. 
	Consider the sum of integrals
\begin{align*}
S(\bm{w,\rho})=\sum\limits_{a\in \I}\frac{1}{(\tpi)^{2}}
	\oint\limits_{\Consig{-a}}
	\oint\limits_{\Consig{a}}%
	\omega (z_{-a},z_{a})\log \left(1-\frac{\rho_a }{z_{-a}z_{a}}\right).
\end{align*}
We first show that $S(\bm{w,\rho})$ is holomorphic on $\Cg$.
Let $\Delta_{3g}=\{\bm{\mu}:|\mu_{i}|< R_{i}\}\subset\Cg$ be a polydisc for local coordinates $\bm{\mu}=(\mu_{1},\ldots,\mu_{3g})$.   
Write $\omega (z_{-a},z_{a})=f(z_{-a},z_{a},\bm{\mu})dz_{-a}dz_{a}$ for $z_{\pm a}\in \Consig{\pm a}$. 
$f(z_{-a},z_{a},\bm{\mu})$ is holomorphic on $\Cg$ which implies that
\[
f(z_{-a},z_{a},\bm{\mu})=\sum_{\bm{n}}\bm{\mu^{n}}f_{\bm{n}}(z_{-a},z_{a}),
\]
is absolutely convergent on  ${\Delta}_{3g}$ 
where $\bm{\mu^{n}}:=\prod_{i}\mu_{i}^{n_{i}}$ for integers $n_{i}\ge 0$. Furthermore,  $f_{\bm{n}}(z_{-a},z_{a})$ satisfies Cauchy's inequality  e.g.  \cite{Gu}
\begin{align*}
|f_{\bm{n}}(z_{-a},z_{a})|\le \frac{M}{\bm{R^n}},
\end{align*}
for $\bm{R^n}=\prod_{i}R_{i}^{n_{i}}$ and $M=\max_{a}\sup\limits
_{z_{\pm a}\in \Consig{\pm a}}\sup\limits
_{\bm{\mu}\in \Delta_{3g}}|f(z_{a},z_{-a} )|$.
We then find that
\begin{align*}
|S(\bm{w,\rho})|&
\leq \sum\limits_{a\in \I}\sum_{\bm{n}}\frac{\bm{|\mu|^{n}}}{(2\pi )^{2}}
\oint\limits_{\Consig{-a}}
\oint\limits_{\Consig{a}}
\left|f_{n}(z_{-a},z_{a})
\log \left(1-\frac{\rho_a }{z_{-a}z_{a}}\right)dz_{-a}dz_{a}\right| \\
&\leq M\sigma^{2}\left|\log \left(1-{\sigma^{-2}}\right)\right|
\sum\limits_{a\in \I}|\rho_{a}|
\prod_{i}
\left(1-\frac{|\mu_{i}|}{R_{i}}\right)^{-1}.
\end{align*}%
Thus $S$ is absolutely convergent and holomorphic on $\Cg$. 
Since $\left| z_{-a}z_{a}\right|>|\rho_{a}|$ we find
\begin{align*}
S(\bm{w,\rho})= &-\sum\limits_{a\in \I}\sum_{k\geq 1}\frac{\rho_a ^{k}}{k}
\frac{1}{(\tpi)^{2}}
\oint\limits_{\Consig{-a}}
\oint\limits_{\Consig{a}}
\omega (z_{-a},z_{a})z_{-a}^{-k}z_{a}^{-k} \\
= &\Tr Y=\sum_{n\geq 1}\Tr(A^{n}),
\end{align*}%
using \eqref{eq:Ydef} and \eqref{eq:YA}.
Therefore $\sum_{n\geq 1}\Tr(A^{n})$ is  holomorphic  on $\Cg$ and thus it follows that 
$\sum_{n\geq 1}\frac{1}{n}\Tr(A^{n})=-\Tr\log (I-A)$ is also. 
\end{proof}

We identify the  homology cycle $\alpha_{a}$  with $ \Con{-a}$ and   $\beta_{a}$ with a path connecting  $z\in  \Con{a}$ to $z'=\gamma_{a}z\in  \Con{-a}$. 
The $g$ normalized holomorphic one forms $\nu_{b}$,  $b\in \Ip$, can be
expressed in terms of  $L(x),R(x),A$ and moments of $\om{0}{w_{b}-w_{-b}}$ of \eqref{eq:omK0} defined for $a\in \I$  by
\begin{align}
d_{b}^{a}(k)&:=
\begin{cases}
\dfrac{\rho_a ^{k/2}}{\sqrt{k}}
\left(- (w_{-b}-w_{a})^{-k}+(w_{b}-w_{a})^{-k}\right),
& |a|\neq b,
\\  \sgn(a)
 \dfrac{\rho_a ^{k/2}}{\sqrt{k}}(w_{-b\sgn(a)}-w_{a})^{-k},& |a|= b.
\end{cases}
\end{align}%
We let $d_{b}=(d^{a}_{b}(k))$ and $\overline{d}_b=(d_{b}^{-a}(k))$ denote $g$  infinite row and column vectors, respectively, indexed by $a\in \I$ and $k\ge 1$.
We find
\begin{proposition}
\label{prop:nu} 
$\nu_{b}(x)$ for $x\in \D$ and $b\in \Ip$ is given by
\begin{align}\label{eq:nug}
\nu_{b}(x)-\om{0}{w_{b}-w_{-b}}(x)=  -d_{b} \,(I-A)^{-1}\,R(x)
=-L(x)\,(I-A)^{-1}\,\overline{d}_{b}.
\end{align}
\end{proposition}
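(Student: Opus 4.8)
The plan is to mirror the proof of Proposition~\ref{prop:omgsew}: I would compute the period $\nu_{b}(x)=\oint_{\beta_{b}}\omega(x,\cdot)$ of \eqref{eq:nu} either by substituting the explicit sewing form \eqref{eq:omgsew} or, equivalently, by integrating the integral equation of Lemma~\ref{lem:inteqn2} over $\beta_{b}$, and then solve the resulting linear moment recursion exactly as $Y=(I-A)^{-1}-I$ was obtained in \eqref{eq:YA}. The two displayed expressions for $\nu_{b}$ are two sides of the same coin: since $\omega(x,y)=\omega(y,x)$, integrating the \emph{second} argument over $\beta_{b}$ yields the form $-L(x)(I-A)^{-1}\overline{d}_{b}$, while integrating the \emph{first} argument yields $-d_{b}(I-A)^{-1}R(x)$, so it suffices to establish one of them.

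Substituting \eqref{eq:omgsew} into \eqref{eq:nu} and pulling the $x$-dependent factor $L(x)$ and the constant matrix $(I-A)^{-1}$ out of the $\beta_{b}$-integral gives
\begin{align*}
\nu_{b}(x)=\oint_{\beta_{b}}\om{0}{}(x,\cdot)-L(x)\,(I-A)^{-1}\oint_{\beta_{b}}R(\cdot).
\end{align*}
The key structural fact is that $\nu_{b}$ is a holomorphic one-form on $\Sg$ normalized by $\oint_{\alpha_{a}}\nu_{b}=\tpi\,\delta_{ab}$, so its Schottky expansion must be a genus-zero differential of the third kind corrected by a series in the forms $L_{a}(k,\cdot)$ of \eqref{eq:Lkdef}. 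Using the identification $\alpha_{a}\leftrightarrow\Con{-a}$, the unique genus-zero third-kind form matching this normalization at leading order is $\om{0}{w_{b}-w_{-b}}$, which I would therefore identify as the leading term.

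With this leading term isolated, I would write $\nu_{b}(x)=\om{0}{w_{b}-w_{-b}}(x)-L(x)\,M_{b}$ for an unknown coefficient vector $M_{b}$, substitute back into the integral equation, and take moments against the contours $\Con{a}$, converting between positive and negative powers of $z_{a}$ by the sewing relation \eqref{eq:sew}. The moments of the $L$-series reproduce the matrix $A$ of \eqref{eq:Adef}, while the moments of $\om{0}{w_{b}-w_{-b}}$ reproduce $\overline{d}_{b}$; a direct residue computation confirms the stated piecewise formula for $d^{a}_{b}(k)$, the split into $|a|\neq b$ and $|a|=b$ arising according to whether a pole of $\om{0}{w_{b}-w_{-b}}$ sits at the centre $w_{a}$ of the moment contour, the case $|a|=b$ removing the divergent self-term. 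This produces a linear recursion for $M_{b}$ of the same shape as the one solved in \eqref{eq:YA}, whose solution is $M_{b}=(I-A)^{-1}\overline{d}_{b}$, giving the second identity; the first then follows by the symmetry argument above. Convergence of $(I-A)^{-1}$ on $\Cg$ is inherited from Proposition~\ref{prop:omgsew} and Theorem~\ref{theor:detIA}.

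The main obstacle will be the rigorous identification of the leading term with the clean differential $\om{0}{w_{b}-w_{-b}}$, whose poles sit at the \emph{centres} $w_{\pm b}$. A naive endpoint evaluation of $\oint_{\beta_{b}}\om{0}{}(x,\cdot)$ instead produces poles at the path endpoints $z\in\Con{b}$ and $\gamma_{b}z\in\Con{-b}$, and this spurious dependence on the representative point $z$ must cancel against the corresponding $z$-dependence of $\oint_{\beta_{b}}R(\cdot)$ once it is resummed by $(I-A)^{-1}$; the moment recursion is precisely the bookkeeping device that makes this cancellation manifest. The second delicate point, tightly bound to the first, is keeping the index and sign conventions consistent throughout, namely the interplay of $a$ versus $-a$, of $\Con{a}$ versus $\Con{-a}$, and of $L$ versus $R$, together with the signs emerging from the residue evaluation of $d^{a}_{b}(k)$.
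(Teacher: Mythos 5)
Your plan correctly predicts the downstream mechanics of the paper's proof: the leading term $\om{0}{w_{b}-w_{-b}}$, the resummation of corrections by the same moment recursion that produced $Y=(I-A)^{-1}-I$ in \eqref{eq:YA}, the origin of the $|a|=b$ case of $d^{a}_{b}(k)$ in the removal of a singular self-term, the two displayed expressions related via the symmetry of $\omega$, and convergence inherited from Proposition~\ref{prop:omgsew}. But there is a genuine gap at the crux, and both of your routes stop exactly where the real work begins. Route (a), substituting \eqref{eq:omgsew} into \eqref{eq:nu}, gives $\nu_{b}(x)=\om{0}{\gamma_{b}z-z}(x)-L(x)(I-A)^{-1}\int_{z}^{\gamma_{b}z}R(\cdot)$ with poles at the path endpoints, and you assert---without carrying out---that the $z$-dependence cancels and the whole expression resums to $\om{0}{w_{b}-w_{-b}}(x)-L(x)(I-A)^{-1}\overline{d}_{b}$; that rearrangement is not a formal consequence of \eqref{eq:YA} and would need its own argument (essentially a telescoping over the cyclic group $\langle\gamma_{b}\rangle$ inside the Poincar\'e series \eqref{eq:omgSch}, with a rearrangement/convergence justification). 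Route (b), the ansatz $\nu_{b}=\om{0}{w_{b}-w_{-b}}-L(x)M_{b}$, is to be ``substituted back into the integral equation,'' but no integral equation satisfied by $\nu_{b}$ is available at that point: Lemma~\ref{lem:inteqn2} constrains $\omega(x,y)$, not its $\beta_{b}$-period, and your structural claim that the Schottky expansion of $\nu_{b}$ \emph{must} have this form is precisely what requires proof---holomorphy on $\Sg$ plus the $\alpha$-period normalization does not by itself yield a convergent expansion of this shape on $\D$.

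The paper closes this gap with one additional idea absent from your proposal: the auxiliary contour identity
\[
\oint\limits_{\mathcal{C}(z)} \omega (x,z) \, \int\limits^{z}\om{0}{w_{b}-w_{-b}}
= 0,
\]
pairing the \emph{genus $g$} form $\omega$ (not $\om{0}{}$) against the primitive of the genus-zero third-kind form. Deforming $\mathcal C$ onto the curves $\Con{a}$, and subtracting $\sgn(a)\delta_{|a|,b}\log z_{a}$ to cure the multivaluedness of that primitive near $\Con{\pm b}$, yields the integral equation (generalizing Corollary~5 of \cite{Y})
\[
\nu_{b}(x) -\om{0}{w_{b}-w_{-b}}(x)=
\frac{1}{\tpi}
\sum_{a\in \I}\oint\limits_{\mathcal{C}_{a}(z_a)}\omega (x,z_a)
\Big(\int^{z_a}\om{0}{w_{b}-w_{-b}}
 -\sgn(a)\delta_{|a|,b}\log z_a \Big),
\]
whose Taylor coefficients in $z_{a}$ are (up to the normalizing factors $\rho_{a}^{k/2}/\sqrt{k}$) the entries $d^{a}_{b}(k)$, and whose $\omega$-moments are resummed via Proposition~\ref{prop:omgsew} into $(I-A)^{-1}R(x)$, giving \eqref{eq:nug} in one stroke. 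This identity is what simultaneously legitimizes your ansatz, produces the leading term with poles at the centres $w_{\pm b}$ free of endpoint artefacts, and encodes the $|a|=b$ self-term subtraction you described only impressionistically; it is exactly the step your proposal flags as ``the main obstacle'' without supplying it.
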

\begin{proof}
Consider the identity 
\[
\oint\limits_{\mathcal{C}(z)} \omega (x,z) \, \int\limits^{z}\om{0}{w_{b}-w_{-b}}
= 0,
\]
where $\mathcal C$ is a simple Jordan curve whose interior region contains  $\Con{a}$ for all $a\in \I$.
Similarly to Lemma~\ref{lem:inteqn2}, this implies the following generalization of Corollary~5 of  \cite{Y} 
\begin{align*}
\nu_{b}(x) -\om{0}{w_{b}-w_{-b}}(x)= &
\frac{1}{\tpi}%
\sum_{a\in \I}\oint\limits_{\mathcal{C}_{a}(z_a)}\omega (x,z_a)
\left(\int^{z_a}\om{0}{w_{b}-w_{-b}} \right.
 -\sgn(a)\delta_{|a|,b}\log z_a \bigg).
\end{align*}
The result follows by repeating an approach similar to Proposition~\ref{prop:omgsew}.
 \end{proof}

 We may also obtain the genus $g$ period matrix  by generalizing
Lemma~5 of  \cite{Y} to find
\begin{proposition}
\label{prop:period} The genus $g$ period matrix $\Omega_{ab}$ for $a,b\in\Ip$ is given by 
\begin{align}
\tpi \Omega_{ab}= &
\log \left(\frac{\left(w_{a}-w_{b} \right)\left(w_{-a}-w_{-b} \right)}
{\left(w_{-a}-w_{b} \right)\left(w_{a}-w_{-b} \right)}\right)
-d_{a}(I-A)^{-1}\,\overline{d}_{b} , \quad a\neq b, 
\label{eq:Omgab}\\
\tpi \Omega_{aa}= &
\log \left(\frac{-\rho_{a} }{\left(w_{a}-w_{-a} \right)^{2}}\right)
-d_{a}(I-A)^{-1}\,\overline{d}_{a} .
\label{eq:Omgaa}
\end{align}%
\end{proposition}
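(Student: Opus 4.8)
The plan is to compute the period $\tpi\Omega_{ab}=\oint_{\beta_a}\nu_b$ directly from the sewing formula of Proposition~\ref{prop:nu}, following the strategy of Lemma~5 of \cite{Y}. Recall that $\beta_a$ is represented by a path from a point $z\in\Con{a}$ to its image $\gamma_a z\in\Con{-a}$ under the M\"obius map \eqref{eq:gamaz}. Substituting \eqref{eq:nug} gives
\begin{align*}
\tpi\Omega_{ab}=\int_{z}^{\gamma_a z}\om{0}{w_b-w_{-b}}-\left(\int_{z}^{\gamma_a z}L(x)\right)(I-A)^{-1}\overline{d}_b,
\end{align*}
so the period splits into a genus-zero third-kind contribution and an $(I-A)^{-1}$-correction. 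Since $\nu_b$ descends to a holomorphic form on $\Sg$, the total is independent of the chosen $z\in\Con{a}$, although neither summand is; this invariance is the organizing principle of the whole computation.

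First I would treat the correction term. From \eqref{eq:Lkdef} the component $L_c(k,\cdot)$ has antiderivative $-\rho_c^{k/2}/(\sqrt{k}\,(x-w_c)^{k})$, so $\int_{z}^{\gamma_a z}L_c(k,x)$ is a difference of two such terms at the endpoints. Using the sewing relation \eqref{eq:sew}, equivalently $\gamma_a z-w_{-a}=\rho_a/(z-w_a)$, to rewrite the value at $\gamma_a z$, I expect the endpoint data, after absorbing its residual $z$-dependence into the logarithmic term, to reproduce exactly $-d_a(I-A)^{-1}\overline{d}_b$. The two branches $|c|\neq a$ and $|c|=a$ in the definition of $d^{c}_a(k)$ arise precisely from whether the pole $w_c$ is one of the two centres $w_{\pm a}$ joined by $\beta_a$; convergence of $(I-A)^{-1}$ on $\Cg$ is guaranteed by Theorem~\ref{theor:detIA}.

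Next I would evaluate the genus-zero term. By \eqref{eq:omK0}, $\int_{z}^{\gamma_a z}\om{0}{w_b-w_{-b}}=\log\frac{\gamma_a z-w_b}{\gamma_a z-w_{-b}}-\log\frac{z-w_b}{z-w_{-b}}$, which (equivalently via the double-period route $\oint_{\beta_a}\oint_{\beta_b}\omega^{(0)}$ using Proposition~\ref{prop:omgsew}) is the logarithm of a cross-ratio of the four points $z,\gamma_a z,w_b,w_{-b}$. For $a\neq b$ the four centres $w_{\pm a},w_{\pm b}$ are distinct and, after combining with the residual endpoint terms from the correction, this assembles into the M\"obius-invariant cross-ratio $\log\frac{(w_a-w_b)(w_{-a}-w_{-b})}{(w_{-a}-w_b)(w_a-w_{-b})}$ of \eqref{eq:Omgab}. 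For $a=b$ the poles $w_a,w_{-a}$ of $\om{0}{w_a-w_{-a}}$ sit exactly at the centres of the two contours joined by $\beta_a$, so the naive cross-ratio degenerates; the coincidence limit must be regularized using the $-\sgn(a)\delta_{|a|,b}\log z_a$ correction already appearing in the proof of Proposition~\ref{prop:nu}, and the multiplier $\rho_a$ of $\gamma_a$ then enters to give $\log\frac{-\rho_a}{(w_a-w_{-a})^2}$ as in \eqref{eq:Omgaa}.

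The main obstacle is the bookkeeping of these $z$-dependent boundary contributions and, above all, the branch of the logarithm: the individual summands are multivalued on $\Omo$, and one must verify that their $z$-dependence cancels exactly and that the diagonal coincidence limit is taken consistently with the normalization $\oint_{\alpha_a}\nu_b=\tpi\delta_{ab}$. Once the branch is fixed and the $a=b$ regularization is pinned down, matching the remaining holomorphic data against $-d_a(I-A)^{-1}\overline{d}_b$ is routine.
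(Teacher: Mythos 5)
Your proposal is correct and is essentially the paper's own (unwritten) argument: Proposition~\ref{prop:period} is stated in the paper without proof, merely as a generalization of Lemma~5 of \cite{Y}, and your plan --- substitute \eqref{eq:nug} into $\oint_{\beta_a}\nu_b$ along the path from $z\in\Con{a}$ to $\gamma_a z\in\Con{-a}$, evaluate the endpoint data via the sewing relation \eqref{eq:sew}, use $z$-independence of the total to extract constant terms (which is where the two branches of $d^{a}_{b}(k)$ and the convergence of $(I-A)^{-1}$ enter), and regularize the $a=b$ case with the $\sgn(a)\delta_{|a|,b}\log z_a$ correction from the proof of Proposition~\ref{prop:nu} --- is exactly that computation, carried out in more detail than the paper itself provides. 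The only quibble is terminological: $\rho_a$ is the sewing parameter, not the multiplier of $\gamma_a$, since the multiplier is $q_a=c\left(-\rho_a/(w_a-w_{-a})^{2}\right)$, which agrees with $-\rho_a/(w_a-w_{-a})^{2}$ only to first order, so the logarithm in \eqref{eq:Omgaa} is produced directly by the sewing relation rather than being $\log q_a$.
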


\section{The Heisenberg Vertex Operator Algebra on $\Sg$}
\subsection{Vertex operator algebras}
Consider a simple Vertex Operator Algebra (VOA) with graded vector space $V=\oplus_{n\ge 0}V_{n}$ and  vertex operators $Y(v,z)=\sum_{n\in\Z} v(n)z^{-n-1}$ for $v\in V$ e.g.  \cite{FHL,FLM,Ka,LL,MT3}. We denote the conformal weight of $v\in V_{n}$ by $\wt(v)=n$. In particular, we highlight the  commutator and associativity identities 
\begin{align}
[u(k),Y(v,z)]=& \sum_{j\ge 0}\binom{k}{j}Y(u(j)v,z)z^{k-j}.
\label{eq:Comm}
\\
(y+z)^{M} Y(u,y+z)Y(v,z) =& (y+z)^{M} Y(Y(u,y)v,z),\quad (M\gg 0),
\label{eq:Assoc}
\end{align}
We assume that $V$ is of CFT type (i.e. $V_{0}=\C\vac$) with a unique symmetric invertible invariant bilinear form $\langle\ ,\ \rangle$ with normalization $\langle\vac,\vac \rangle=1$ where  \cite{FHL,Li}
\begin{align*}
\langle Y(a,z)b,c \rangle =& \left\langle b,Y^{\dagger}(a,z)c \right\rangle,
\end{align*}
for $Y^{\dagger}(a,z)=\sum_{n\in\Z} a^{\dagger}(n)z^{-n-1}:=
Y\left(e^{zL(1)}\left(-z^{-2}\right)^{L(0)}a,z^{-1}\right)$. 
We refer to $\langle\ ,\ \rangle$ as the Li-Zamolodchikov (Li-Z) metric  \cite{MT4}.
For a $V$-basis $\{b\}$, we let $\{\overline{b}\}$ denote the Li-Z dual basis.
If $a\in V_{k}$ is quasi-primary, then
$\langle a(n)b,c \rangle =\langle b,a^{\dagger}(n)c \rangle$  with 
\begin{align*}
a^{\dagger}(n)= (-1)^ka(2k-n-2).
\end{align*}
Thus $a^{\dagger}(n)=-a(-n)$ for $a\in V_{1}$ and $L^\dagger(n)=L(-n)$.

Define the genus zero $n$-point correlation function for $v_{i}$ inserted at $y_{i}$ for $i\in\{1,\ldots,n\}$
\begin{align*}
Z^{(0)}(\bm{v,y}):=Z^{(0)}(v_{1},y_{1};\ldots;v_{n},y_{n})=\langle \vac,\bm{Y(v,y)}\vac\rangle,
\end{align*}
where  $\langle\cdot,\cdot\rangle$ is the Li-Z metric and $\bm{Y(v,y)}:= Y(v_{1},y_{1}) \ldots Y (v_{n},y_{n})$. 
$Z^{(0)}(\bm{v,y})$  is a rational function of $y_{i}$ e.g.   \cite{FHL,Z, TW}. 

\subsection{Heisenberg genus zero correlation functions}
In order to later apply the MacMahon Master Theorems~\ref{theor:MMT}--\ref{theor:MMTsubpperm}, 
we consider the rank two Heisenberg VOA $M^{2}$ generated by two weight 1 commuting Heisenberg vectors $h_{1}, h_{2}$. 
Define $h_{\pm}:=\frac{1}{\sqrt{2}}(h_{1}\pm \im h_{2})$ with non-trivial commutator relation
\begin{align}
[h_{+}(k),h_{-}(l)]=k\delta_{k,-l},\quad k,l\in\Z.
\label{eq:Heiscom}
\end{align}
$M^{2}$ has a Fock basis with elements 
\begin{align}
h(\bi_{+},\bi_{-}):=h_{+}{(-1)}^{r_{+}(1)} h_{+}{(-2)}^{r_{+}(2)} \ldots h_{-}{(-1)}^{r_{-}(1)} h_{-}{(-2)}^{r_{-}(2)}\ldots  \vac,
\label{eq:Fock}
\end{align}
labelled by a pair of  multisets 
\begin{align*}
\bi_{+}&=\{1^{r_{+}(1)}2^{r_{+}(2)}\ldots i_{+}^{r_{+}(i_{+})}\ldots \},
\quad
\bi_{-}=\{1^{r_{-}(1)}2^{r_{-}(2)}\ldots i_{-}^{r_{-}(i_{-})}\ldots \},
\end{align*}
where $i_{+}$ occurs $r_{+}(i_{+})$ times in $\bi_{+}$ and $i_{-}$ occurs $r_{-}(i_{-})$ times in $\bi_{-}$. Where no ambiguity arises, we will omit the $\pm$ subscript in $r_{\pm}$. 
The Fock vector has conformal weight
\begin{align}
\label{eq:wthkl}
\wt(h(\bi_{+},\bi_{-}))=\sum_{i_{+}\in\bi_{+}}i_{+}r\left(i_{+}\right)+\sum_{i_{-}\in\bi_{-}}i_{-}r\left(i_{-}\right).
\end{align}
The Fock dual basis with respect to the Li-Z metric  has elements
\begin{align}
\overline{h}{(\bi_{+},\bi_{-})} = \left(
\prod_{i_{+}\in\bi_{+}} \prod_{i_{-}\in\bi_{-}}
\left(\frac{-1}{i_{+}}\right)^{r\left(i_{+}\right)}
\left(\frac{-1}{i_{-}}\right)^{r\left(i_{-}\right)}
\frac{1}{r\left(i_{+}\right)! r\left(i_{-}\right)!} \right)
 h{(\bi_{-},\bi_{+})}.
\label{eq:Fockdual}
\end{align}
The basic genus zero Heisenberg $2$-point function found by applying \eqref{eq:Comm} and \eqref{eq:Heiscom} is\footnote{\label{foot:conv} Here, and below, we adopt the standard convention that 
	$
	(x+y)^{\kappa}:=\sum_{m\ge 0}\binom{\kappa}{m}x^{\kappa-m}y^{m},
	$
	for any $\kappa$ i.e. we formally expand in the second parameter $y$.}
\begin{align}
Z^{(0)}(h_{+},x^{+};h_{-},x^{-})=\frac{1}{(x^{+}-x^{-})^2}.
\label{eq:2pt}
\end{align}
\eqref{eq:2pt} is fundamental to finding the Heisenberg partition and correlation functions on $\Sg$. The genus zero $2n$-point function for $h_{\pm}$ inserted at $x^{\pm}_{i}$ for 
$i\in \{ 1,\ldots,n\}$  is found by using \eqref{eq:Comm} for $h_{+}(k)$ with $k\ge 0$ and \eqref{eq:Heiscom}  via the recursion formula e.g.  \cite{Z,TW} 
\begin{align*}
Z^{(0)}(\bm{h_{\pm},x^{\pm}}) =&
\sum_{j=1}^{n}\frac{1}{(x^{+}_{1}-x^{-}_{j})^2}
Z^{(0)}(h_{+},x^{+}_{2};\ldots;\widehat{h_{-},x^{-}_{j}};\ldots),
\end{align*}
where  the $h_{-}$ insertion at $x^{-}_{j}$ is deleted. Repeating we find 
\begin{align}
Z^{(0)}(\bm{h_{\pm},x^{\pm}}) = 
\perm\frac{1}{(x^{+}_{i}-x^{-}_{j})^2}.\quad i,j\in\{ 1,\ldots,n\}.
\label{eq:2npt0}
\end{align}
Defining $\F^{(0)}(\bm{h_{\pm},x^{\pm}}) :=
Z^{(0)}(\bm{h_{\pm},x^{\pm}}) \bm{dx^{+}dx^{-}}$ with
$\bm{dx^{+}dx^{-}}:=\prod_{i=1}^{n}dx^{+}_{i }dx^{-}_{i} $, 
we may re-express \eqref{eq:2npt0} in terms of differential forms with 
\begin{align}
\F^{(0)}(\bm{h_{\pm},x^{\pm}}) =
\perm\omega^{(0)}(x^{+}_{i},x^{-}_{j}).
\label{eq:form2npt0}
\end{align}

$Z^{(0)}(\bm{h_{\pm},x^{\pm}})$ is a generating function for all genus zero correlation functions for $M^{2}$  in the sense that we may extract any genus zero correlation function as the coefficient of an appropriate expansion of  $Z^{(0)}(\bm{h_{\pm},x^{\pm}})$  e.g.  \cite{MT3,HT}. This follows from the general observation that for $a,b\in V$ we have by   \eqref{eq:Assoc} that
\begin{align}\notag
Z^{(0)}( \ldots;a(-k)b,w;\ldots)=&\coeff_{x^{k-1}}Z^{(0)}( \ldots;Y(a,x)b,w;\ldots)
\\
\label{eq:genrel}
=&\coeff_{x^{k-1}} Z^{(0)}( \ldots; a,x+w; b,w;\ldots),
\end{align}
where we may omit any $(x+w)^{M}$ factors when comparing rational functions.
Thus 
$Z^{(0)}(\ldots;{h}{(\bi_{+},\bi_{-})},w;\ldots )$ is the coefficient of 
$\prod_{i_{\pm}}\prod_{p=1}^{r\left(i_{+}\right)}\prod_{q=1}^{r(i_{-})}
(x^{+}_{i_{+},p})^{i_{+}-1}(x^{-}_{i_{-},q})^{i_{-}-1}
$
in  $Z^{(0)}\left( \ldots ;h_{+},x^{+}_{i_{+},p}+w; \ldots ;h_{-},x^{-}_{i_{-},q}+w;\ldots \right)$ 
 using $Y(\vac,w)=1$. 
We consider below genus zero correlation functions for vectors inserted at $w_{a}$ of \eqref{eq:sew} for $a\in\I$.
Let $y=x^{+}-w_{-a}$ and $z=x^{-}-w_{b}$ be local coordinates in the neighborhood of $w_{-a},w_{b}$. 
For the  two point function appearing in \eqref{eq:2pt} and \eqref{eq:2npt0} we define
\begin{align}
\Ah_{ab}(k,l)&=\coeff_{ y^{k-1} z^{l-1}} 
\frac{1}{(x^{+}-x^{-})^{2}}
=-\rho_{a}^{-k/2}\rho_{b}^{-l/2}\sqrt{kl}A_{ab}(k,l),
\label{eq:Ahat}
\end{align}
for the moment matrix  $A$ of  \eqref{eq:Adef}. 

\subsection{The genus $g$ partition function on $M^{2}$}
Let $\bm{b}=(b_{1},\ldots,b_{g})$ denote an element of a $V^{\otimes g}$-basis with Li-Z dual  $\bm{\overline{b}}=(\bbar_{1},\ldots,\bbar_{g})$ and consider the rational genus zero $2g$-point correlation function  for these vectors inserted at $w_{a}$ of \eqref{eq:sew}
\begin{align*}
Z^{(0)}(\bm{b,w}):=Z^{(0)}(\bbar_{1},w_{-1};b_{1},w_{1};\ldots;\bbar_{g},w_{-g};b_{g},w_{g}).
\end{align*}
We  define the genus $g$ partition function for a VOA $V$ by  \cite{TW} 
\begin{align}
\label{eq:ZVg}
Z_{V}^{(g)}(\bm{w,\rho})
:=\sum_{\bm{b}\in V^{\otimes g}}\bm{\rho^{\wt(b)}}Z^{(0)}(\bm{b,w}),
\end{align}
where $\bm{w,\rho}:=w_{\pm 1},\rho_{1},\ldots ,w_{\pm g},\rho_{g}$ and
$\bm{\rho^{\wt(b)}}:=\prod_{1\le a\le g}\rho_{a}^{\wt(b_{a})}$ for Schottky  parameters  $\rho_{a}$ of \eqref{eq:sew}. In general $Z_{V}^{(g)}(\bm{w,\rho})$ is a formal series in $\bm{\rho}$ but with convergent coefficients.
\begin{proposition}
	\label{prop:ZM}
	The  genus $g$ partition function for the Heisenberg VOA $M^{2}$  is
	\[
	Z_{M^{2}}^{(g)}(\bm{w,\rho})=\det(I-A)^{-1}, 
	\]
	for the moment matrix $A$ of \eqref{eq:Adef}.
\end{proposition}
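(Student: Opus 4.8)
The plan is to expand the defining sum \eqref{eq:ZVg} over the Heisenberg Fock basis, reduce each genus zero correlator to a permanent of the moment matrix by coefficient extraction, and then recognise the resulting series as an instance of the MacMahon Master Theorem~\ref{theor:MMT}. Concretely, I would write each basis vector as $b_a=h(\bi_+^{(a)},\bi_-^{(a)})$, so that the $V^{\otimes g}$-sum becomes a sum over the $2g$ multisets $\bi_\pm^{(a)}$, $a\in\Ip$. With $\bbar_a$ inserted at $w_{-a}$ and $b_a$ at $w_a$, the relation \eqref{eq:genrel} replaces each mode $h_\pm(-k)$ by a coefficient extraction of a field $h_\pm$ inserted near the corresponding base point. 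Since the bare correlator of the $h_\pm$ fields is the permanent \eqref{eq:2npt0}, and coefficient extraction distributes over each factor $1/(x^+-x^-)^2$ of a permanent term (each variable occurring in a single factor), the extracted correlator is exactly a permanent of the coefficient matrix $\Ah$ of \eqref{eq:Ahat}.

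The central bookkeeping step is to label each $h_+$ (resp.\ $h_-$) insertion by a pair $(c,k)\in\I\times\Np$, where $c$ records its base point $w_{-c}$ (resp.\ $w_c$) and $k$ its mode order. Reading off the insertions coming from $b_a$ at $w_a$ and from $\bbar_a$ at $w_{-a}$, and using that the Li-Z dual \eqref{eq:Fockdual} interchanges the roles of $h_+$ and $h_-$, I would check that the multiset $\bR$ of $h_+$-labels and the multiset $\bm{C}$ of $h_-$-labels coincide: for $a\in\Ip$ both assign $\bi_+^{(a)}$ to the index $-a$ and $\bi_-^{(a)}$ to the index $a$. Hence the bare correlator is $\perm\Ah(\bR,\bR)$, the square form required by Theorem~\ref{theor:MMT}, and the $\bm{b}$-sum becomes a sum over a single multiset $\bR$ on $\I\times\Np$, the bijection being $(\bi_\pm^{(a)})_a\leftrightarrow\bR$.

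It then remains to match the scalar prefactors. Passing from $\Ah$ to $A$ by \eqref{eq:Ahat} extracts, per row and per column, a factor $\rho_c^{-k/2}\sqrt{k}$ together with an overall sign; the weight factor $\bm{\rho^{\wt(b)}}$ read off from \eqref{eq:wthkl} contributes $\prod_{(c,k)\in\bR}\rho_c^{k}$, cancelling the $\rho$-powers, while the rational and factorial prefactors of the dual coefficients \eqref{eq:Fockdual} cancel the $\sqrt{k}$ and sign contributions and supply exactly the symmetric factor $1/\br(\bR)!$. The summand collapses to $\perm A(\bR,\bR)/\br(\bR)!$, so Theorem~\ref{theor:MMT} yields $\det(I-A)^{-1}$, which is well defined and holomorphic on $\Cg$ by Theorem~\ref{theor:detIA}. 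I expect the main obstacle to be the bookkeeping of the second paragraph—tracking the base point and mode order attached to each $h_\pm$ insertion and confirming $\bR=\bm{C}$—rather than the final cancellation, which is purely mechanical once the labels are fixed.
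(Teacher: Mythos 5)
Your proposal is correct and follows essentially the same route as the paper: expand \eqref{eq:ZVg} over the Fock basis, extract coefficients via \eqref{eq:genrel} to reduce each genus zero correlator to $\perm\Ah(\bi_a,\bi_a)$, absorb the weight and dual-basis prefactors to obtain $\sum_{\bi_a}\perm A(\bi_a,\bi_a)/\brfact$, and invoke Theorem~\ref{theor:MMT}; your explicit check that the $h_+$ and $h_-$ label multisets coincide is exactly the bookkeeping the paper leaves implicit in writing $\Ah(\bi_a,\bi_a)$. The only detail you gloss is the paper's truncation argument --- applying the MMT order by order in $\bm{\rho}$ (since $A$ is infinite and $Z^{(g)}_{M^2}$ is a priori a formal series) and then passing to all orders via the convergence in Theorem~\ref{theor:detIA}, which you correctly cite for well-definedness.
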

\begin{proof}
Here we sum over the Fock basis vectors $b_{a}=h(\bi_{a},\bi_{-a})$  labelled by $2g$ multisets
\begin{align*}
\bi_{a}:=\{\ldots i_{a}^{r(i_{a})}\ldots \}, \quad a\in\I,
\end{align*} 
i.e. $i_{a}$ occurs $r(i_{a})$ times in $\bi_{a}$.
It follows from \eqref{eq:wthkl} and   \eqref{eq:Fockdual} that
\begin{align*}
\bm{\rho^{\wt(b)}}Z^{(0)}(\bm{b,w})=&\frac{M(\bm{\rho},\bi_{a})}{\brfact}
 Z^{(0)}\left(\ldots ;h(\bi_{a},\bi_{-a}),w_{a};\ldots \right),
\end{align*}
where 
\begin{align}
\label{eq:M}
\brfact:=\prod_{a\in\I}\prod_{i_{a}}r(i_{a})!,
\quad
M(\bm{\rho},\bi_{a}):=
\prod_{a\in\I}\prod_{i_{a}}
\left(\frac{-\rho_{a}^{i_{a}}}{i_{a}}\right)^{r(i_{a})}. 
\end{align}
$Z^{(0)}\left(\ldots ;h(\bi_{a},\bi_{-a}),w_{a};\ldots \right)$ is labelled by 
$\bi_{a}$ for $ a\in\I$ and  is determined by the coefficient of an appropriate expansion of the generating function \eqref{eq:2npt0} as described above. Hence
\begin{align*}
 Z^{(0)}\left(\ldots ;h(\bi_{a},\bi_{-a}),w_{a};\ldots \right)
 =\perm \Ah\left(\bi_{a},\bi_{a}\right),
\end{align*}
for $\Ah$ of \eqref{eq:Ahat} where  $\Ah\left(\bi_{a},\bi_{a}\right)$ is an $N\times N$  matrix  for $N=\sum_{a\in\I}\sum_{i_{a}}r(i_{a})$.
The   $-\rho_{a}^{i_{a}}/{i_{a}}$ multiplicative factors in $M(\bm{\rho},\bi_{a})$ can be absorbed into the permanent to obtain 
\begin{align*}
Z_{M^{2}}^{(g)}(\bm{w,\rho})=\sum_{\bi_{a}}\frac{\perm A\left(\bi_{a},\bi_{a}\right)}{\brfact},
\end{align*}
for moment matrix $A$ and where the sum is taken over all multisets $\bi_{a}$.
We may truncate  the formal series $Z_{M^{2}}^{(g)}(\bm{w,\rho})$ to any finite order in $\bm{\rho}$ and apply the MacMahon Master Theorem ~\ref{theor:MMT} to find that  $Z_{M^{2}}^{(g)}(\bm{w,\rho})=\det(I-A)^{-1}$ to any finite order in $\bm{\rho}$ and therefore to all orders since $\det(I-A)^{-1}$ is convergent by Theorem~\ref{theor:detIA}. Hence the result follows. 
\end{proof}
From the definition \eqref{eq:ZVg} we have  $Z_{U\otimes V}^{(g)}=Z_{U}^{(g)}Z_{V}^{(g)}$ for any two VOAs $U$ and $V$  \cite{TW}. Thus we have  $Z_{M^{2}}^{(g)}=(Z_{M}^{(g)})^{2}$ so that  
\begin{align}
\label{eq:ZM1g}
Z_{M}^{(g)}=\det(I-A)^{-\half}.
\end{align}
\eqref{eq:ZM1g} generalizes results of  \cite{MT4, MT5} for genus 2. 
 In general,  $Z_{V}^{(g)}$  is M\"obius invariant by Proposition~4.3 of   \cite{TW}.  Thus together with  Theorem~\ref{theor:detIA} it follows that 
\begin{corollary}
$Z_{M^{2}}^{(g)}(\bm{w,\rho})=\det(I-A)^{-1}$ is holomorphic on  Schottky space $\Schg$.
\end{corollary}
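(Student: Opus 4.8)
The plan is to assemble three facts that have already been established and then transfer holomorphy from the parameter space $\Cg$ to the quotient $\Schg=\Cg/\SL_{2}(\C)$. First I would invoke Proposition~\ref{prop:ZM} to replace the object of interest by its closed form $Z_{M^{2}}^{(g)}(\bm{w,\rho})=\det(I-A)^{-1}$. Next I would appeal to Theorem~\ref{theor:detIA}, which asserts that $\det(I-A)$ is holomorphic and non-vanishing on $\Cg$; since the reciprocal of a nowhere-vanishing holomorphic function is holomorphic, this immediately gives that $\det(I-A)^{-1}$, and hence $Z_{M^{2}}^{(g)}$, is holomorphic on all of $\Cg$.

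The remaining task is to descend this holomorphy to $\Schg$. Here I would use the M\"obius invariance of $Z_{V}^{(g)}$ recorded just above the statement (Proposition~4.3 of \cite{TW}): $Z_{M^{2}}^{(g)}$ is constant along the $\SL_{2}(\C)$-orbits in $\Cg$, so it factors as $Z_{M^{2}}^{(g)}=\widetilde{Z}\circ\pi$ through the projection $\pi\colon\Cg\to\Schg$. By the definition of the complex structure on Schottky space, $\pi$ is a holomorphic submersion admitting local holomorphic sections $s\colon U\to\Cg$ over open sets $U\subset\Schg$; composing with such a section gives $\widetilde{Z}|_{U}=Z_{M^{2}}^{(g)}\circ s$, which is holomorphic because $Z_{M^{2}}^{(g)}$ is. Invariance guarantees independence of the chosen section, so the local pieces glue to a single holomorphic function $\widetilde{Z}$ on $\Schg$, as claimed.

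I expect the only genuine subtlety to lie in this descent step rather than in holomorphy on $\Cg$, which is essentially immediate from Theorem~\ref{theor:detIA}. Concretely, one must ensure that $\Schg$ really carries a complex-manifold structure for which $\pi$ is holomorphic and locally split; this is the standard fact that the holomorphic $\SL_{2}(\C)$-action on marked Schottky configurations makes $\Schg$ a complex manifold of dimension $3g-3$. If one prefers to avoid any appeal to the manifold structure, the cleanest alternative is to observe that holomorphy on $\Cg$ together with $\SL_{2}(\C)$-invariance is precisely what it means for the induced function on $\Schg$ to be holomorphic, so the corollary amounts to combining the two already-proved inputs (Proposition~\ref{prop:ZM} with Theorem~\ref{theor:detIA}, and the M\"obius invariance of \cite{TW}) with no further computation.
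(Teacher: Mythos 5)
Your proposal is correct and follows essentially the same route as the paper, which likewise combines Proposition~4.3 of \cite{TW} (M\"obius invariance of $Z_{V}^{(g)}$) with Theorem~\ref{theor:detIA} (holomorphy and non-vanishing of $\det(I-A)$ on $\Cg$) to descend to $\Schg=\Cg/\SL_{2}(\C)$; the paper simply states this combination without spelling out the quotient argument you elaborate. Your extra detail on local sections of $\pi\colon\Cg\to\Schg$ is a legitimate filling-in of that implicit step rather than a different approach.
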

\subsection{Montonen-Zograf product formula}
$\det(I-A)$ can be expressed in terms of an infinite product formula originally discovered by Montonen in 1974  \cite{Mo} (see  \cite{DV} also). This formula was subsequently found in the holomorphic part of a Laplacian determinant formula on $\Sg$ by Zograf  \cite{Z} related to earlier work of D'Hoker and Phong  \cite{DP}.

$\gamma_{p}\in\Gamma $ (the Schottky group)  is called primitive if $\gamma_{p}\neq \gamma^{m}$ for any $m>1$ and $\gamma\in \Gamma$. 
Thus  $\gamma=\gamma_{p}^{m}$ for some primitive $\gamma_{p}$ and  some $m\ge 1$ for every $\gamma\in\Gamma$. We then have
\begin{theorem}
\[	
\det(I-A)=\prod_{k\ge 1}\prod_{\gamma_{p}}(1-q_{\gamma_{p}}^{k})
,\]
where $\gamma_{p}$ is summed over representatives of the primitive conjugacy classes of $\Gamma$ excluding the identity and  $q_{\gamma_{p}}$ is the multiplier of $\gamma_{p}$.
\end{theorem}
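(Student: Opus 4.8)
The plan is to argue entirely from the defining relation $\log\det(I-A)=-\sum_{n\ge1}\frac1n\Tr A^{n}$ stated just above Theorem~\ref{theor:detIA}, and to match it, after resummation, with the logarithm of the claimed product,
\[
\log\prod_{k\ge1}\prod_{\gamma_{p}}\bigl(1-q_{\gamma_{p}}^{k}\bigr)=-\sum_{\gamma_{p}}\sum_{k\ge1}\sum_{j\ge1}\frac1j\,q_{\gamma_{p}}^{kj}.
\]
Every manipulation is a formal identity in the power series in $\bm{\rho}$ whose convergence on $\Cg$ is guaranteed by Theorem~\ref{theor:detIA}, so it suffices to prove the two sides agree coefficientwise.

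First I would expand the trace over the double index $(a,k)$ with $a\in\I$ and $k\ge1$, cyclically modulo $n$:
\[
\Tr A^{n}=\sum_{a_{1},\dots,a_{n}\in\I}\ \sum_{k_{1},\dots,k_{n}\ge1}\ \prod_{i=1}^{n}A_{a_{i}a_{i+1}}(k_{i},k_{i+1}).
\]
By \eqref{eq:Adef}, $A_{ab}(k,l)$ vanishes precisely when $a=-b$, so the surviving index strings satisfy $a_{i+1}\neq-a_{i}$ for all $i$ cyclically; these are exactly the cyclically reduced words $w=\gamma_{a_{1}}\cdots\gamma_{a_{n}}\in\Gamma$ of length $n$.

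The key step is to perform the moment sums $\sum_{k_{1},\dots,k_{n}}$ for a fixed cyclically reduced string. Using the contour-integral form of $A_{ab}(k,l)$ in \eqref{eq:Adef}, I would show that summing over each shared index $k_{i}$ implements the Schottky identification $z'=\gamma_{a_{i}}z$ of \eqref{eq:sew}: the factor $\rho_{a_{i}}^{k_{i}}z^{-k_{i}}$ acts as a reproducing kernel gluing the $y$-contour of one factor to the $x$-contour of the next, so that the cyclic product collapses to the trace of the composition (transfer) operator $C_{w}$, $(C_{w}f)(z)\,dz:=f(wz)\,d(wz)$, acting on weight-one forms holomorphic at the attracting fixed point of $w$. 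Since $w$ is conjugate to $z\mapsto q_{w}z$, the forms $z^{m-1}dz$ ($m\ge1$) diagonalize $C_{w}$ with eigenvalue $q_{w}^{m}$, giving
\[
\sum_{k_{1},\dots,k_{n}\ge1}\ \prod_{i=1}^{n}A_{a_{i}a_{i+1}}(k_{i},k_{i+1})=\Tr C_{w}=\sum_{m\ge1}q_{w}^{m}=\frac{q_{w}}{1-q_{w}}.
\]
This localization to the multiplier is the main obstacle: it is the only place the Schottky geometry enters, and justifying rigorously that ``gluing contours equals composing M\"obius maps'' (interchanging the $k_{i}$-sums with the contour integrals and controlling convergence) is the substantive analytic work. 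I would verify it first in genus one, where $\I=\{\pm1\}$ forces $a_{1}=\dots=a_{n}$ and a direct resummation of $A_{11}(k,l)$ yields $q^{n}/(1-q^{n})$, before treating general words.

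Finally I would reorganize the sum over cyclically reduced strings into a product over primitive conjugacy classes by a standard necklace count. A string of length $n$ with minimal cyclic period $\ell$ satisfies $w=\gamma_{0}^{\,n/\ell}$ for a primitive cyclically reduced $\gamma_{0}$ of length $\ell$, and its rotation orbit consists of exactly $\ell$ strings, all representing conjugate words with common multiplier $q_{w}=q_{\gamma_{0}}^{\,n/\ell}$. Since cyclically reduced representatives of a conjugacy class in the free group $\Gamma$ are unique up to rotation, primitive necklaces correspond bijectively to primitive conjugacy classes $\gamma_{p}$. Balancing the orbit factor $\ell$ against the weight $\frac1n$ and writing $d=n/\ell$ gives
\[
\sum_{n\ge1}\frac1n\Tr A^{n}=\sum_{\gamma_{p}}\sum_{d\ge1}\frac1d\,\frac{q_{\gamma_{p}}^{d}}{1-q_{\gamma_{p}}^{d}}=-\sum_{\gamma_{p}}\sum_{k\ge1}\log\bigl(1-q_{\gamma_{p}}^{k}\bigr),
\]
using the elementary identity $\sum_{d\ge1}\frac1d\frac{t^{d}}{1-t^{d}}=-\sum_{k\ge1}\log(1-t^{k})$. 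Negating and invoking the definition of $\log\det(I-A)$ then yields $\det(I-A)=\prod_{k\ge1}\prod_{\gamma_{p}}(1-q_{\gamma_{p}}^{k})$, as claimed.
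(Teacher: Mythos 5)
Your proposal is correct and follows essentially the same route as the paper: it likewise expands $\Tr A^{n}$ over cyclically reduced words (via the vanishing $A_{ab}(k,l)=0$ for $a=-b$), collapses the moment sums by composing M\"obius maps --- your transfer operators $C_{w}$ are exactly the paper's matrices $D(\gamma)$ of the Appendix in the basis $z^{k-1}dz$, with $A_{ab}(k,l)=D_{kl}(\mu_{a}\lambda_{b}^{-1})$ and $\gamma_{a}=\lambda_{a}^{-1}\mu_{a}$ --- and performs the identical necklace count over primitive conjugacy classes using uniqueness of cyclically reduced representatives up to rotation. The analytic step you flag as the main obstacle is precisely what Lemma~\ref{lem:Drep}~(iii) supplies (modulo checking the conditions $\gamma(0)\neq\infty$ along the products, as the paper's footnote notes), so your outline matches the paper's proof in all essentials.
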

\begin{proof}
The proof is a variation of arguments in   \cite{Mo} and   \cite{DV}.	
Recall that $\log\det(I-A)=-\sum_{n\ge 1}\frac{1}{n}\Tr A^{n}$. Using \eqref{eq:aA}, of the Appendix, we have
\begin{align*}
\Tr A^{n}&=\sum'_{a_{1},\ldots,a_{n}\in\I}\Tr \left( D(\mu_{a_{1}}\lambda_{a_{2}}^{-1})
D(\mu_{a_{2}}\lambda_{a_{3}}^{-1})\ldots D(\mu_{a_{n}}\lambda_{a_{1}}^{-1})\right),
\end{align*}
for $\lambda_{a},\mu_{a}$ of \eqref{eq:lammu} where the prime indicates that $-a_{i}\neq a_{i+1}$ for $i=1,\ldots,n-1$ and $-a_{n}\neq a_{1}$. Note that the summand trace is over the integer labels of the  $D(\gamma)$ matrices only. From Lemma~\ref{lem:Drep}~(iii) and \eqref{eq:gammaSchot} it follows that\footnote{One has to check that the conditions of Lemma~\ref{lem:Drep} are satisfied.}
\begin{align*}
\Tr A^{n}&=\sum'_{a_{1},\ldots,a_{n}\in\I}\Tr D\left(\lambda_{a_{1}}^{-1}\mu_{a_{1}}\lambda_{a_{2}}^{-1}
\mu_{a_{2}}\ldots \lambda_{a_{n}}^{-1}\mu_{a_{n}}\right)
\\
&=\sum'_{a_{1},\ldots,a_{n}\in\I}
\Tr D
\left(\gamma_{a_{1}}\gamma_{a_{2}}\ldots \gamma_{a_{n}}
\right)
= \sum_{\gamma\in \Gamma_{n}^{\text{CR}}}\Tr D(\gamma),
\end{align*}
 where $\Gamma_{n}^{\text{CR}}$ is the set of Cyclically Reduced words of length $n$ in $\Gamma$ i.e.  reduced words $\gamma_{a_{1}}\ldots \gamma_{a_{n}}$ for which $\gamma_{a_{1}}^{-1}\equiv\gamma_{-a_{1}}\neq \gamma_{a_{n}}$.  
For $\gamma\in \Gamma_{n}^{\text{CR}}$ we have $\gamma=\gamma_{p}^{m}$ for some primitive cyclically reduced word $\gamma_{p}$ and some $m\ge 1$ where $m|n$. 
Every element of  $\Gamma$ is conjugate to a cyclically reduced word and any two cyclically reduced words are conjugate if and only if they are cyclic permutations of each other e.g. Prop.~9 of  \cite{C}. 
Then it follows that there are $n/m$ cyclically reduced words conjugate to $\gamma_{p}^{m}$. Therefore we find
\begin{align*}
\sum_{n\ge 1}\frac{1}{n}\Tr A^{n}=\sum_{\gamma_{p}}\sum_{m\ge 1}\frac{1}{m}\Tr D(\gamma_{p}^{m}),
\end{align*}
where $\gamma_{p}$ ranges over representatives of the primitive conjugacy classes  of $\Gamma$ excluding the identity. 
$\gamma_{p} $ is conjugate in $\SL(2,\C)$ to $\diag(q_{\gamma_p}^{1/2},q_{\gamma_{p}}^{-1/2})$ for multiplier $q_{\gamma_{p}}$.
From \eqref{eq:Dkl} we thus find  $\Tr \left(D_{kl}(\gamma_{p}^{m})\right)=\Tr \left( \delta_{kl}q_{\gamma_{p}}^{mk}  \right)=\sum_{k\ge 1}q_{\gamma_{p}}^{mk}$  so that
\begin{align*}
\log\det(I-A)&=-\sum_{k\ge 1}\sum_{\gamma_{p}}\sum_{m\ge 1}\frac{1}{m}q_{\gamma_{p}}^{mk}=\sum_{k\ge 1}\sum_{\gamma_{p}}\log(1-q_{\gamma_{p}}^{k}).
\end{align*}
\end{proof}
\subsection{Genus $g$ correlation functions on $M^{2}$}
We define genus $g$  formal $n$-point correlation differential forms for $n$ vectors $v_{1},\ldots,v_{n}\in V$ inserted at $y_{1},\ldots,y_{n}$ by  \cite{TW}
\begin{align}\label{eq:GenusgnPoint}
\F_{V}^{(g)}(\bm{v,y}):=
\sum_{\bm{b}\in V^{\otimes g}}\bm{\rho^{\wt(b)}}Z^{(0)}(\bm{v,y};\bm{b,w})\bm{dy^{\wt(v)}},
\end{align}
where $
Z^{(0)}(\bm{v,y};\bm{b,w})=Z^{(0)}(v_{1},y_{1};\ldots;v_{n},y_{n};\bbar_{1},w_{-1};b_{1},w_{1};\ldots;\bbar_{g},w_{-g};b_{g},w_{g})$.  Consider  $\F_{M^{2}}^{(g)}(\bm{\bm{h_{\pm},y^{\pm}}})$ for Heisenberg generators $h_{\pm}$ inserted at $y_{r}^{\pm}$ for $r\in \{1,\ldots,m\}$ which is the generating function for all correlation functions just as \eqref{eq:form2npt0} is at genus zero. 
\begin{proposition}
	\label{prop:ZMnptg}
The genus $g$ generating function for  $M^{2}$ is given by 
\begin{align*}
\F_{M^{2}}^{(g)}(\bm{\bm{h_{\pm},y^{\pm}}})=
\frac{\perm \omega\left(y^{+}_{r},y^{-}_{s}\right) } {\det(I-A)},\quad r,s\in\{ 1,\ldots ,m\},
\end{align*} 
for bidifferential form $\omega(x,y)$ of \eqref{eq:omegag}.
\end{proposition}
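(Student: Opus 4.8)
The plan is to mirror the computation of Proposition~\ref{prop:ZM}, but now to retain the $m$ external insertions $h_{+}$ at $y^{+}_{r}$ and $h_{-}$ at $y^{-}_{s}$ as a distinguished block and to invoke the Submatrix MMT (Theorem~\ref{theor:MMTsub}) in place of the plain MMT. First I would start from the definition \eqref{eq:GenusgnPoint} and sum over the same $2g$ Fock vectors $b_{a}=h(\bi_{a},\bi_{-a})$, $a\in\I$, used in Proposition~\ref{prop:ZM}. The genus zero correlation function $Z^{(0)}(\bm{h_{\pm},y^{\pm}};\bm{b,w})$ now carries $m+N$ insertions of $h_{+}$ and $m+N$ of $h_{-}$, where $N=\sum_{a\in\I}\sum_{i_{a}}r(i_{a})$ counts the internal modes, and by \eqref{eq:2npt0} it equals the permanent of the matrix of two point functions $1/(x^{+}-x^{-})^{2}$. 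Extracting the Fock modes located at $w_{a}$ via the coefficient relation \eqref{eq:genrel}, exactly as in Proposition~\ref{prop:ZM}, this permanent acquires the block form of \eqref{eq:BAk}, the rows and columns being split into the $m$ external legs and the $N$ internal modes.

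Next I would identify the four blocks. The external--external block $B$ has entries the genus zero two point functions of $h_{+}$ at $y^{+}_{r}$ and $h_{-}$ at $y^{-}_{s}$, namely $\om{0}{}(y^{+}_{r},y^{-}_{s})$ after restoring the differential $dy$ factors, as in \eqref{eq:form2npt0}. The internal--internal block is, after absorbing the $-\rho_{a}^{i_{a}}/i_{a}$ weights from $M(\bm{\rho},\bi_{a})$ in \eqref{eq:M} and using \eqref{eq:Ahat}, precisely the moment matrix $A$ of \eqref{eq:Adef} restricted to the multiset $\bi_{a}$, exactly as in Proposition~\ref{prop:ZM}. The external--internal blocks $U(\bi)$ and $V(\bi)$ arise from two point functions pairing an external $h_{+}$ (resp.\ $h_{-}$) with an internal mode; expanding $1/(y^{+}_{r}-x^{-})^{2}$ (resp.\ $1/(x^{+}-y^{-}_{s})^{2}$) about the sewing point $w_{a}$ and comparing with \eqref{eq:Lkdef} shows that, after the same weight absorption, these blocks assemble into the row vectors $L(y^{+}_{r})$ and the column vectors $R(y^{-}_{s})$.

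With the blocks in place, I would apply Theorem~\ref{theor:MMTsub} term by term in the formal $\bm{\rho}$ expansion, just as the plain MMT was applied in Proposition~\ref{prop:ZM}, to obtain
\begin{align*}
\F_{M^{2}}^{(g)}(\bm{h_{\pm},y^{\pm}})=\frac{\perm\widetilde{B}}{\det(I-A)},\qquad \widetilde{B}=B+U(I-A)^{-1}V,
\end{align*}
valid to all orders since $\det(I-A)^{-1}$ is convergent on $\Cg$ by Theorem~\ref{theor:detIA}. Finally I would read off $\widetilde{B}_{rs}=\om{0}{}(y^{+}_{r},y^{-}_{s})-L(y^{+}_{r})(I-A)^{-1}R(y^{-}_{s})$ and recognise the right hand side, by Proposition~\ref{prop:omgsew} and \eqref{eq:omgsew}, as $\omega(y^{+}_{r},y^{-}_{s})$, which yields the claimed permanent.

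The hardest part will be the bookkeeping that turns the off diagonal blocks into $L$ and $R$ with the correct normalisation and sign: one must check that the $\rho_{a}^{k/2}$ and $\sqrt{k}$ factors built into \eqref{eq:Lkdef} and \eqref{eq:Adef}, together with the $-\rho_{a}^{i_{a}}/i_{a}$ and $1/r(i_{a})!$ weights coming from \eqref{eq:Fockdual} and \eqref{eq:M}, combine so that $U(I-A)^{-1}V$ reproduces exactly the minus sign in $\omega=\om{0}{}-L(I-A)^{-1}R$ of \eqref{eq:omgsew}. The remaining care is precisely that of Proposition~\ref{prop:ZM}: the identity is established order by order as a formal series in $\bm{\rho}$ and then promoted to an analytic identity on $\Cg$ using the convergence furnished by Theorem~\ref{theor:detIA}.
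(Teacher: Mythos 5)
Your proposal is correct and follows essentially the same route as the paper: sum over the Fock basis as in Proposition~\ref{prop:ZM}, organise the genus zero permanent into the block form \eqref{eq:BAk} with blocks $B,U,V,A$ as in \eqref{eq:BUVhat}--\eqref{eq:BUV}, apply the Submatrix MMT (Theorem~\ref{theor:MMTsub}), and identify $\widetilde{B}(r,s)=\om{0}{}(y^{+}_{r},y^{-}_{s})-L(y^{+}_{r})(I-A)^{-1}R(y^{-}_{s})=\omega(y^{+}_{r},y^{-}_{s})$ via Proposition~\ref{prop:omgsew}. Your flagged bookkeeping concern is exactly where the paper puts the sign, defining $V(i_{a},s):=-R_{a}(i_{a},y_{s}^{-})$ so that $U(I-A)^{-1}V$ carries the minus sign of \eqref{eq:omgsew}.
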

\begin{proof}
The $\bm{b}$ sum of \eqref{eq:GenusgnPoint} is taken over $h(\bi_{a},\bi_{-a})$ with summand
\begin{align*}
&\bm{\rho^{\wt(b)}}Z^{(0)}(\bm{\bm{h_{\pm},y^{\pm}};b,w})=
\frac{M(\bm{\rho},\bi_{a})}{\brfact}X(\bi_{a}),
\end{align*}
for $X(\bi_{a})=Z^{(0)}\left(\ldots;h_{+},y^{+}_{r};\ldots;h_{-},y^{-}_{s} \ldots ;h(\bi_{a},\bi_{-a}),w_{a};\ldots \right)$ determined by an expansion of
the generating function \eqref{eq:2npt0} given by
\begin{align*}
X(\bi_{a})=\perm 
\begin{bmatrix}
\Bh&  \Uh(\bi_{a})\\ 
\Vh(\bi_{a}) & \Ah\left(\bi_{a},\bi_{a}\right)
\end{bmatrix},
\end{align*}
	for $\Ah$ of \eqref{eq:Ahat} and where for $r,s\in \{1,\ldots,m\}$, $ i_{a}\in \bi_{a}$, $ j_{b}\in \bi_{b}$ we define
\begin{align}
\label{eq:BUVhat}
\Bh(r,s):=\frac{1}{(y^{+}_{r}-y^{-}_{s})^{2}},\quad \Uh(r,j_{b}):=\frac{j_{b}}{(y^{+}_{r}-w_{b})^{j_{b}+1}},
\quad \Vh(i_{a},s):=\frac{i_{a}}{(y^{-}_{s}-w_{-a})^{i_{a}+1}}.
\end{align}
The additional  $-\rho_{a}^{i_{a}}/{i_{a}}$ and $\bm{dy^{+}_{r}dy^{-}_{s}}$ factors can be absorbed into the permanent to obtain 
\begin{align*}
\F_{M^{2}}^{(g)}(\bm{\bm{h_{\pm},y^{\pm}}})=\sum_{\bi_{a}}\frac{1}{\brfact}
\perm
\begin{bmatrix}
B&  U(\bi_{a})\\ 
V(\bi_{a}) & A\left(\bi_{a},\bi_{a}\right)
\end{bmatrix},
\end{align*}
where  with  $L,R$ of \eqref{eq:Lkdef} we define
\begin{align}
\label{eq:BUV}
B(r,s):=\omega^{(0)}(y^{+}_{r},y^{-}_{s}), \quad
U(r,j_{b}):=L_{a}(j_{b},y_{r}^{+}), \quad
V(i_{a},s):=-R_{a}(i_{a},y_{s}^{-}), 
\end{align}
Applying Theorem~\ref{theor:MMTsub} we find $
\widetilde{B}=B+U(I-A)^{-1}V$ is given by 
\begin{align}
\label{eq:Btilde}
\widetilde{B}(r,s)=\omega^{(0)}(y^{+}_{r},y^{-}_{s})-L(y^{+}_{r}) (I-A)^{-1}R(y^{-}_{r})
=\omega(y^{+}_{r},y^{-}_{s}),
\end{align}
by Proposition~\ref{prop:omgsew}. Thus the result holds.
\end{proof}

\section{The Heisenberg Generalized VOA on $\Sg$}
\subsection{The Heisenberg generalized VOA $\M^{2}$}
The Heisenberg VOA $M^{2}$, generated by $h_{\pm}$, has irreducible modules $M^{2}_{\alpha}=M^{2}\otimes e^{\alpha}$ for $\alpha\in \C^{2}$
where for $u\otimes e^{\alpha}\in M^{2}_{\alpha}$ 
\begin{align}
h_{\pm}(0)(u\otimes e^{\alpha}) =& \alpha_{\pm} (u \otimes e^{\alpha}),
\qquad 
h_{\pm}(n)(u\otimes e^{\alpha}) =  (h_{\pm}(n)u) \otimes e^{\alpha}, \ n\neq 0, 
\label{eq:alphan}
\end{align}
where 
$\alpha_{\pm}:=\frac{1}{\sqrt{2}}(\alpha_{1}\pm \im \alpha_{2})$.
In  \cite{TZ} an intertwiner vertex operator $\calY(u\otimes e^{\alpha},z)$ which creates  $u\otimes e^{\alpha}$ from $\vac$ is defined, similarly for  lattice vertex operators  \cite{FLM,Ka},  by
\begin{align*}
\calY(u\otimes e^{\alpha},z):=& e^{\alpha} Y_{-}(\alpha, z)  
Y(u ,z)Y_{+}(\alpha,z)z^{\alpha(0)},
\\
Y_{\pm}(\alpha, z) :=& \exp \left(\mp \;  \sum_{n>0}\frac{\alpha(\pm \; n)}{n} z^{\mp n}\right), 
\end{align*} 
where $\alpha(n):=\sum_{i=1}^{2}\alpha_{i} h_{i}(n)$. $e^{\alpha}$ is a twisted group algebra element for the abelian group $\C^{2}$. The twisted group algebra  is  associative with 2-cocycle $\varepsilon(\alpha,\beta)\in \C^{\times}$ where
\begin{align*}
e^{\alpha}e^{\beta}=\varepsilon(\alpha,\beta)e^{\alpha+\beta},\quad e^{0}=1,
\end{align*}  
so that $\varepsilon(\alpha,0)=\varepsilon(0,\alpha)=1$. 
Associativity implies that the commutator 
\begin{align*}
C(\alpha,\beta):=\varepsilon(\alpha,\beta)\varepsilon(\beta,\alpha)^{-1},
\end{align*}
 is skew-symmetric and multiplicatively bilinear. For a given commutator, we may choose cocycles  such that  $\varepsilon(\alpha,-\alpha)=1$  for all $\alpha\in\C^{2}$  \cite{TZ}.

 Define the vector space $\M^{2}:=\oplus_{\alpha\in \C^{2}}M^{2}_{\alpha}$. 
 For a given commutator $C(\alpha,\beta)$, the vertex operators on $\M^{2}$ form a generalized VOA\footnote{The generalized VOA is of a more general type than those described in  \cite{DL}.} 
with 
 $\wt\left( u\otimes e^{\alpha}\right)=\wt(u)+\alpha_{+}\alpha_{-}\in\C$ for the Heisenberg Virasoro vector  $\omega=h_{+}(-1)h_{-}$ \cite{BK,TZ}.   
  In particular, for all $u\in M^{2}$ and $v\otimes e^{\alpha}\in M^{2}_{\alpha}$ we obtain the following  natural commutator and associativity identities
\begin{align}
[u(k),\calY(v\otimes e^{\alpha},z)]=& \sum_{j\ge 0}\binom{k}{j}\calY(u(j)v\otimes e^{\alpha},z)z^{k-j},
\label{eq:Comm_alpha}
\\
(y+z)^{N} Y(u,y+z)\calY(v\otimes e^{\alpha},z) =& (y+z)^{N} \calY(Y(u,y)v\otimes e^{\alpha},z),\quad (N\gg 0),
\label{eq:Assoc_alpha}
\end{align}
where we identify $\calY(u\otimes e^{0},z)$ with $Y(u,z)$.

Let $\alpha^{1},\ldots,\alpha^{k}\in\C^{2}$ and consider the genus zero correlation function
\begin{align*}
Z^{(0)}(\bm{e^{\alpha},z}) := \langle \vac, \calY(e^{\alpha^{1}},z_{1})\ldots \calY(e^{\alpha^{k}},z_{k})\vac \rangle,
\end{align*}
abbreviating $\vac\otimes e^{\alpha}$ by  $e^{\alpha}$.
Using the standard lattice VOA identity\footnote{See footnote~\ref{foot:conv}}   \cite{FLM}
\begin{align*}
Y_{+}(\alpha,x)Y_{-}(\beta,y) =& \left(1-\frac{y}{x}  \right)^{\alpha\cdot\beta}Y_{-}(\beta,y) Y_{+}(\alpha,x),
\end{align*}
where we define 
$\alpha\cdot \beta:=\alpha_{+}\beta_{-}+\alpha_{-}\beta_{+}$ for all $\alpha,\beta \in \C^{2}$.
We find that 
$Z^{(0)}(\bm{e^{\alpha},z}) = 0$ for $\sum_{t=1}^{k}\alpha^{t}\neq 0$ whereas
for $\sum_{t=1}^{k}\alpha^{t}= 0$ we have 
\begin{align}
\label{eq:Z0alpha}
Z^{(0)}(\bm{e^{\alpha},z}) = \varepsilon_{\bm{\alpha}}\prod_{1\le t<u\le k}
(z_{t}-z_{u})^{\alpha^{t}\cdot \alpha^{u}},
\end{align}
where 
\begin{align}
\varepsilon_{\bm{\alpha}}:=\prod_{t=1}^{k-1}\varepsilon\left(\alpha^{t},\sum_{u=t+1}^{k}\alpha^{u}\right).
\label{eq:epsalpha}
\end{align}
For Heisenberg generators $h_{\pm}$ inserted at $x_{i}^{\pm}$ for $i\in \{1,\ldots,n\}$ and  $e^{\alpha^{t}}$ inserted at $z_{t}$ for $t\in \{1,\ldots,k\}$ where $\sum_{t=1}^{k}\alpha^{t}= 0$, 
consider the  genus zero correlation function
\begin{align}
\label{eq:Z0halpha}
Z^{(0)}(\bm{h_{\pm},x^{\pm}};\bm{e^{\alpha},z}) = 
\langle \vac, Y(h_{+},x_{1}^{+})\ldots Y(h_{-},x_{n}^{-})
\calY(e^{\alpha^{1}},z_{1})\ldots \calY(e^{\alpha^{k}},z_{k})\vac \rangle.
\end{align}
\eqref{eq:Z0halpha} is a generating function for all  $\M^{2}$ genus zero correlation functions much as $Z^{(0)}(\bm{h_{\pm},x^{\pm}})$ of \eqref{eq:2npt0} is for $M^{2}$. 
Using \eqref{eq:Comm}, \eqref{eq:Heiscom}, \eqref{eq:alphan} and \eqref{eq:Comm_alpha} we find
that 
\begin{align*}
Z^{(0)}(\bm{h_{\pm},x^{\pm}};\bm{e^{\alpha},z}) =
&
\sum_{t=1}^{k}\frac{\alpha_{+}^{t}}{x^{+}_{1}-z_{t}}
Z^{(0)}(h_{+},x^{+}_{2};\ldots;\bm{e^{\alpha},z}),
\\
&+
\sum_{j=1}^{n}\frac{1}{(x^{+}_{1}-x^{-}_{j})^2}
Z^{(0)}(h_{+},x^{+}_{2};\ldots;\widehat{h_{-},x^{-}_{j}};\ldots;\bm{e^{\alpha},z}).
\end{align*}
Repeating for each $h_{\pm}$  and using \eqref{eq:Z0alpha} we obtain a partial permanent   (cf.  \eqref{eq:pperm}) formula:
\begin{lemma}
	For $\sum_{t=1}^{k}\alpha^{t}= 0$ we have
	\begin{align} 
	Z^{(0)}(\bm{h_{\pm},x^{\pm}};\bm{e^{\alpha},z})  = 
	\varepsilon_{\bm{\alpha}}\prod_{1\le t<u\le k}
	(z_{t}-z_{u})^{\alpha^{t}\cdot \alpha^{u}}
	\;\pperm_{\,\theta^{-},\theta^{+}}\frac{1}{(x^{+}_{i}-x^{-}_{j})^2},
	\label{eq:2nptalpha}
	\end{align}
	where  
	\begin{align}
	\label{eq:thetapmi}
\theta^{\pm}_{i}&=\sum_{t=1}^{k}\frac{\alpha_{\pm}^{t}}{x^{\pm}_{i}-z_{t}},\quad 
i\in \{1,\ldots,n\}.
\end{align} 
\end{lemma}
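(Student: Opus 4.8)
The plan is to prove a more general \emph{rectangular} statement by induction on the number of Heisenberg insertions, recognising the recursion displayed just before the Lemma as the row-Laplace expansion of the partial permanent. First I would observe that the prefactor $\varepsilon_{\bm{\alpha}}\prod_{1\le t<u\le k}(z_{t}-z_{u})^{\alpha^{t}\cdot\alpha^{u}}$ is exactly $Z^{(0)}(\bm{e^{\alpha},z})$ of \eqref{eq:Z0alpha}. It depends only on the $\alpha^{t}$ and $z_{t}$, and since neither the commutator recursion nor the reductions below disturb the ordering of the $\calY(e^{\alpha^{t}},z_{t})$, this factor (including the cocycle $\varepsilon_{\bm{\alpha}}$) is common to every term. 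Dividing it out, it suffices to match the remaining rational function of the $x^{\pm}$ with the partial permanent.

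Because the recursion lowers the number of $h_{+}$ insertions while leaving the $h_{-}$ count unchanged in its first term, the intermediate correlators are unbalanced; I would therefore establish the rectangular version
\[
Z^{(0)}(\{h_{+},x^{+}_{i}\}_{i\in S};\{h_{-},x^{-}_{j}\}_{j\in T};\bm{e^{\alpha},z})
= Z^{(0)}(\bm{e^{\alpha},z})\,\pperm_{\,\theta^{-},\theta^{+}}
\left(\frac{1}{(x^{+}_{i}-x^{-}_{j})^{2}}\right)_{i\in S,\,j\in T},
\]
for arbitrary finite index sets $S,T$, where the partial permutations run over injective partial maps $S\to T$ and $\theta^{+}_{i}$ (resp.\ $\theta^{-}_{j}$) weights the unmatched rows (resp.\ columns). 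The Lemma is the case $S=T=\{1,\ldots,n\}$. Inducting on $|S|$ by removing the leftmost $h_{+}$, the recursion produces two kinds of terms: the $\theta^{+}_{1}=\sum_{t}\alpha^{t}_{+}/(x^{+}_{1}-z_{t})$ term, in which $h_{+}(x^{+}_{1})$ contracts only with the exponentials and the reduced correlator carries index sets $S\setminus\{1\}$ and $T$; and the terms $\sum_{j}(x^{+}_{1}-x^{-}_{j})^{-2}$, in which $h_{+}(x^{+}_{1})$ pairs with $h_{-}(x^{-}_{j})$ and the reduced correlator carries $S\setminus\{1\}$ and $T\setminus\{j\}$. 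This is precisely the partial permanent's expansion obtained by splitting the sum over $\psi$ according to whether row $1$ lies outside $\dom\psi$ (contributing the row weight $\theta^{+}_{1}$ and leaving $\pperm$ over $S\setminus\{1\},T$) or is mapped to a column $j$ (contributing $(x^{+}_{1}-x^{-}_{j})^{-2}$ and leaving $\pperm$ over $S\setminus\{1\},T\setminus\{j\}$). Applying the inductive hypothesis to each reduced correlator closes the induction, all equalities being identities of rational functions in the $x^{\pm}$.

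For the base case $S=\varnothing$ I would use the symmetric recursion obtained by instead moving the leftmost $h_{-}$ through the exponentials: since \eqref{eq:Heiscom} is symmetric under $h_{+}\leftrightarrow h_{-}$ and the contraction of $h_{-}$ with $\calY(e^{\alpha^{t}},z_{t})$ yields $\alpha^{t}_{-}$ in place of $\alpha^{t}_{+}$, each free $h_{-}$ contributes $\theta^{-}_{j}=\sum_{t}\alpha^{t}_{-}/(x^{-}_{j}-z_{t})$. Iterating gives $\prod_{j\in T}\theta^{-}_{j}\,Z^{(0)}(\bm{e^{\alpha},z})$, matching the partial permanent with empty domain, whose only term is $\prod_{j\in T}\theta^{-}_{j}$. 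The main obstacle is not the algebra but the bookkeeping: checking that after removing the common factor $Z^{(0)}(\bm{e^{\alpha},z})$ the rectangular recursion matches the Laplace expansion term by term, so that the weights $\theta^{\pm}$ attach to exactly the unmatched rows and columns and the $z$-dependent prefactor persists unchanged through every reduction.
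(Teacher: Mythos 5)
Your proposal is correct and follows essentially the same route as the paper: the paper derives exactly the displayed recursion from \eqref{eq:Comm}, \eqref{eq:Heiscom}, \eqref{eq:alphan} and \eqref{eq:Comm_alpha} and obtains the Lemma by ``repeating for each $h_{\pm}$'' and invoking \eqref{eq:Z0alpha} for the prefactor. Your rectangular induction, with the recursion identified as the row expansion of $\pperm$ by splitting over whether $1\in\dom\psi$, is simply a careful formalization of that same iteration.
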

Let 
$\F^{(0)}(\bm{h_{\pm},x^{\pm}};\bm{e^{\alpha},z}):=
Z^{(0)}(\bm{h_{\pm},x^{\pm}};\bm{e^{\alpha},z})
\bm{dx^{+}dx^{-}dz^{\half \alpha^2}}$ for 
$\bm{dz^{\half \alpha^2}}=\prod_{t=1}^{k}dz_{t}^{\alpha^{t}_{+}\alpha^{t}_{-}} $, much as in \eqref{eq:form2npt0}. For genus zero differential of the third kind $\omega^{(0)}_{p-q}$ of \eqref{eq:omK0}
 we find
\begin{align*}	
\chi^{\pm}_{i}:=\theta^{\pm}_{i}dx_{i}^{\pm}=\sum_{t=1}^{k}\alpha_{\pm}^{t}
\omega^{(0)}_{z_{t}-z_{0}}(x^{\pm}_{i}),
\end{align*} 
where $\chi^{\pm}_{i}$ is independent of $z_{0}$ since $\sum_{t=1}^{k}\alpha^{t}_{\pm}= 0$.
Then \eqref{eq:2nptalpha} may be rewritten in terms of the genus zero differentials 
$\omega^{(0)}(x,y)$, $E^{(0)}(x,y)$ and $\omega^{(0)}_{p-q}(x)$ as follows:
\begin{corollary}
	\label{cor:Zohalpha}
	\begin{align} 
	\F^{(0)}(\bm{h_{\pm},x^{\pm}};\bm{e^{\alpha},z})  = 
	\varepsilon_{\bm{\alpha}}\prod_{1\le t<u\le k}
	E^{(0)}(z_{t},z_{u})^{\alpha^{t}\cdot \alpha^{u}}
	\;\pperm_{\,\chi^{-},\chi^{+}}\omega^{(0)}(x^{-}_{i},x^{+}_{j}).
	\label{eq:form2nptalpha0}
	\end{align}
\end{corollary}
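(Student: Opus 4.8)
The plan is to deduce the corollary directly from the scalar identity \eqref{eq:2nptalpha} of the preceding Lemma, since $\F^{(0)}(\bm{h_{\pm},x^{\pm}};\bm{e^{\alpha},z})$ is by definition just $Z^{(0)}(\bm{h_{\pm},x^{\pm}};\bm{e^{\alpha},z})$ multiplied by $\bm{dx^{+}dx^{-}dz^{\half\alpha^2}}$. The entire task is therefore to redistribute these one-form factors: the $dz_{t}$ factors are absorbed into the scalar prefactor to build the genus zero prime forms $E^{(0)}(z_{t},z_{u})$, while the $dx^{\pm}_{i}$ factors are absorbed into the partial permanent so as to convert each scalar propagator $(x^{+}_{i}-x^{-}_{j})^{-2}$ into $\om{0}{}$ and each extended entry $\theta^{\pm}_{i}$ into $\chi^{\pm}_{i}$.

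First I would handle the prefactor. Writing $E^{(0)}(z_{t},z_{u})=(z_{t}-z_{u})\,dz_{t}^{-1/2}dz_{u}^{-1/2}$ from \eqref{eq:omK0}, the product $\prod_{t<u}E^{(0)}(z_{t},z_{u})^{\alpha^{t}\cdot\alpha^{u}}$ carries, for each fixed index $s$, a total power of $dz_{s}$ equal to $-\half\,\alpha^{s}\cdot\sum_{u\neq s}\alpha^{u}$, obtained by collecting the contributions of all pairs containing $s$. The charge cancellation hypothesis $\sum_{t}\alpha^{t}=0$ turns $\sum_{u\neq s}\alpha^{u}$ into $-\alpha^{s}$, so this power collapses to $\half\,\alpha^{s}\cdot\alpha^{s}=\alpha^{s}_{+}\alpha^{s}_{-}$, using $\alpha\cdot\alpha=2\alpha_{+}\alpha_{-}$. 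Hence $\prod_{t<u}E^{(0)}(z_{t},z_{u})^{\alpha^{t}\cdot\alpha^{u}}=\prod_{t<u}(z_{t}-z_{u})^{\alpha^{t}\cdot\alpha^{u}}\,\bm{dz^{\half\alpha^2}}$, which is exactly the scalar prefactor of \eqref{eq:2nptalpha} dressed by the $z$-differentials of $\F^{(0)}$. This is the only genuinely non-routine point, and $\sum_{t}\alpha^{t}=0$ is precisely what is needed to convert the off-diagonal pairing exponents into the diagonal conformal weights $\alpha^{s}_{+}\alpha^{s}_{-}$ carried by $\bm{dz^{\half\alpha^2}}$.

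Next I would distribute $\prod_{i}dx^{+}_{i}dx^{-}_{i}$ through the partial permanent term by term in the sum \eqref{eq:pperm} over partial permutations $\psi$. In each term every row index occurs once (either as a matched propagator or as a factor $\theta^{+}$) and every column index occurs once (either as a matched propagator or as a factor $\theta^{-}$), so the one-form degrees distribute exactly: a matched entry absorbs $dx^{+}_{i}dx^{-}_{\psi(i)}$ and becomes $\om{0}{}(x^{+}_{i},x^{-}_{\psi(i)})$, while each unmatched $\theta^{\pm}$ absorbs the corresponding $dx^{\pm}$ and becomes $\chi^{\pm}$ by $\chi^{\pm}_{i}=\theta^{\pm}_{i}dx^{\pm}_{i}$. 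This yields $\pperm_{\,\theta^{-},\theta^{+}}(x^{+}_{i}-x^{-}_{j})^{-2}\cdot\prod_{i}dx^{+}_{i}dx^{-}_{i}=\pperm_{\,\chi^{-},\chi^{+}}\om{0}{}(x^{+}_{i},x^{-}_{j})$. Finally, since $\om{0}{}$ is symmetric and the partial permanent satisfies $\pperm_{\,\theta,\phi}A=\pperm_{\,\phi,\theta}A^{T}$ (replace $\psi$ by $\psi^{-1}$ in \eqref{eq:pperm}), the matched propagators may be written with arguments in the order $\om{0}{}(x^{-}_{i},x^{+}_{j})$ as in the statement; the labels $\chi^{-}$ and $\chi^{+}$ attach to the $x^{-}$ and $x^{+}$ insertions respectively, independently of which is taken as row or column, so the value coincides with the symmetric expression of the statement. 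Combining the two distribution steps then gives \eqref{eq:form2nptalpha0}.
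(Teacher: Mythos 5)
Your proposal is correct and follows the paper's (implicit) route: the paper states the corollary as an immediate rewriting of \eqref{eq:2nptalpha} once $\chi^{\pm}_{i}=\theta^{\pm}_{i}dx^{\pm}_{i}$ is introduced, and you simply make the bookkeeping explicit. Your two checks are exactly the right ones and both are sound: using $\sum_{t}\alpha^{t}=0$ to show $\prod_{t<u}E^{(0)}(z_{t},z_{u})^{\alpha^{t}\cdot\alpha^{u}}=\prod_{t<u}(z_{t}-z_{u})^{\alpha^{t}\cdot\alpha^{u}}\,\bm{dz^{\half\alpha^2}}$, and using the symmetry of $\om{0}{}$ together with $\pperm_{\,\theta,\phi}A=\pperm_{\,\phi,\theta}A^{T}$ to reconcile the argument order $\om{0}{}(x^{-}_{i},x^{+}_{j})$ in the statement with the matrix $(x^{+}_{i}-x^{-}_{j})^{-2}$ of the Lemma.
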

$M^{2}_{\alpha}$ has a Fock basis with elements 
\begin{align}
\label{eq:Fockalpha}
h^{\alpha}(\bi_{+},\bi_{-}):=\ldots h_{+}{(-i_{+})}^{r(i_{+})}\ldots h_{-}{(-i_{-})}^{r(i_{-})} \ldots  \vac\otimes e^{\alpha},
\end{align}
labelled by a pair of  multisets $\bi_{+},\bi_{-}$ and with conformal weight
\begin{align*}
\wt(h^\alpha(\bi_{+},\bi_{-}))=\sum_{i_{+}}i_{+}r\left(i_{+}\right)+\sum_{i_{-}}i_{-}r\left(i_{-}\right)+\half\alpha\cdot\alpha.
\end{align*}
A Li-Z metric exists on $\M^{2}$ leading to a Fock dual basis with elements  \cite{TZ}  (with  cocycle choice $\varepsilon(\alpha,-\alpha)=1$) given by 
\begin{align}
\overline{h^\alpha}{(\bi_{+},\bi_{-})} =
(-1)^{\half \alpha\cdot\alpha } \left(\prod_{i_{\pm}}\left(\frac{-1}{i_{+}}\right)^{r\left(i_{+}\right)}
\left(\frac{-1}{i_{-}}\right)^{r\left(i_{-}\right)}
\frac{1}{r\left(i_{+}\right)! r\left(i_{-}\right)!} \right)
h^{-\alpha}{(\bi_{-},\bi_{+})}.
\label{eq:Fockdualalpha}
\end{align}
\subsection{Genus $g$ partition functions on $\M^{2}$}
Let $\alpha^{a} \in\C^{2}$ for $a\in\Ip$ and consider  $M^{2}_{\bm{\alpha}}:=\otimes_{a\in\Ip}M^{2}_{\alpha^{a}}$.
Similarly to \eqref{eq:ZVg}, we define the genus $g$ partition function for $M^{2}_{\bm{\alpha}}$  by
\begin{align}
\label{eq:ZVgalpha}
Z_{M^{2}_{\bm{\alpha}}}^{(g)}(\bm{w,\rho})
:=\sum_{\bm{b^{\alpha}}\in M^{2}_{\bm{\alpha}}}\bm{\rho^{\wt(b^{\alpha})}}Z^{(0)}(\bm{b^{\alpha},w}),
\end{align}
for 
$Z^{(0)}(\bm{b^{\alpha},w}):=Z^{(0)}(\ldots;\overline{b_{a}^{\alpha^{a}}},w_{-a};b^{\alpha^{a}}_{a},w_{a};\ldots)$.  
Define $\alpha^{-a}:=-\alpha^{a}$ for $a\in\Ip$ so that  
$\overline{\vac\otimes e^{\alpha^{a}}}=(-1)^{\half \alpha^{a}\cdot\alpha^{a} } \vac\otimes e^{\alpha^{-a}}$. From \eqref{eq:Z0alpha} and \eqref{eq:epsalpha} we find  that  $\varepsilon_{\bm{\alpha}}=1$  and
\begin{align}
\label{eq:Z0lwalpha}
\langle \vac, \ldots\calY(e^{\alpha^{-a}},w_{-a})\calY(e^{\alpha^{a}},w_{a})\ldots \vac \rangle = 
\prod_{a<b}
(w_{a}-w_{b})^{\alpha^{a}\cdot \alpha^{b}},
\end{align}
where the product is taken over $a,b\in\I$ with ordering $-1<1<\ldots -g<g$. 
\begin{proposition}
	\label{prop:ZMalpha}
	The  genus $g$ partition function for  $M^{2}_{\bm{\alpha}}$  is
	\[
	Z_{M^{2}_{\bm{\alpha}}}^{(g)}(\bm{w,\rho})
	=\frac{e^{\im\pi\bm{\alpha}.\Omega.\bm{\alpha}}}{\det(I-A)}, 
	\]
	where $
\bm{\alpha}.\Omega.\bm{\alpha}=\sum_{a,b\in\Ip}(\alpha^{a}\cdot\alpha^{b})\Omega_{ab}$
for  genus $g$ period matrix $\Omega$.
\end{proposition}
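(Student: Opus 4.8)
The plan is to run the argument of Proposition~\ref{prop:ZM} with the $e^{\alpha}$ insertions carried along, so that the permanent becomes a partial permanent and the MacMahon Master Theorem~\ref{theor:MMT} is replaced by its partial-permutation version, Theorem~\ref{theor:MMTpperm}. First I would expand the sum in \eqref{eq:ZVgalpha} over the Fock basis \eqref{eq:Fockalpha}, taking $b_{a}=h^{\alpha^{a}}(\bi_{a},\bi_{-a})$ for each $a\in\Ip$ and labelling all Heisenberg modes by multisets $\bi_{c}$, $c\in\I$, exactly as in Proposition~\ref{prop:ZM}. The new $\alpha$-dependence enters in two places: the conformal weight now carries an extra $\half\alpha^{a}\cdot\alpha^{a}$, producing a factor $\rho_{a}^{\half\alpha^{a}\cdot\alpha^{a}}$ in $\bm{\rho^{\wt(b^{\alpha})}}$, while the dual basis \eqref{eq:Fockdualalpha} contributes the sign $(-1)^{\half\alpha^{a}\cdot\alpha^{a}}$; together these give a prefactor $\prod_{a\in\Ip}(-\rho_{a})^{\half\alpha^{a}\cdot\alpha^{a}}$.

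Next I would evaluate $Z^{(0)}(\bm{b^{\alpha},w})$ as a coefficient of the generating function of the Lemma containing \eqref{eq:2nptalpha}, with the $e^{\alpha}$ points $z_{t}$ specialised to the sewing points $w_{c}$ ($c\in\I$) and $\alpha^{-a}=-\alpha^{a}$. Since $\varepsilon_{\bm{\alpha}}=1$, the pure $e^{\alpha}$ part factors out as $\prod_{a<b}(w_{a}-w_{b})^{\alpha^{a}\cdot\alpha^{b}}$ by \eqref{eq:Z0lwalpha}, and the Heisenberg part becomes a partial permanent $\pperm_{\theta^{-},\theta^{+}}\Ah(\bi_{c},\bi_{c})$ whose vectors $\theta^{\pm}$ are the coefficient expansions of \eqref{eq:thetapmi}. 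Absorbing the $-\rho_{a}^{i}/i$ factors of \eqref{eq:M} into the matrix converts $\Ah$ into the moment matrix $A$ just as before, and simultaneously converts $\theta^{\pm}$ into the vectors $\overline{d}_{b},d_{b}$ of Proposition~\ref{prop:nu} weighted by $\alpha^{b}_{\pm}$; here the pairing of the $c=b$ and $c=-b$ contributions, using $\alpha^{-b}=-\alpha^{b}$, reconstitutes the moments of $\om{0}{w_{b}-w_{-b}}$ that define the $d^{a}_{b}(k)$.

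With these identifications the partition function becomes $\prod_{a\in\Ip}(-\rho_{a})^{\half\alpha^{a}\cdot\alpha^{a}}\prod_{a<b}(w_{a}-w_{b})^{\alpha^{a}\cdot\alpha^{b}}$ times $\sum_{\bi_{c}}\frac{1}{\brfact}\pperm_{\theta^{-},\theta^{+}}A(\bi_{c},\bi_{c})$; truncating to finite order in $\bm{\rho}$ and applying Theorem~\ref{theor:MMTpperm} collapses the last sum to $e^{\theta^{-}(I-A)^{-1}(\theta^{+})^{T}}/\det(I-A)$. The exponent is, up to sign, $\sum_{a,b\in\Ip}\alpha^{a}_{-}\alpha^{b}_{+}\,d_{a}(I-A)^{-1}\overline{d}_{b}$, which by symmetry of the period matrix equals $\half\sum_{a,b}(\alpha^{a}\cdot\alpha^{b})\,d_{a}(I-A)^{-1}\overline{d}_{b}$; these are precisely the bilinear halves of \eqref{eq:Omgab} and \eqref{eq:Omgaa}, and the denominator is already the required $\det(I-A)$.

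It then remains to match the two logarithmic prefactors against the logarithmic halves of $\im\pi\,\bm{\alpha}.\Omega.\bm{\alpha}$: the cross terms of $\prod_{a<b}(w_{a}-w_{b})^{\alpha^{a}\cdot\alpha^{b}}$ should reproduce the four-factor logarithm of \eqref{eq:Omgab}, while the self-pairings $(w_{-a}-w_{a})^{\alpha^{-a}\cdot\alpha^{a}}$ together with $\prod_{a}(-\rho_{a})^{\half\alpha^{a}\cdot\alpha^{a}}$ should reproduce $\log\bigl(-\rho_{a}/(w_{a}-w_{-a})^{2}\bigr)$ of \eqref{eq:Omgaa}. This last matching is where I expect the main difficulty: because $\alpha\in\C^{2}$, the quantity $\alpha^{a}\cdot\alpha^{a}$ is not an even integer, so the phases $(-1)^{\half\alpha^{a}\cdot\alpha^{a}}$ from \eqref{eq:Fockdualalpha}, the branch ambiguity between $(w_{-a}-w_{a})^{-\alpha\cdot\alpha}$ and $(w_{a}-w_{-a})^{-\alpha\cdot\alpha}$ forced by the ordering $-1<1<\cdots<-g<g$, and the $(-\rho_{a})^{k/2}$ normalisations buried in $A$ and in the $d$-vectors must all be tracked as genuine phases and shown to assemble into the single factor $e^{\im\pi\,\bm{\alpha}.\Omega.\bm{\alpha}}$ with $\im\pi=\half\tpi$. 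Convergence and holomorphy require no new input: they are inherited from Theorem~\ref{theor:detIA} and Proposition~\ref{prop:omgsew} exactly as in Proposition~\ref{prop:ZM}.
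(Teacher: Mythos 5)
Your proposal is correct and matches the paper's own proof essentially step for step: the same Fock-basis expansion giving the prefactor $\prod_{a\in\Ip}(-\rho_{a})^{\half\alpha^{a}\cdot\alpha^{a}}$, the same coefficient extraction from \eqref{eq:2nptalpha} with the $e^{\alpha}$ insertions at the sewing points so that $\varepsilon_{\bm{\alpha}}=1$ and \eqref{eq:Z0lwalpha} factors out, the same absorption of the $-\rho_{a}^{i_{a}}/i_{a}$ factors turning $\Ah$ into $A$ and the coefficient vectors into $\mp\sum_{b\in\Ip}\alpha^{b}_{\pm}d^{\mp a}_{b}(k)$, then Theorem~\ref{theor:MMTpperm} and the final comparison of the exponent $-\sum_{a,b\in\Ip}\alpha^{a}_{-}\alpha^{b}_{+}\,d_{a}(I-A)^{-1}\overline{d}_{b}$ together with the logarithmic prefactors against \eqref{eq:Omgab} and \eqref{eq:Omgaa}. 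The phase and branch bookkeeping you flag as the main difficulty is precisely what the paper disposes of (tersely) by collecting all prefactors into $F(\bm{w,\rho})$ of \eqref{eq:Fwrho} and comparing directly with Proposition~\ref{prop:period}, so no further idea is required.
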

\begin{proof}
In this case we sum over  $b_{a}=h^{\alpha^{a}}(\bi_{a},\bi_{-a})$ of \eqref{eq:Fockalpha} labelled by $2g$ multisets with dual Fock basis vectors of \eqref{eq:Fockdualalpha}. We find
	\begin{align*}
	\bm{\rho^{\wt(b^{\alpha})}}Z^{(0)}(\bm{b^{\alpha},w})=&
\prod_{a\in\Ip}(-\rho_{a})^{\half\alpha^{a}\cdot\alpha^{a}} \frac{M(\bm{\rho},\bi_{a})}{\brfact}
	Z^{(0)}\left(\ldots ;h^{\alpha^{a}}(\bi_{a},\bi_{-a}),w_{a};\ldots \right).
	\end{align*}
$Z^{(0)}\left(\ldots ;h^{\alpha^{a}}(\bi_{a},\bi_{-a}),w_{a};\ldots \right)$ is determined by the coefficient of an appropriate expansion of the generating function \eqref{eq:2nptalpha} with 
$\theta^{\pm}(x )= \sum_{b\in\Ip}\alpha_{\pm}^{b}(\frac{1}{x -w_{b}}-\frac{1}{x-w_{-b}})$. 
We find that 
	\begin{align*}
	Z^{(0)}\left(\ldots ;h^{\alpha^{a}}(\bi_{a},\bi_{-a}),w_{a};\ldots \right)
	=\prod_{a<b}	(w_{a}-w_{b})^{\alpha^{a}\cdot \alpha^{b}}
	\pperm_{\,\widehat{\phi}^{-} ,\widehat{\phi}^{+}} \Ah\left(\bi_{a},\bi_{a}\right),
	\end{align*}	
for $a,b\in\I$ as in  \eqref{eq:Z0lwalpha}, $\widehat{A}$ of \eqref{eq:Ahat} and
	\begin{align}
	\label{eq:phihat}
\widehat{\phi}^{\pm}_{a}(k)=&\coeff_{x^{k-1}}\theta^{\pm}(x+w_{\mp a})=\sqrt{k}\rho_{a}^{-k/2}\sum_{b\in\Ip}\alpha_{\pm}^{b}d_{b}^{\mp a}(k),\quad (k\in\bi_{a})
	\end{align}
for $d^{a}_{b}(k)$ of \eqref{eq:ddef}. 
	The  $-\rho_{a}^{i_{a}}/{i_{a}}$ multiplicative factors in $M(\bm{\rho},\bi_{a})$ are absorbed into the $\Ah$ and $\widehat{\phi}^{\pm}$ terms using Theorem ~\ref{theor:MMTpperm}
	and Proposition~\ref{prop:ZM}   to obtain
	\begin{align*}
		Z_{M^{2}_{\bm{\alpha}}}^{(g)}(\bm{w,\rho})
	=&
F(\bm{w,\rho})
	\sum_{\bi_{a}}\frac{\pperm_{\,\phi^{-},\phi^{+}} A\left(\bi_{a},\bi_{a}\right)}{\brfact}
=
F(\bm{w,\rho})
		\frac{e^{\phi^{-}(I-A)^{-1}(\phi^{+})^{T}}}{\det(I-A) },
\end{align*}
where 
\begin{align}
\phi^{\pm}_{a}(k):=&\mp\frac{\rho_{a}^{k/2}}{\sqrt{k}}\widehat{\phi}_{a}^{\pm}(k)=\mp\sum_{b\in\Ip}\alpha_{\pm}^{b}d_{b}^{\mp a}(k),\quad (k\in\bi_{a})
\label{eq:phika}
\\
\label{eq:Fwrho}
F(\bm{w,\rho}):
	=&	\prod_{a\in\Ip}
	\left(\frac{ -\rho_{a}}{(w_{a}-w_{-a})^{-2}}\right)^{\half\alpha^{a}\cdot\alpha^{a}}
	\prod_{a,b\in\Ip;a\neq b} 	
	\left(\frac{(w_{a}-w_{b})(w_{-a}-w_{-b})}{(w_{-a}-w_{b})(w_{-a}-w_{-b})}\right)^{\half \alpha^{a}\cdot \alpha^{b}},
	\end{align}
	recalling that $\alpha^{-a}=-\alpha^{a}$. Noting that 
	$\phi^{-}(I-A)^{-1}\phi^{+}=-\sum_{a,b\in\Ip}\alpha^{a}_{-}\alpha^{b}_{+}
	d_{a}(I-A)^{-1}\overline{d}_{b}$ the result follows on comparison with the expressions for $\tpi \Omega_{ab}$ in \eqref{eq:Omgab} and \eqref{eq:Omgaa}.
\end{proof}

\subsection{Genus $g$ correlation functions on $\M^{2}$}
We define genus $g$   $n$-point correlation differential forms on 
$M^{2}_{\bm{\alpha}}=\otimes_{a\in\Ip}M^{2}_{\alpha^{a}}$ for $n$ vectors 
 $v_{i}\otimes e^{\beta^{i}}\in \M^{2}$ inserted at $x_{i}$  by
\begin{align}\label{eq:Fnptalpha}
\F_{M^{2}_{\bm{\alpha}}}^{(g)}(\bm{v\otimes e^{\beta},x}):=
\sum_{\bm{b}\in M^{2}_{\bm{\alpha}}}\bm{\rho^{\wt(b)}}Z^{(0)}(\bm{v\otimes e^{\beta},x};\bm{b,w})\bm{dx^{\wt(v\otimes e^{\beta})}},
\end{align}
for  $
Z^{(0)}(\bm{v,x};\bm{b,w})=
Z^{(0)}(\ldots;v_{i}\otimes e^{\beta^{i}},x_{i};\ldots;\bbar_{a},w_{-a};b_{a},w_{a};\ldots)$.

We describe the genus $g$ generating function for all correlation functions \eqref{eq:Fnptalpha}  generalizing Proposition~\ref{prop:ZMnptg} and Corollary~\ref{cor:Zohalpha}. The result is expressed in terms of 
the genus $g$ bidifferential $\omega(x,y)$, the normalized 1-form $\nu_{a}(x)$,  the period matrix $\Omega$, the differential of the third kind $\omega_{p-q}(x)$ and the prime form $E(x,y)$ of \eqref{eq:omegag}--\eqref{eq:prime} as follows:
\begin{theorem}	\label{theor:Zmain}  
For Heisenberg generators $h_{\pm}$ inserted at $y_{r}^{\pm}$ for $r\in \{1,\ldots,m\}$ and  $e^{\beta^{t}}$, for $\beta^{t}\in\C^2$ with $\sum_{t=1}^{n}\beta^{t}= 0$, inserted at $z_{t}$ for $t\in \{1,\ldots,n\}$ we have
\begin{align}
\label{eq:Zmain}
\F_{M^{2}_{\bm{\alpha}}}^{(g)}(\bm{h_{\pm},y^{\pm};e^{\beta},z })=&
\varepsilon_{\bm{\beta}} 
\prod_{t<u} E(z_{t},z_{u})^{\beta^{t}\cdot \beta^{u}}
\pperm_{\,\widetilde{\theta}^{-} , \widetilde{\theta}^{+} }\omega\left(y^{+}_{r},y^{-}_{s}\right) 
\\
\notag
& \times	
\exp\left(\im\pi\bm{\alpha}.\Omega.\bm{\alpha}
	+
	\sum_{a, t}
	\alpha^{a}\cdot \beta^{t}\,\int^{z_{t}}_{z_{0}}\nu_{a}
	\right) \det(I-A)^{-1},
\end{align}
for $a\in\Ip$; $t,u\in \{1,\ldots,n\}$ with 
\begin{align}
\label{eq:thetatilde}
\widetilde{\theta}^{\pm}_{r}:= \sum_{a\in\Ip}\alpha^{a}_{\pm}\nu_{a}(y_{r}^{\pm})
+\sum_{t=1}^{n}\beta^{t}_{\pm}\omega_{z_{t}-z_{0}}(y_{r}^{\pm}),\quad r\in \{1,\ldots,m\},
\end{align}
for arbitrary choice of $z_{0}$ in \eqref{eq:Zmain} or  \eqref{eq:thetatilde}. 
\end{theorem}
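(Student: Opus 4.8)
The plan is to combine the strategies of Propositions~\ref{prop:ZMnptg} and~\ref{prop:ZMalpha}, now invoking the full Submatrix Partial Permutation MMT (Theorem~\ref{theor:MMTsubpperm}) rather than either of its two specializations. First I would expand the defining sum \eqref{eq:Fnptalpha} over the internal Fock basis $b_{a}=h^{\alpha^{a}}(\bi_{a},\bi_{-a})$ of \eqref{eq:Fockalpha}, using the dual basis \eqref{eq:Fockdualalpha}. Exactly as in the proof of Proposition~\ref{prop:ZMalpha} this produces the scalar prefactors $\prod_{a\in\Ip}(-\rho_{a})^{\half\alpha^{a}\cdot\alpha^{a}}$ together with the $M(\bm{\rho},\bi_{a})/\brfact$ factors, the latter to be absorbed into the matrix and vector entries at the end. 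The remaining genus zero correlator---carrying external $h_{\pm}$ at $y_{r}^{\pm}$, external $e^{\beta^{t}}$ at $z_{t}$, and the internal states at $w_{a}$---is then a coefficient extraction from the master generating function \eqref{eq:2nptalpha}, now for the full charge set $\{\beta^{t}\}\cup\{\alpha^{a}:a\in\I\}$, which is neutral since $\sum_{t}\beta^{t}=0$ and $\alpha^{-a}=-\alpha^{a}$.

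This extraction yields a block partial permanent $\pperm_{\,\Theta,\Phi}\begin{bmatrix}\Bh & \Uh(\bi_{a})\\ \Vh(\bi_{a}) & \Ah(\bi_{a},\bi_{a})\end{bmatrix}$ with the matrices $\Bh,\Uh,\Vh$ of \eqref{eq:BUVhat} and $\Ah$ of \eqref{eq:Ahat}, multiplied by the charge--charge product supplied by \eqref{eq:Z0alpha} over all pairs drawn from $\{z_{t}\}$ and $\{w_{a}\}$. The essential new feature is that every $h_{\pm}$ slot, both the external ones at $y_{r}^{\pm}$ and the internal Heisenberg slots near $w_{a}$, couples to all the charges; hence the vectors $\Theta,\Phi$ split into an external block given at genus zero by $\sum_{a\in\I}\frac{\alpha_{\pm}^{a}}{y_{r}^{\pm}-w_{a}}+\sum_{t}\frac{\beta_{\pm}^{t}}{y_{r}^{\pm}-z_{t}}$ and an internal block carrying both $\alpha$ and $\beta$ contributions. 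After absorbing the $-\rho_{a}^{i_{a}}/i_{a}$ factors as before, summing over the multisets $\bi_{a}$ via Theorem~\ref{theor:MMTsubpperm} replaces this by $\frac{e^{\theta(I-A)^{-1}\phi^{T}}}{\det(I-A)}\,\pperm_{\,\widetilde{\theta},\widetilde{\phi}}\widetilde{B}$.

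The main work is then to match each surviving piece with its genus $g$ counterpart. The identification $\widetilde{B}(r,s)=\omega(y_{r}^{+},y_{s}^{-})$ is exactly \eqref{eq:Btilde} from Proposition~\ref{prop:ZMnptg}. For the deformed vectors $\widetilde{\theta}=\theta'+\theta(I-A)^{-1}V$ and $\widetilde{\phi}^{T}=\phi'^{T}+U(I-A)^{-1}\phi^{T}$, the $(I-A)^{-1}$ correction upgrades the external genus zero data: the $\alpha^{a}$ pieces become $\alpha_{\pm}^{a}\nu_{a}(y_{r}^{\pm})$ by Proposition~\ref{prop:nu}, while the $\beta^{t}$ pieces become $\beta_{\pm}^{t}\omega_{z_{t}-z_{0}}(y_{r}^{\pm})$ by the analogous sewing expansion of the third-kind differential $\om{0}{z_{t}-z_{0}}$ (obtained exactly as in Proposition~\ref{prop:nu}), giving precisely $\widetilde{\theta}_{r}^{\pm}$ of \eqref{eq:thetatilde}. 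The exponential $e^{\theta(I-A)^{-1}\phi^{T}}$, whose internal vectors carry both charge types, splits into three kinds of term: the $\alpha$--$\alpha$ part, combined with the $(w_{a}-w_{b})$ factors, reproduces $e^{\im\pi\bm{\alpha}.\Omega.\bm{\alpha}}$ via \eqref{eq:Omgab} and \eqref{eq:Omgaa} exactly as in Proposition~\ref{prop:ZMalpha}; the $\alpha$--$\beta$ part, combined with the $(z_{t}-w_{a})$ factors, assembles into $\exp(\sum_{a,t}\alpha^{a}\cdot\beta^{t}\int_{z_{0}}^{z_{t}}\nu_{a})$ by Proposition~\ref{prop:nu}; and the $\beta$--$\beta$ part, combined with $\prod_{t<u}(z_{t}-z_{u})^{\beta^{t}\cdot\beta^{u}}$ and the conformal-weight differentials in \eqref{eq:Fnptalpha}, must upgrade the genus zero prime form $E^{(0)}$ to the genus $g$ prime form $\prod_{t<u}E(z_{t},z_{u})^{\beta^{t}\cdot\beta^{u}}$.

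I expect this last prime-form upgrade to be the hardest step. It requires showing that the $\beta$--$\beta$ double sum integrates the sewing correction $\omega-\om{0}{}=-L(x)(I-A)^{-1}R(y)$ of Proposition~\ref{prop:omgsew} twice, i.e.\ that $\log E(z_{t},z_{u})-\log E^{(0)}(z_{t},z_{u})=\int^{z_{t}}\!\!\int^{z_{u}}(\omega-\om{0}{})$ with the base-point ambiguities cancelling; the cancellation, and the independence of the whole expression from the reference point $z_{0}$, both follow from $\sum_{t}\beta^{t}=0$ together with $\alpha^{-a}=-\alpha^{a}$. Once these identifications are in place, restoring the $\det(I-A)^{-1}$ prefactor gives \eqref{eq:Zmain}, and a truncation-to-finite-order argument in $\bm{\rho}$ as in Proposition~\ref{prop:ZM} justifies interchanging the multiset summation with the Master Theorem.
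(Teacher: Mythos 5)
Your proposal is correct and follows essentially the same route as the paper: expansion over the Fock basis of $M^{2}_{\bm{\alpha}}$, coefficient extraction from the generating function \eqref{eq:2nptalpha} with the combined charge set, application of Theorem~\ref{theor:MMTsubpperm}, and identification of the genus $g$ data via Propositions~\ref{prop:omgsew}, \ref{prop:nu} and \ref{prop:period}. Your treatment of the hardest step matches the paper's, which handles the $\beta$--$\beta$ terms via $r(x,y)=\log\bigl(K(x,y)/(x-y)\bigr)$, i.e.\ exactly the double integration of $\omega-\om{0}{}$ with base-point ambiguities cancelling by neutrality $\sum_{t}\beta^{t}=0$.
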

\begin{proof}
The proof is similar to that of Propositions~\ref{prop:ZMnptg} and \ref{prop:ZMalpha}. Here the sum in  \eqref{eq:Fnptalpha}  is taken over $b_{a}=h_{\alpha_{a}}(\bi_{a},\bi_{-a})$ with summand 
	\begin{align*}
	&\bm{\rho^{\wt(b)}}Z^{(0)}(\bm{\bm{h_{\pm},y^{\pm}};b,w})=
	\prod_{a\in\Ip}(-\rho_{a})^{\half\alpha^{a}\cdot\alpha^{a}} 
	\frac{M(\bm{\rho},\bi_{a})}{\brfact}X_{\bm{\alpha}}(\bi_{a}),
	\end{align*}
	for $X_{\bm{\alpha}}(\bi_{a})=Z^{(0)}\left(\ldots;h_{+},y^{+}_{r};\ldots;h_{-},y^{-}_{s} \ldots;e^{\beta^{t}},z_{t}; \ldots ;h_{\alpha^{a}}
	(\bi_{a},\bi_{-a}),w_{a};\ldots \right)$	
	determined by expansions of \eqref{eq:2nptalpha} with 
	\begin{align*}
	\theta^{\pm}(x )= \sum_{b\in\Ip}\alpha_{\pm}^{b}\left(\frac{1}{x -w_{b}}-\frac{1}{x-w_{-b}}\right)
	+\sum_{t=1}^{n}\beta_{\pm}^{t}\left(\frac{1}{x-z_{t}}-\frac{1}{x-z_{0}}\right).
\end{align*} 
for arbitrary choice of $z_{0}$ (since $\sum_{t=1}^{n}\beta^{t}= 0$).
	We find that 
	\begin{align*}
	X_{\bm{\alpha}}(\bi_{a})=&\varepsilon_{\bm{\beta}}
	\prod_{a,s}(z_{s}-w_{a})^{\alpha^{a}\cdot \beta^{s}}
	\prod_{ t<u}	(z_{t}-z_{u})^{\beta^{t}\cdot \beta^{u}}
	\prod_{a<b}	(w_{a}-w_{b})^{\alpha^{a}\cdot \alpha^{b}}
	\\
	&\times 
	\pperm_{\,\widehat{\Theta}^{-},\widehat{\Theta}^{+}}
	\begin{bmatrix}
	\Bh&  \Uh(\bi_{a})\\ 
	\Vh(\bi_{a}) & \Ah\left(\bi_{a},\bi_{a}\right)
	\end{bmatrix},
	\end{align*}
	for product indices $a,b\in\I$; $s,t,u\in \{1,\ldots,n\}$,  with $\Ah$ of \eqref{eq:Ahat} and $\Bh, \Uh,\Vh$ of \eqref{eq:BUVhat} and where 
	$\widehat{\Theta}^{\pm}=(\ldots, \theta^{\pm}(y_{r}^{\pm}),\ldots \widehat{\psi}^{\pm}_{a}(k),\ldots)$ for $r\in\{1,\ldots,m\} $ and $ k\in\bi_{a}$ with
	\begin{align*}
	\widehat{\psi}^{\pm}_{a}(k)=&
	\coeff_{x^{k-1}}\theta^{\pm}(x+w_{\mp a})=
	\widehat{\phi}^{\pm}_{a}(k)+\sum_{t=1}^{n}\beta_{\pm}^{t}
	\left((z_{0}-w_{\mp a})^{-k-1}-(z_{t}-w_{\mp a})^{-k-1}\right),
	\end{align*}
for $\widehat{\phi}^{\pm}$ of \eqref{eq:phihat}.
Absorbing 
$-\rho_{a}^{i_{a}}/{i_{a}}$ and $\bm{dy^{+}_{r}dy^{-}_{s}}$ factors into the partial permanent
we find
	\begin{align*}
\F_{M^{2}_{\bm{\alpha}}}^{(g)}(\bm{\bm{h_{\pm},y^{\pm};e^{\beta},z}})
&=G(\bm{w,\rho})\sum_{\bi_{a}}\frac{1}{\brfact}
\pperm_{\, \Theta^{-},\Theta^{+}}
\begin{bmatrix}
B&  U(\bi_{a})\\ 
V(\bi_{a}) & A\left(\bi_{a},\bi_{a}\right)
\end{bmatrix},
\end{align*}
for $B,U,V$ of \eqref{eq:BUV} where with  $F(\bm{w,\rho})$ of \eqref{eq:Fwrho} we define
\begin{align}
\label{eq:Gterm}
G(\bm{z,w,\rho}):=&\varepsilon_{\bm{\beta}}F(\bm{w,\rho})
\prod_{a,s}\left(\frac{z_{s}-w_{a}}{z_{s}-w_{-a}}\right)^{\alpha^{a}\cdot \beta^{s}}
\prod_{ t<u}	(z_{t}-z_{u})^{\beta^{t}\cdot \beta^{u}}\bm{dz^{\half \beta^2}},
\end{align}
for $a\in\Ip$; $s,t,u\in\{1,\ldots,n\}$. 
 $\Theta^{\pm}=(\ldots,\theta^{\pm}_{r},\ldots,\psi^{\pm}_{a}(k),\ldots) $  where
\begin{align*}
\theta^{\pm}_{r}:=& \theta^{\pm}(y^{\pm}_{r})dy^{\pm}_{r}=
\sum_{b\in\Ip}\alpha_{\pm}^{b}\om{0}{w_{b}-w_{-b}}(y^{\pm}_{r})
+\sum_{t=1}^{n}\beta_{\pm}^{t}\om{0}{z_{t}-z_{0}}(y^{\pm}_{r}),
\\
\psi_{a}^{\pm}(k):=&\mp \frac{\rho_{a}^{k/2}}{\sqrt{k}}\widehat{\psi}^{\pm}_{a}(k)
=\mp\left(   
\sum_{b\in\Ip}\alpha_{\pm}^{b}d_{b}^{\mp a}(k)
+\sum_{t=1}^{n}\beta_{\pm}^{t}\int_{z_{0}}^{z_{t}}L_{\mp a}(k,\cdot)
\right),
\end{align*}
and $L_{a}(k,x)$ of \eqref{eq:Lkdef}.
By the general McMahon Master Theorem~\ref{theor:MMTsubpperm} we find
\begin{align}
\label{eq:fmid}
\F_{M^{2}_{\bm{\alpha}}}^{(g)}(\bm{\bm{h_{\pm},y^{\pm};e^{\beta},z}})=
G(\bm{w,\rho})\,
\frac{e^{\psi^{-}(I-A)^{-1}(\psi^{+})^{T}}}{\det(I-A)}
\pperm_{\,\widetilde{\theta}^{-} , \widetilde{\theta}^{+} }\widetilde{B},
\end{align}
with $\widetilde{B}(r,s)=\omega(y^{+}_{r},y^{-}_{s})$ from \eqref{eq:Btilde} 
and $\widetilde{\theta}^{+}= \theta^{+}+U(I-A)^{-1}\psi^{+}$ given by
\begin{align*}
\widetilde{\theta}^{+}_{r}
=&\sum_{b\in\Ip}\alpha_{+}^{b}\left( \om{0}{w_{b}-w_{-b}}(y^{+}_{r})
-L(y_{r}^{+} )(I-A)^{-1}\overline{d}_{b}\right)
\\
&+\sum_{t=1}^{n}\beta_{+}^{t}\left( \om{0}{z_{t}-z_{0}}(y^{+}_{r})
-L(y_{r}^{+}) (I-A)^{-1}\int_{z_{0}}^{z_{t}}R(\cdot)\right)
\\
&=\sum_{a\in\Ip}\alpha^{a}_{+}\nu_{a}(y_{r}^{+})
+\sum_{t=1}^{n}\beta^{t}_{+}\omega_{z_{t}-z_{0}}(y_{r}^{+}),
\end{align*}
by Propositions~\ref{prop:omgsew} and \ref{prop:nu}. A similar result holds for $\widetilde{\theta}^{-}$ so that we obtain the $\pperm_{\,\widetilde{\theta}^{-} , \widetilde{\theta}^{+} }\omega\left(y^{+}_{r},y^{-}_{s}\right) $ term in \eqref{eq:Zmain}. The exponent $\psi^{-}(I-A)^{-1}(\psi^{+})^{T}$  term in \eqref{eq:fmid} is given by
\begin{align}
\label{eq:terms}
& 
-\sum_{a,b\in\Ip}\alpha_{-}^{a}\alpha_{+}^{b}d_{a}(I-A)^{-1}\overline{d}_{b}
-\sum_{a\in\Ip}\sum_{t=1}^{n}\alpha_{-}^{a}\beta_{+}^{t}\int_{z_{0}}^{z_{t}}
d_{a}(I-A)^{-1}R(\cdot)
\\
\notag
&-\sum_{a\in\Ip}\sum_{t=1}^{n}\beta_{-}^{t}\alpha_{+}^{a}\int_{z_{0}}^{z_{t}}
L(\cdot)(I-A)^{-1}\overline{d}_{a}
-\sum_{t,u=1}^{n}\beta_{-}^{t}\beta_{+}^{u}\int_{x_{0}}^{z_{t}}L(\cdot)(I-A)^{-1}\int_{y_{0}}^{z_{u}}R(\cdot),
\end{align}
for arbitrary $x_{0},y_{0},z_{0}$. The $\alpha_{-}^{a}\alpha_{+}^{b}$ terms are combined with the 
$F(\bm{w,\rho})$ term to obtain the $\im\pi\bm{\alpha}.\Omega.\bm{\alpha}$ exponent in \eqref{eq:Zmain} as in the proof of Proposition~\ref{prop:ZMalpha}. 
Proposition~\ref{prop:nu} implies 
\begin{align*}
\int_{z_{0}}^{z_{t}}\nu_{a}
-\log\left(\frac{z_{t}-w_{a}}{z_{t}-w_{-a}}\right)
&=
-\int_{z_{0}}^{z_{t}}
d_{a}(I-A)^{-1}R(\cdot)
=
-\int_{z_{0}}^{z_{t}}
L(\cdot)(I-A)^{-1}\overline{d}_{a}.
\end{align*}
Thus the $\alpha^{a}\cdot\beta^{t}$ terms in \eqref{eq:Gterm} and  \eqref{eq:terms} combine to give the  $\displaystyle{\sum_{a, t}
\alpha^{a}\cdot \beta^{t}\int^{z_{t}}_{z_{0}}\hspace{-2 mm}\nu_{a}}$ term  in \eqref{eq:Zmain}. 

Let  $r(x,y):= \log \left(\frac{K(x,y)}{ x-y}\right)$ 
for $E(x,y)=K(x,y)dx^{-\half}dy^{-\half}$ of   \eqref{eq:prime} so that
$r(x,y)=r(y,x)$ and  $r(x,x)=0$. 
From Proposition~\ref{prop:omgsew} and since $\sum_{t=1}^{n}\beta^{t}= 0$ we find
\begin{align*}
-\sum_{t,u=1}^{n}\beta_{-}^{t}\beta_{+}^{u}\int_{x_{0}}^{z_{t}}L(\cdot)(I-A)^{-1}\int_{y_{0}}^{z_{u}}R(\cdot)
=&
\sum_{t,u=1}^{n}\beta_{-}^{t}\beta_{+}^{u}\int_{x_{0}}^{z_{t}}\int_{y_{0}}^{z_{u}}
\left(\omega(x,y)-\omega^{(0)}(x,y)\right)
\\
=&
\sum_{t,u=1}^{n}\beta_{-}^{t}\beta_{+}^{u} r(z_{t},z_{u})=\sum_{ t<u}\beta^{t}\cdot \beta^{u} r(z_{t},z_{u}).
\end{align*}
We  combine these $\beta^{t}\cdot \beta^{u} $ terms  in \eqref{eq:terms} with those  in $G(\bm{z,w,\rho})$ together with the differential $\bm{dz^{\half \beta^2}}$ to obtain the $E(z_{t},z_{u})^{\beta^{t}\cdot \beta^{u}}$ terms in \eqref{eq:Zmain}. Thus the theorem is proved.
\end{proof}
\begin{remark}
	$\F_{M^{2}_{\bm{\alpha}}}^{(g)}(\bm{h,y;e^{\beta},z })$ is the generating function for all correlation  functions on $\M^2$ for the Fock vectors \eqref{eq:Fockalpha} by repeated use of \eqref{eq:genrel}. Thus for the Virasoro vector, 
	\[
	\F_{M^{2}_{\bm{\alpha}}}^{(g)}(\omega,z)=\coeff_{x^{0}}\F_{M^{2}_{\bm{\alpha}}}^{(g)}(h_{+},x+z;h_{-},z)=\frac{1}{6}s(z)Z_{M^{2}_{\bm{\alpha}}}^{(g)},
	\]
	 for the projective connection 
	$s(x):=6 \lim_{y\rightarrow x}\left( \omega(x,y)-\frac{1}{(x-y)^2}dxdy\right)$.

\end{remark}

Choosing $\alpha^{a}=(\alpha^{a}_{1},0)$, $\beta^{t}=(\beta^{t}_{1},0)$ in Theorem~\ref{theor:Zmain} we obtain the generating function 
for all correlation  functions on $M_{\bm{\alpha}}=\otimes_{a\in\Ip}M_{\alpha^{a}}$ for $\alpha^{a}\in\C$ (on relabelling) as follows:
\begin{corollary}	\label{cor:Zmain}  
	For Heisenberg generators $h$ inserted at $y_{r}$ for $r\in \{1,\ldots,m\}$ and  $e^{\beta^{t}}$ for $\beta^{t}\in\C$ with $\sum_{t=1}^{n}\beta^{t}= 0$ inserted at $z_{t}$ for $t\in \{1,\ldots,n\}$ we have
	\begin{align}
	\label{eq:Zmaincor}
	\F_{M_{\bm{\alpha}}}^{(g)}(\bm{h,y;e^{\beta},z })=&
	\varepsilon_{\bm{\beta}} 
	\prod_{t<u} E(z_{t},z_{u})^{\beta^{t}\beta^{u}}
	\mathrm{Sym}_{m}\left(\omega ,\nu_{\bm{\alpha,\beta}}\right)	
	\frac{\exp\left(\im\pi\bm{\alpha}.\Omega.\bm{\alpha}+
		\displaystyle{\sum_{a, t}
		\alpha^{a} \beta^{t}\int^{z_{t}}_{z_{0}}\nu_{a}}
		\right)}{\det(I-A)^{\half}},
	\end{align}
for $a\in\Ip$; $t,u\in \{1,\ldots,n\}$ with $\bm{\alpha}.\Omega.\bm{\alpha}=\sum_{a,b\in\Ip}\alpha^{a}\Omega_{ab}\alpha^{b}$  and where  
	\begin{align*}
\nu_{\bm{\alpha,\beta}}:=\sum_{a\in\Ip}\alpha^{a}\nu_{a}+\sum_{t=1}^{n}\beta^{t}\omega_{z_{t}-z_{0}},
	\end{align*}
for arbitrary choice of $z_{0}$	and symmetric
	(tensor) product of $\nu_{\bm{\alpha,\beta}}$ and $\omega$ defined by
	\begin{align*}
	\mathrm{Sym}_m\big(\omega ,\nu_{\bm{\alpha,\beta}}\big):=
	\sum_{\varphi}\prod_{q}\nu_{\bm{\alpha,\beta}}(y_{q})\prod_{(r,s)}\omega (y_r,y_s),
	\end{align*}
	where we sum over all inequivalent  involutions $\varphi= (q)\ldots  (rs)\ldots$ of the labels $\{1,\ldots,m\}$. 
\end{corollary}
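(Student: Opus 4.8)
The plan is to derive the rank one formula by specialising Theorem~\ref{theor:Zmain} to $\alpha^a=(\alpha^a_1,0)$, $\beta^t=(\beta^t_1,0)$ and then reorganising the rank two combinatorial data into the single boson structure. First I would record the elementary consequences of the specialisation: since $\alpha^a_\pm=\frac{1}{\sqrt2}\alpha^a_1$ and $\beta^t_\pm=\frac{1}{\sqrt2}\beta^t_1$, the dot products become $\alpha^a\cdot\alpha^b=\alpha^a_1\alpha^b_1$, $\alpha^a\cdot\beta^t=\alpha^a_1\beta^t_1$ and $\beta^t\cdot\beta^u=\beta^t_1\beta^u_1$, so after relabelling $\alpha^a_1\mapsto\alpha^a$ and $\beta^t_1\mapsto\beta^t$ the cocycle $\varepsilon_{\bm\beta}$, the prime form product $\prod_{t<u}E(z_t,z_u)^{\beta^t\beta^u}$ and the exponential $\exp\bigl(\im\pi\bm{\alpha}.\Omega.\bm{\alpha}+\sum_{a,t}\alpha^a\beta^t\int_{z_0}^{z_t}\nu_a\bigr)$ in \eqref{eq:Zmain} pass verbatim to \eqref{eq:Zmaincor}. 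In the same step I would observe that, because $\alpha_+=\alpha_-$ and $\beta_+=\beta_-$, the two vectors in \eqref{eq:thetatilde} coincide as a single linear form, $\widetilde\theta^+=\widetilde\theta^-=\frac{1}{\sqrt2}\,\nu_{\bm{\alpha,\beta}}$.

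The substantive step is to pass from the rank two output to a genuine rank one statement, since the left hand side of \eqref{eq:Zmaincor} carries $m$ insertions of the single generator $h$, whereas Theorem~\ref{theor:Zmain} carries $m$ copies of $h_+$ together with $m$ copies of $h_-$. Here I would invoke the tensor decomposition $M^2=M\otimes M$, under which $M^2_{(\alpha,0)}=M_\alpha\otimes M$, and identify the rank one field with $h=h_1=\frac{1}{\sqrt2}(h_++h_-)$ sitting in the first factor with $e^{(\beta,0)}=e^\beta\otimes\vac$. Since the second factor then carries no insertion and zero charge it contributes only the scalar $Z_M^{(g)}=\det(I-A)^{-\half}$ of \eqref{eq:ZM1g}; dividing this decoupled factor out is precisely what converts the $\det(I-A)^{-1}$ of \eqref{eq:Zmain} into the $\det(I-A)^{-\half}$ of \eqref{eq:Zmaincor}.

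With $h_1=\frac{1}{\sqrt2}(h_++h_-)$ inserted at each $y_r$, I would then expand and apply Wick contraction inside the $h_\pm$ generating function of the theorem. Because $\omega$ is symmetric and $\widetilde\theta^+=\widetilde\theta^-$, a two element orbit $(rs)$ of an involution receives the two surviving contributions $\frac12\bigl(\omega(y_r,y_s)+\omega(y_s,y_r)\bigr)=\omega(y_r,y_s)$ (the $++$ and $--$ contractions vanishing), while a fixed point $q$ yields the background contraction $\frac{1}{\sqrt2}\bigl(\widetilde\theta^+_q+\widetilde\theta^-_q\bigr)=\nu_{\bm{\alpha,\beta}}(y_q)$. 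In this way the partial permanent of \eqref{eq:Zmain}, a sum over partial injections between the $+$ and $-$ index sets, reorganises into the sum over involutions $\mathrm{Sym}_m(\omega,\nu_{\bm{\alpha,\beta}})$; note that no singular diagonal term $\omega(y_r,y_r)$ can arise, since each $h_1(y_r)$ is a single insertion that may only contract with a distinct $h_1(y_s)$.

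I expect the main obstacle to be exactly this last reorganisation: one must keep the $\frac{1}{\sqrt2}$ normalisations, the symmetry $\omega(x,y)=\omega(y,x)$ and the coincidence $\widetilde\theta^+=\widetilde\theta^-$ aligned so that the $2m$-insertion partial permanent collapses cleanly onto the $m$-element involution sum, and to verify that the half integer power of $\det(I-A)$ produced by the decoupled boson is consistent with the scalar factors already transported in the first step. Once these bookkeeping points are checked the identity \eqref{eq:Zmaincor} follows, so that $\F_{M_{\bm\alpha}}^{(g)}$ is the claimed generating function for all correlation functions on $M_{\bm{\alpha}}=\otimes_{a\in\Ip}M_{\alpha^a}$.
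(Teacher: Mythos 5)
Your proposal is correct and takes essentially the same route as the paper, which obtains Corollary~\ref{cor:Zmain} precisely by specializing Theorem~\ref{theor:Zmain} to $\alpha^{a}=(\alpha^{a}_{1},0)$, $\beta^{t}=(\beta^{t}_{1},0)$ and relabelling; your decoupled second boson giving $Z_{M}^{(g)}=\det(I-A)^{-\half}$ via \eqref{eq:ZM1g}, and the $\frac{1}{\sqrt{2}}$-bookkeeping (with $\omega(x,y)=\omega(y,x)$ and $\widetilde{\theta}^{+}=\widetilde{\theta}^{-}=\frac{1}{\sqrt{2}}\nu_{\bm{\alpha,\beta}}$) that collapses the partial permanent onto $\mathrm{Sym}_{m}\left(\omega,\nu_{\bm{\alpha,\beta}}\right)$, correctly supply the details the paper leaves implicit. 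The one step worth stating explicitly is that expanding $h_{1}=\frac{1}{\sqrt{2}}(h_{+}+h_{-})$ requires mixed correlators with unequal numbers of $h_{+}$ and $h_{-}$ insertions, which Theorem~\ref{theor:Zmain} asserts only for equal numbers, but its proof (and the partial permanent of a rectangular matrix, being a sum over partial injections) applies verbatim, so this is an immediate extension rather than a gap.
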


\begin{example}
	Consider the lattice (Super)VOA $V_{L}$ for a  rank $d$ even (odd) integral lattice $L$.
	Then $V_{L}=\oplus_{\lambda\in L}M^{d}_{\lambda}$ where 
	$ M^{d}_{\lambda}=\otimes_{i=1}^{d}M_{\lambda_{i}}$ and $\lambda=(\lambda_{1},\ldots ,\lambda_{d})\in L$ so that
	\begin{align*}
	Z_{V_{L}}^{(g)}=\frac{\Theta_{L}(\Omega)}{\det(I-A)^{d/2}},
	\end{align*}
	for genus $g$ Siegel lattice theta function  $\Theta_{L}(\Omega)=\sum_{\bm{\lambda}\in \otimes^{g}L}e^{\im\pi\,\bm{\lambda}.\Omega.\bm{\lambda}}$.
	For even $L$, this agrees with Proposition~6.9 of  \cite{TW}  found  by the alternative technique of genus $g$ Zhu recursion.
\end{example}
\begin{example}
For  $\alpha^{a}=\beta^{t}=0$  the correlation  function $\F_{M}^{(g)}(\bm{h,y})$ agrees with Proposition~6.1 of  \cite{TW} found  from  genus $g$ Zhu recursion. 
\end{example}
\begin{example}
Consider the correlation function for $e^{+1}$ inserted at $x_{i}$ and $e^{-1}$ inserted at $y_{j}$ for $i,j\in \{1,\ldots,n\}$ on $M_{\bm{m+\alpha}}=\otimes_{a\in\Ip}M_{m^{a}+\alpha^{a}}$ for
$\bm{m}\in\Z^{g}$  and  $\bm{\alpha}\in\R^{g}$. Then 
	\begin{align*}
\F_{M_{\bm{m+\alpha}}}^{(g)}(\bm{e^{+1},x ;e^{-1},y })=&
\frac{\prod_{i<j} E(x_{i},x_{j})E(y_{i},y_{j})}{\prod_{i,j} E(x_{i},y_{j})}
\frac{
e^{\im\pi(\bm{m+\alpha}).\Omega.(\bm{m+\alpha})+(\bm{m+\alpha}).\bm{\zeta}}
}
{\det(I-A)^{\half}},
\end{align*}
where $\zeta_{a}=\displaystyle{\sum_{i=1}^{n}\int_{y_{i}}^{x_{i}}\nu_{a}}$. Summing over all $\bm{m}\in\Z^{g}$ we find
\begin{align}
\label{eq:Zfermion}
\sum_{\bm{m}\in\Z^{g}}\F_{M_{\bm{m+\alpha}}}^{(g)}(\bm{e^{+1},x ;e^{-1},y })
=
\frac{\prod_{i<j} E(x_{i},x_{j})E(y_{i},y_{j})}{\prod_{i,j} E(x_{i},y_{j})}
\frac{\Theta[\bm{\alpha}](\Omega,\bm{\zeta})}
{\det(I-A)^{\half}},
\end{align}
for Riemann theta function $\Theta[\bm{\alpha}](\Omega,\bm{\zeta}):=	\sum_{\bm{m}\in\Z^{g}}
e^{\im\pi(\bm{m+\alpha}).\Omega.(\bm{m+\alpha})+(\bm{m+\alpha}).\bm{\zeta}}$. 
\eqref{eq:Zfermion} is the correlation generating function for twisted sectors of the rank 2 fermionic SVOA e.g. \cite{R}. 
\end{example}

\section{Appendix}\label{sect:append}
We  describe some  elementary properties of $\om{0}{}(x,y)$ for $x,y\in \Chat$. 
For $|x|>|y|$ we find
\begin{align}
\om{0}{}(x,y)=-b(x) c (y),
\label{eq:om0bc}
\end{align}
where $b(x)$, $ c (y)$ are infinite row and column vectors with components given by the 1-forms
\begin{align*}
b_{l}(x)
:= &
\frac{1}{\tpi\sqrt{l}}
\oint\limits_{\Con{0}(y)}
y^{-l}\om{0}{}(x,y)
=\sqrt{l}x^{-l-1}\,dx,\quad l=1,2,\ldots,
\notag\\
 c _{k}(y)
:= &
-\frac{1}{\tpi\sqrt{k}}
\oint\limits_{\Con{\infty}(x)}
x^{k}\om{0}{}(x,y)
=-\sqrt{k}y^{k-1}\,dy,\quad k=1,2,\ldots,
\end{align*}
for Jordan curves $\Con{0} $  in the neighborhood of $0$ and $\Con{\infty}  $ 
 in the neighborhood of $\infty$.
Note that $\Con{\infty}(x)\sim -\Con{0}(x^{-1})$ implies 
$b_k(x)= c _{k}(x^{-1})$.

For  $\gamma\in \SL_{2}(\C)$ define a 
matrix  $D(\gamma)$ with components for $k,l\ge 1$ given by  \cite{DV}
\begin{align}
D_{kl}(\gamma)
:= &
\frac{1}{(\tpi)^2\sqrt{kl}}
\oint\limits_{\Con{\infty}(x)}
\oint\limits_{\Con{0}(y)}
x^{k}y^{-l}\om{0}{}(x,\gamma y)=
\begin{cases}
\sqrt{\frac{l}{k}}\partial_{y}^{(l)}(\gamma y)^k
\Big|_{y=0}, & \gamma(0)\neq \infty,
\\
0, & \gamma(0)=\infty,
\end{cases}
\label{eq:Dkl}
\end{align}
where $\partial_{y}^{(l)}:=\frac{1}{l!}\frac{\partial^{l}}{\partial y^{l} }$.
\begin{lemma} 
	\label{lem:Drep}
Let $\gamma_1,\gamma_2\in \SL_{2}(\C)$ such that $\gamma_{1}(0), \gamma_{2}(0), \gamma_{1}\gamma_{2}(0)\neq \infty$. Then
\begin{enumerate}
	\item[(i)]$b(x)D(\gamma_1)= b(\gamma_1^{-1} x)$,
	\item[(ii)]$D(\gamma_1) c (y)=  c (\gamma_1 y)$,
	\item[(iii)]$D(\gamma_1)D(\gamma_2)= D(\gamma_1\gamma_2)$.
\end{enumerate}
\end{lemma}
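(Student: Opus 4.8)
The plan is to establish (ii) first by a direct contour computation, and then to deduce (i) and (iii) from (ii) together with the M\"obius invariance $\om{0}{}(\gamma x,\gamma y)=\om{0}{}(x,y)$ (elementary for the genus-zero form of \eqref{eq:omK0}) and the non-degeneracy of the expansion \eqref{eq:om0bc}.

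For (ii) I would expand the $k$-th component of $D(\gamma_1)c(y)$ using the integral definition \eqref{eq:Dkl} of $D_{kl}(\gamma_1)$ together with $c_l(y)=-\sqrt{l}\,y^{l-1}dy$, and then sum the geometric series $\sum_{l\ge1}(y')^{-l}y^{l-1}=(y'-y)^{-1}$, which converges for $y$ strictly inside the small contour $\Con{0}(y')$. This collapses the inner $y'$-integral into the Cauchy integral $\frac{1}{\tpi}\oint_{\Con{0}(y')}(y'-y)^{-1}\om{0}{}(x',\gamma_1 y')$, which reproduces the form $\om{0}{}(x',\gamma_1 y)$ provided the integrand has no further pole inside $\Con{0}(y')$; this is exactly where the hypothesis $\gamma_1(0)\neq\infty$ enters, since it keeps $\gamma_1 y'$ finite and bounded away from the large variable $x'\in\Con{\infty}$. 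What survives is $-\frac{1}{\tpi\sqrt{k}}\oint_{\Con{\infty}(x')}(x')^k\om{0}{}(x',\gamma_1 y)$, which is precisely $c_k(\gamma_1 y)$ by definition, giving $D(\gamma_1)c(y)=c(\gamma_1 y)$.

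For (i) I would evaluate the scalar $b(x)D(\gamma_1)c(y)$ in two ways. By (ii) it equals $b(x)c(\gamma_1 y)=-\om{0}{}(x,\gamma_1 y)$ via \eqref{eq:om0bc}, and by invariance this is $-\om{0}{}(\gamma_1^{-1}x,y)=b(\gamma_1^{-1}x)c(y)$. Hence $\big(b(x)D(\gamma_1)-b(\gamma_1^{-1}x)\big)c(y)=0$ for all admissible $y$, and since the components $c_l(y)=-\sqrt{l}\,y^{l-1}dy$ are linearly independent monomials in $y$, each entry of the row vector must vanish, proving $b(x)D(\gamma_1)=b(\gamma_1^{-1}x)$. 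Finally, for (iii) I would compute $b(x)D(\gamma_1)D(\gamma_2)c(y)$: by (i) the left pairing collapses to $b(\gamma_1^{-1}x)$ and by (ii) the right to $c(\gamma_2 y)$, so the scalar equals $-\om{0}{}(\gamma_1^{-1}x,\gamma_2 y)=-\om{0}{}(x,\gamma_1\gamma_2 y)$ by invariance; the same quantity equals $b(x)D(\gamma_1\gamma_2)c(y)$ by (ii) applied to $\gamma_1\gamma_2$. Here the three hypotheses $\gamma_1(0),\gamma_2(0),(\gamma_1\gamma_2)(0)\neq\infty$ guarantee that all three matrices are given by \eqref{eq:Dkl} and that every reproducing step is legitimate. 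Thus $b(x)\big(D(\gamma_1)D(\gamma_2)-D(\gamma_1\gamma_2)\big)c(y)=0$ for all $x,y$, and the non-degeneracy of the double expansion \eqref{eq:om0bc} in powers of $x^{-1}$ and $y$ forces $D(\gamma_1)D(\gamma_2)=D(\gamma_1\gamma_2)$.

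I expect the main obstacle to be the rigorous bookkeeping in (ii): justifying the interchange of the infinite sum with the contour integrals (which requires $y$ strictly inside $\Con{0}(y')$ so the geometric series converges on the contour), and confirming that the only singularity captured by the Cauchy step is $y'=y$, with no stray contribution from $\gamma_1 y'=\infty$ inside the contour. This last point is controlled precisely by the standing assumption $\gamma(0)\neq\infty$. Once (ii) is secured, (i) and (iii) follow formally from M\"obius invariance and non-degeneracy.
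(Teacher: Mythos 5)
Your proposal is correct and is essentially the paper's own proof: both arguments rest on the M\"obius invariance $\om{0}{}(x,\gamma_1 y)=\om{0}{}(\gamma_1^{-1}x,y)$ combined with the expansion \eqref{eq:om0bc}, and your geometric-series/Cauchy collapse in (ii) is precisely the paper's ``taking moments'' of that identity, with the hypothesis $\gamma(0)\neq\infty$ entering at the same point (keeping the pole $y'=\gamma_1^{-1}x'$ outside $\Con{0}$). The only difference is organizational: the paper obtains (i) directly by $y$-moments and (iii) by $y$-moments of (ii), whereas you derive (i) and (iii) by sandwiching between $b(x)$ and $c(y)$ and invoking linear independence of the monomial components, which is the same coefficient extraction in slightly different clothing.
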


\begin{proof} $\SL_{2}(\C)$ invariance of $\om{0}{}(x,y)$ implies
	\[
	\om{0}{}(\gamma_{1}^{-1}x, y)
	=
	\om{0}{}(x,\gamma_{1} y)
	=-b(x)  c (\gamma_{1} y),
	\] 
	for $|x|>|\gamma_{1} y|$ from  \eqref{eq:om0bc}. 
	Provided $\gamma_{1}(0)\neq \infty$ we may compute $y$ moments of both sides of this equation to obtain  (i). A similar argument leads to (ii).  
(iii) follows by considering $y$ moments of (ii). 
\end{proof}
\begin{remark}
	Note that (iii) does \textbf{not} imply that $D$  is a representation of $\SL_{2}(\C)$ e.g. for $\gamma=\bigl(\begin{smallmatrix}
	0 & 1\\ 1 & 0
	\end{smallmatrix} \bigr)$ then 
	$I=D(\gamma^2)\neq D(\gamma)^2=0$. 
\end{remark}

We may now readily express the 1-forms $L(x)$, $R(x)$ and the moment matrix $A$   of \eqref{eq:Lkdef}, \eqref{eq:Adef}. Define $\lambda_{a},\mu_{a}\in \SL_{2}(\C)$ for $a\in \I$ by
\begin{align}
\lambda_{a}:= 
\begin{pmatrix}
\rho_{a}^{-1/2} & 0\\
0 & 1
\end{pmatrix}
\begin{pmatrix}
1 & -w_{a}\\
0 & 1
\end{pmatrix},
\quad
\mu_{a}:= 
\begin{pmatrix}
\rho_{a}^{1/2} & 0\\
0 & 1
\end{pmatrix}
\begin{pmatrix}
0 & 1\\
1 & -w_{-a}
\end{pmatrix}.
\label{eq:lammu}
\end{align}
The sewing condition \eqref{eq:sew} reads $\lambda_{a}z'=1/(\lambda_{-a}z)=\mu_{a}z$ i.e. the Schottky generators obey
\begin{align}
\gamma_{a}=\lambda_{a}^{-1}\mu_{a}. 
\label{eq:gammaSchot}
\end{align}
It is easy to check that the 1-forms $L(x)$, $R(y)$ and the moment matrix $A$  are given by
\begin{align}
L_{a}(k,x)=b_{k}(\lambda_{a}x),
\quad 
R_{a}(l,y)=  c _{l}(\mu_{a}y), 
\quad
A_{ab}(k,l)= D_{kl}(\mu_{a}\lambda_{b}^{-1}).
\label{eq:aA}
\end{align}
Lemma  \ref{lem:Drep}, \eqref{eq:om0bc} and \eqref{eq:gammaSchot} imply that for integer $n\ge 1$
\begin{align*}
L(x)A^{n-1}R(y)= &
\sum_{a_{1},\ldots,a_{n}}
b(\lambda_{a_{1}}x)D(\mu_{a_{1}}\lambda_{a_{2}}^{-1})\ldots D(\mu_{a_{n-1}}\lambda_{a_{n}}^{-1}) c (\mu_{a_{n}}y)
\notag
\\
= &
\sum_{a_{1},\ldots,a_{n}}
b(x) c (\gamma_{a_{1}}\ldots \gamma_{a_{n}}y)
= -\sum_{\gamma \in \Gamma _{n}}\om{0}{}(x,\gamma y),
\end{align*}
where $\Gamma_{n}$ denotes the elements of the Schottky group consisting of reduced words of length $n$ i.e. for all adjacent generators $\gamma_{-a_{i}}\equiv\gamma_{a_{i}}^{-1}\neq \gamma_{a_{i+1}}$ for $i=1,\ldots,n-1$.
Hence Proposition~\ref{prop:omgsew} implies the classical  formula e.g. \cite{Bo}
	\begin{align}
	\omega (x,y)=\sum_{\gamma \in \Gamma }\om{0}{}(x,\gamma y),
	\label{eq:omgSch}
	\end{align}
	where the sum is taken over all the elements of the Schottky group $\Gamma $.

\end{document}